\numberwithin{equation}{section}
\newcommand{\bc}{\begin{center}}
\newcommand{\ec}{\end{center}}
\newcommand{\be}{\begin{equation}}
\newcommand{\ee}{\end{equation}}
\newcommand{\beqn}{\begin{eqnarray*}}
\newcommand{\eeqn}{\end{eqnarray*}}
\newcommand{\gl}{\lambda}
\newcommand{\x}{\times}
\newcommand{\daha}{\ddot H}
\def\SH{\mathrm {S} \daha}
\newcommand{\e}{{\mathbf{e} }}
\def\E{\mathcal{E}}
\def\gl{\mathfrak{gl}}
\newtheorem{theorem}{Theorem}[section]
\newtheorem*{theorem*}{Theorem}
\newtheorem{proposition}[theorem]{Proposition}
\newtheorem{corollary}[theorem]{Corollary}
\newtheorem{lemma}[theorem]{Lemma}
\theoremstyle{definition}
\newtheorem{definition}[theorem]{Definition}
\newtheorem{remark}[theorem]{Remark}
\newtheorem{conjecture}[theorem]{Conjecture}
\newcommand{\xx}{{\mathbf{x}}}
\newcommand{\yy}{{\mathbf{y}}}
\newcommand{\zz}{{\mathbf{z}}}
\newcommand{\cc}{{\mathbf{c}}}
\newcommand{\im}{{\mathrm{im}}}
\def\Z{\mathbb{Z}}
\def\CC{\mathcal{C}}
\newcommand{\pic}[2]{\raisebox{-.5\height}{\includegraphics[scale=#2]{#1}}}
\newcommand\Xor{\pic{xor}{.50}}
\newcommand\Yor{\pic{yor} {.50}}
\newcommand\Ior{\pic{ior} {.50}}
\newcommand\Torusbase{\pic{Torusbase} {1}}
\newcommand\Torusbasedisc{\pic{Torusbasedisc} {1}}
\newcommand\xiibraid{\pic{xiibraid}{.7}}
\newcommand\xiibraidblue{\pic{xiibraidblue}{.6}}
\newcommand\xii{\pic{xii}{1}}
\newcommand\etai{\pic{etai}{1}}
\newcommand\etaibraid{\pic{etaibraid}{.7}}
\newcommand\etaibraidblue{\pic{etaibraidblue}{.6}}
\newcommand\etatwoxione{\pic{etatwoxione}{1}}
\newcommand\xioneetatwo{\pic{xioneetatwo}{1}}
\newcommand\xioneetaone{\pic{xioneetaone}{1}}
\newcommand\xioneetaonedivert{\pic{xioneetaonedivert}{1}}
\newcommand\etaonexionedivert{\pic{etaonexionedivert}{1}}
\newcommand\xioneetaonecommutator{\pic{xioneetaonecommutator}{1}}
\newcommand\xioneetaonecommutatorbraid{\pic{xioneetaonecommutatorbraid}{1}}
\newcommand\etaonexitwo{\pic{etaonexitwo}{1}}
\newcommand\xitwoalphatwo{\pic{xitwoalphatwo}{1}}
\newcommand\alphatwo{\pic{alphatwo}{1}}
\newcommand\alphan{\pic{alphan}{1}}
\newcommand\etaonexitwon{\pic{etaonexitwon}{1}}
\newcommand\etaonexitwondivert{\pic{etaonexitwondivert}{1}}
\newcommand\betanbraid{\pic{betanbraid}{.7}}
\newcommand\sigmaiplan{\pic{sigmaiplan}{1.2}}
 \newcommand\backidblue{\pic{backidblue}{.7}}
 \newcommand\frontidblue{\pic{frontidblue}{.7}}
 \newcommand\backidleftblue{\pic{backidleftblue}{.7}}
 \newcommand\frontidleftblue{\pic{frontidleftblue}{.7}}
 \newcommand\braidplanover{\pic{braidplanover}{1.2}}
 \newcommand\braidplanunder{\pic{braidplanunder}{1.2}}
  \newcommand\uxfront{\pic{uxfront}{1}}
 \newcommand\uxback{\pic{uxback}{1}}
 \newcommand\uyfront{\pic{uyfront}{1}}
  \newcommand\uybackdown{\pic{uybackdown}{1}}
 \newcommand\uxfrontleft{\pic{uxfrontleft}{1}}
 \newcommand\uxbackleft{\pic{uxbackleft}{1}}
 \newcommand\uyfrontdown{\pic{uyfrontdown}{1}}
  \newcommand\uyback{\pic{uyback}{1}}
 \newcommand\frontid{\pic{frontid}{.5}}
 \newcommand\backid{\pic{backid}{.5}}
\newcommand\commutatoronebraid{\pic{commutatoronebraid}{.7}}
\newcommand\Pbraid{\pic{Pbraid}{.5}}
\newcommand\Pplan{\pic{Pplan}{.7}}
\newcommand\basecross{\pic{basecross}{.5}}
\newcommand\baseidentity{\pic{baseidentity}{.5}}
\newcommand\switchcurve{\pic{switchcurve}{.4}}
\newcommand\standardswitchcurve{\pic{switchcurve}{.5}}
\newcommand\switch{\pic{switch}{.4}}
\newcommand\smooth{\pic{smooth}{.4}}
\newcommand\switchconfig{\pic{switchconfig}{.4}}
\newcommand\identityconfig{\pic{identityconfig}{.4}}
\newcommand\compositeconfig{\pic{compositeconfig}{.4}}
\newcommand\switchplan{\pic{switchplan}{.45}}
\newcommand\switcheddiagram{\pic{switcheddiagram}{.45}}
\newcommand\embeddeddiagram{\pic{embeddeddiagram}{.4}}
\newcommand\Xorband{\pic{xorband}{.50}}
\newcommand\Yorband{\pic{yorband} {.50}}
\newcommand\Iorband{\pic{iorband} {.50}}
\def\Idorband{\pic{idorband} {.60}}
\def\Idorrightband{\pic{idorrightband} {.60}}
\def\Idorleftband{\pic{idorleftband} {.60}}
\def\rcurlorband{\pic{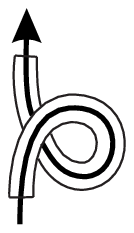} {.60}}
\def\lcurlorband{\pic{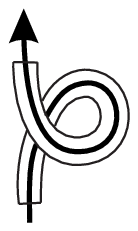} {.60}}
\newcommand {\sk}{\mathrm{Sk}}
\newcommand {\bsk}{\mathrm{BSk}}
\newcommand{\Q}{\mathbb{Q}}
\newcommand{\SL}{\mathrm{SL}}
\newcommand{\ZZ}{\mathbf{Z}}
\newcommand{\oursetminus}{-}
\begin{document}

\title{DAHAs and Skein theory
}

\author{H. R. Morton}
\email{su14@liverpool.ac.uk }
\address{Department of Mathematics, University of Liverpool, UK}
\author{Peter Samuelson}
\email{psamuels@ucr.edu}
\address{Department of Mathematics, University of California, Riverside, USA}
\maketitle

\begin{abstract} 
We give a skein-theoretic realization of the $\mathfrak{gl}_n$ double affine Hecke algebra of Cherednik using braids and tangles in the punctured torus. We use this to provide evidence of a relationship we conjecture between the skein algebra of closed links in the punctured torus and the elliptic Hall algebra of Burban and Schiffmann.
\end{abstract}

\tableofcontents

\section{Introduction}
This paper concerns a relation between the double affine Hecke algebras of Cherednik and certain algebras associated to the punctured and unpunctured torus that are defined using skein theory. As partial motivation, we first briefly discuss previous results in the $\mathfrak{sl}_2$ case of the Kauffman bracket, and then go on to discuss the conjectures and results of the present paper. We note that double affine Hecke algebras and skein algebras are related to several aspects of mathematical physics, including refined Chern-Simons theory \cite{AS15}; however, the exposition of the present paper will be purely mathematical.
\subsection{The Kauffman bracket}
The \emph{Kauffman bracket skein algebra} $K_s(F)$ of a surface $F$ is spanned by embedded links in the thickening $F \times [0,1]$, modulo the Kauffman bracket skein relations. These  are local relations depending on a parameter $s \in \mathbb C^\times$, and which are similar to equation \eqref{eq:skeinrel}. The product is given by stacking links in the $[0,1]$ direction, and this algebra can be viewed as a quantization of the ring of functions on the $\SL_2$ character variety of $F$ \cite{PS00, BFK99}. For the torus and punctured torus these algebras have been described explicitly by Frohman, Gelca, and by Bullock, Przytycki, respectively.

The \emph{double affine Hecke algebra} was defined by Cherednik (see, e.g.\ \cite{Che05} and references therein) using explicit generators and relations, and it depends on two parameters, $q,t \in \mathbb{C}^\times$.  In rank 1, its \emph{spherical subalgebra}
$SH_{q,t}$ 
of the DAHA was described explicitly by Koornwinder and later by Terwilliger. 

Combining these explicit descriptions leads to the following theorem.
\begin{theorem}[{\cite{FG00, Ter13, Koo08}}]\label{thm:ktorus}
	There is an isomorphism 
	\[
	K_s(T^2) \cong SH_{s,s}
	\] 
	between the Kauffman bracket skein algebra of the torus and the $t=q=s$ specialization\footnote{Technically, Frohman and Gelca showed skein algebra is isomorphic to the $t_{DAHA}=1$, $q_{DAHA}=s_{skein}$ specialization, but the presentations of Koornwinder and Terwilliger show that this spherical subalgebra is isomorphic to the spherical subalgebra in the $t_{DAHA} = q_{DAHA} = s_{skein}$ specialization, which is a nontrivial statement.} of the rank 1 spherical DAHA.
\end{theorem}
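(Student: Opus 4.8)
The plan is to prove the isomorphism by comparing explicit generators-and-relations presentations of the two sides, since each of the three cited works supplies such a presentation.

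First I would recall the Frohman--Gelca description of $K_s(T^2)$. It is spanned by the isotopy classes $(p,q)_T$ of essential $(p,q)$-curves, and multiplication obeys the product-to-sum formula
\[
(p,q)_T\,(r,u)_T \;=\; s^{\,pu-qr}\,(p+r,q+u)_T \;+\; s^{-(pu-qr)}\,(p-r,q-u)_T .
\]
Setting $x=(1,0)_T$, $y=(0,1)_T$, $z=(1,1)_T$, the formula shows these three classes generate $K_s(T^2)$ and, since the relevant slope pairs have determinant $\pm 1$, that they satisfy the $\Z/3$-symmetric quadratic relation
\[
s\,xy - s^{-1}yx = (s^2-s^{-2})\,z
\]
together with its two cyclic images $(x\to y\to z\to x)$. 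To turn this into a presentation I would verify, using the product-to-sum formula as a reduction system, that every $(p,q)_T$ is a polynomial in $x,y,z$, and that the only further relation needed is a single cubic one fixing the central Casimir element to an explicit scalar depending on $s$.

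The resulting presentation is exactly that of the rank $1$ Askey--Wilson algebra with deformation parameter $q=s$. I would then invoke the Koornwinder and Terwilliger presentations of the spherical DAHA $SH_{q,t}$, which exhibit it as such an Askey--Wilson algebra with Askey--Wilson parameters determined by $(q,t)$ and a prescribed Casimir value. The map on generators is forced by sending $x,y,z$ to the three Askey--Wilson generators; the substantive check is that under $t=q=s$ the structure constants of the Askey--Wilson relations reduce to the coefficients $s^{\pm 1}$ and $s^2-s^{-2}$ found above and that the two Casimir values coincide. This is a finite, if bookkeeping-heavy, comparison.

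The main obstacle is the parameter reconciliation flagged in the footnote. Frohman and Gelca most naturally realize $K_s(T^2)$ as the invariant subalgebra of a quantum torus, which is the $(q,t)=(s,1)$ spherical DAHA (at $t=1$ the Hecke relation degenerates to $T^2=1$, so $e\ddot H_{s,1}e$ is the $W$-invariants of the quantum torus), whereas the target is $(q,t)=(s,s)$. I therefore need the genuinely nontrivial algebraic fact that these two specializations of the rank $1$ spherical DAHA are isomorphic. I expect to read this off the Koornwinder/Terwilliger presentations: the abstract algebra $SH_{q,t}$ depends on $(q,t)$ only through the Askey--Wilson coefficients and the Casimir, and one verifies that the data at $(s,1)$ and at $(s,s)$ are interchanged by an explicit rescaling of generators compatible with the Casimir on both sides. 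Producing this rescaling, and confirming compatibility, is the crux, as it is a statement about the DAHA alone with no skein-theoretic content.
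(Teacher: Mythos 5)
Your proposal is correct and follows essentially the same route as the paper, which offers no independent proof but simply combines the cited explicit descriptions: the Frohman--Gelca product-to-sum formula (equivalently the Bullock--Przytycki-type presentation by $x,y,z$ with the cyclic quadratic relations and the Casimir cubic), the Koornwinder/Terwilliger Askey--Wilson presentations of the rank $1$ spherical DAHA, and a generator-by-generator matching. In particular, the step you isolate as the crux --- that Frohman--Gelca naturally land in the $(q,t)=(s,1)$ specialization (the $\mathbb{Z}/2$-invariants of the quantum torus) and that one then needs the purely algebraic, nontrivial isomorphism with the $(q,t)=(s,s)$ spherical subalgebra extracted from the Koornwinder/Terwilliger presentations --- is exactly the subtlety the paper flags in its footnote.
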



Combining the same algebraic theorems with the description of the skein algebra of the punctured torus instead, we obtain the following.
\begin{theorem}[{\cite{BP00, Ter13, Koo08}}]\label{thm:kptorus}
	There is a surjective map 
	\[
	K_s(T^2 \oursetminus D^2) \twoheadrightarrow SH_{q=s,t}
	\] 
	from the skein algebra of the punctured torus to the spherical rank 1 DAHA.
\end{theorem}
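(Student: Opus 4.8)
The plan is to prove Theorem~\ref{thm:kptorus} by the same strategy used for Theorem~\ref{thm:ktorus}: compare explicit presentations of the two algebras and match generators and relations, now taking care to track the curve parallel to the puncture. On the skein side I would use the Bullock--Przytycki presentation of $K_s(T^2 \oursetminus D^2)$, and on the algebraic side the description of $SH_{q,t}$ as a central specialization of Terwilliger's universal Askey--Wilson algebra, together with Koornwinder's identification of the spherical rank~$1$ DAHA.

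First I would recall that $K_s(T^2 \oursetminus D^2)$ is generated by the three simple closed curves $x,y,z$ representing the $(1,0)$, $(0,1)$ and $(1,1)$ classes, subject to the three cyclic $s$-commutator relations
\[
s\,xy - s^{-1}\,yx = (s^2 - s^{-2})\,z,
\]
together with the fact that the boundary-parallel curve $\partial$ is central and equals a specific polynomial in $x,y,z$. Next I would recall Terwilliger's universal Askey--Wilson algebra $\Delta_q$, generated by $A,B,C$ with the property that the three cyclic combinations $A + (qBC - q^{-1}CB)/(q^2-q^{-2})$ are central, and Koornwinder's result that setting these central elements equal to the scalars prescribed by $q,t$ recovers $SH_{q,t}$. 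I would then define the map $K_s(T^2 \oursetminus D^2) \to SH_{s,t}$ on generators by $x \mapsto A$, $y\mapsto B$, $z\mapsto C$ (up to normalization), check that the cyclic $s$-commutator relations above are carried to the defining relations of $\Delta_q$ under the identification $q=s$, and conclude that the map is a well-defined algebra homomorphism. Surjectivity is then immediate, since $A,B,C$ generate $SH_{s,t}$. The role of the second parameter $t$ is precisely that the central boundary curve $\partial$ must be sent to the scalar function of $t$ appearing in Koornwinder's central specialization; thus the map factors through the quotient that fixes $\partial$ to this scalar, which is why it is only a surjection and not an isomorphism, and why $t$ is free even though $q=s$.

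The main obstacle I anticipate is the precise parameter and normalization matching. Concretely, I would need to verify that the central boundary polynomial in the Bullock--Przytycki presentation is carried to the Casimir element of the Askey--Wilson algebra, and to pin down the exact function of $t$ to which $\partial$ must be sent; the sign and scaling conventions relating the Kauffman bracket skein relation to Terwilliger's relations (and the identification $q=s$ of the deformation parameters) require careful bookkeeping. A useful sanity check is that in the unpunctured case of Theorem~\ref{thm:ktorus} the analogous boundary constraint collapses to force $t=q=s$, whereas here the puncture supplies the extra central curve that frees $t$ from $q$.
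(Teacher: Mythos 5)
Your proposal is correct and follows essentially the same route as the paper, which obtains this theorem precisely by combining the Bullock--Przytycki presentation of $K_s(T^2 \oursetminus D^2)$ (the three cyclic $s$-commutator relations, with the boundary curve appearing as a central Casimir polynomial) with the Terwilliger--Koornwinder presentation of the rank-1 spherical DAHA, and matching generators and relations by hand. The one bookkeeping point you flag is indeed the crux: in this rank-1 case the three central elements of the universal Askey--Wilson algebra are set to zero, so the generator-for-generator map needs no constant corrections, and the parameter $t$ enters only through the scalar value assigned to the central boundary curve, which is exactly why the map is a surjection with kernel generated by $\partial$ minus that scalar.
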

We note that the source algebra still only depends on one parameter -- the second parameter $t$ in the target appears in the relations describing the kernel of the map. One rough way of thinking of these results is that the spherical DAHA can be obtained from the skein algebra of the punctured torus using some kind of ``decoration'' at the puncture.

Let us also comment briefly on the importance of two parameters. The Macdonald polynomials are symmetric polynomials depending on the parameters $q$ and $t$ which have been studied intensively, and this had led (at the very least) to very interesting combinatorics, geometry, and algebra. In the $t=q$ specialization, the Macdonald polynomials degenerate to Schur polynomials, which are much more well-understood. It is therefore desirable to be able to ``see'' both parameters from topology.

\subsection{The elliptic Hall algebra and Homflypt skeins}
This paper deals with the ``infinite rank'' versions of the algebras in the previous section. The Kauffman bracket skein algebras are replaced with the \emph{Homflypt skein algebra} $\sk(T^2)$ of closed links in the thickened torus modulo the Homflypt skein relations. These relations are recalled in equations \eqref{eq:skeinrelintroc} and \eqref{eq:skeinrelintrof}, and they depend on parameters $s,v \in \mathbb{C}^\times$. The spherical DAHA is replaced by the \emph{elliptic Hall algebra} $\E_{\sigma, \bar \sigma}$ defined by Burban and Schiffmann \cite{BS12}. In earlier work we proved the analogue of Theorem \ref{thm:ktorus}:

\begin{theorem*}[{\cite{MS17}}]
	There is an isomorphism 
	\[
	\sk(T^2) \cong \E_{s,s}
	\]
	between the Homflypt skein algebra of the torus and the $\sigma=\bar \sigma = s$ specialization\footnote{To be precise, the presentation of $\sk(T^2)$ does not depend on the parameter $v$, so technically the right hand side of the isomorphism should be $\E_{s,s}\otimes_k \mathbb{C}[v^{\pm 1}]$. Also, see Remark \ref{rmk:params} for a comparison of this specialization to the $q=1$ specialization.} of the elliptic Hall algebra.
\end{theorem*}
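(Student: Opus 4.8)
The plan is to prove the isomorphism by transporting the Burban--Schiffmann generators-and-relations presentation of $\E$ to the skein side. Recall that the Homflypt skein algebra $\CC$ of the annulus is isomorphic to the ring of symmetric functions, and in particular carries power-sum elements $P_m$ for $m \geq 1$. Given a class $(a,b) \in \Z^2$ with $\gcd(a,b)=d$ and primitive part $(a_0,b_0)$, one may embed the annulus into $T^2$ as a regular neighbourhood of an embedded $(a_0,b_0)$-curve and decorate that curve by $P_d$; write $P_{(a,b)} \in \sk(T^2)$ for the resulting element, and more generally allow any symmetric function as decoration. A first reduction is to show that these decorated curves generate $\sk(T^2)$ as an algebra. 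This is arranged by taking a spanning set of $\sk(T^2)$ by embedded multicurves and using the mapping class group $\SL_2(\Z)$ of $T^2$, which acts on $\sk(T^2)$ by algebra automorphisms and acts on the slope data in the evident way, to move any configuration to a standard one.

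Next I would define the candidate homomorphism. Burban and Schiffmann present $\E_{\sigma,\bar\sigma}$ with generators $u_{\mathbf{x}}$ indexed by $\mathbf{x} \in \Z^2 \setminus \{0\}$, together with derived ``imaginary'' elements $\theta_{\ell\mathbf{x}_0}$ defined along each ray $\Z_{>0}\mathbf{x}_0$ by an exponential generating function
\[
\sum_{\ell\ge 0}\theta_{\ell\mathbf{x}_0}\,z^{\ell}=\exp\Big(\,\sum_{\ell\ge 1} c_\ell\, u_{\ell\mathbf{x}_0}\,z^{\ell}\Big),
\]
subject to a Heisenberg-type relation along each ray, the basic commutator relation $[u_{\mathbf{y}},u_{\mathbf{x}}] = c\,\theta_{\mathbf{x}+\mathbf{y}}$ whenever $\det(\mathbf{x},\mathbf{y}) = 1$, and the relations obtained from these by $\SL_2(\Z)$-symmetry. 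The map $\Phi \colon \E_{s,s} \to \sk(T^2)$ would send $u_{\mathbf{x}}$ to a suitable scalar multiple of $P_{\mathbf{x}}$ of the matching slope; the displayed exponential relation then matches, on the skein side, the classical identity in $\CC$ expressing complete symmetric functions through power sums, so that $\theta_{\ell\mathbf{x}_0}$ corresponds to the complete-symmetric decoration. The specialization $\sigma=\bar\sigma=s$ is forced by the scalars that render the relations homogeneous.

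The heart of the argument is checking that $\Phi$ respects the defining relations. Using the $\SL_2(\Z)$-equivariance of both presentations, each relation can be normalized to a single base configuration, e.g.\ $\mathbf{x}=(1,0)$ and $\mathbf{y}=(0,1)$ for the unimodular commutator. I would then compute the two orderings of the product $P_{(1,0)}P_{(0,1)}$ directly in the thickened torus: the two curves meet in a single transverse point, and resolving that crossing with the Homflypt skein relation \eqref{eq:skeinrelintroc} expresses their commutator as a scalar multiple of the $(1,1)$-curve, matching $\theta_{(1,1)}$; the scalar is the skein coefficient coming from the resolution. The along-a-ray relations reduce to the product-to-sum formula for power sums in $\CC$, already available from earlier Homflypt skein computations. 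Tracking the powers of $s$ (and of the auxiliary $v$) through these resolutions is what identifies the resulting structure constants with those of Burban and Schiffmann at $\sigma=\bar\sigma=s$.

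Finally, to see that $\Phi$ is an isomorphism I would compare bases. The algebra $\E$ has a PBW-type basis of ordered monomials in the $u_{\mathbf{x}}$, equivalently indexed by finitely supported functions from $\Z^2$ to partitions, and the images of these monomials are the corresponding products of decorated curves, which form a known basis of $\sk(T^2)$ arising from the fact that curves of distinct slopes can be isotoped into disjoint position. Since $\Phi$ carries one basis to the other, it is bijective. The main obstacle is the relation-checking step of the third paragraph: establishing the precise commutator and product-to-sum formulas in $\sk(T^2)$ and verifying that the coefficients they produce agree exactly with the Burban--Schiffmann structure constants under $\sigma=\bar\sigma=s$, including the correct normalization of the $\theta_{\ell\mathbf{x}_0}$ and the bookkeeping of framings and orientations in the repeated skein resolutions.
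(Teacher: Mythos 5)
Your skeleton --- power-sum--decorated curves $P_\xx$ (the paper's $W_\xx$) as generators, transporting the Burban--Schiffmann presentation, then comparing PBW-type bases --- is exactly the strategy of \cite{MS17}, where this theorem is proved. The first genuine gap is your claim that $\SL_2(\Z)$-equivariance reduces the relation check to the single configuration $\xx=(1,0)$, $\yy=(0,1)$ plus along-ray identities. The $\SL_2(\Z)$-action on $\Z^2$ preserves $\det(\xx\,\yy)$ and the gcds $d(\xx), d(\yy), d(\xx+\yy)$, while relation \eqref{eq:hallrel} ranges over many orbits: for instance $\xx=(0,1)$, $\yy=(m,0)$ (a $P_m$-decorated curve crossing an undecorated one, where the resolution needs the annulus identities governing how a power-sum decoration passes through a crossing, not a single Homflypt move), and $\xx=(1,0)$, $\yy=(1,2)$, where $\xx+\yy=(2,2)$ is non-primitive (two undecorated curves meeting twice, whose smoothings are not embedded and must be resolved recursively). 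What is actually required is the full family $[P_\xx,P_\yy]=(s^{m}-s^{-m})P_{\xx+\yy}$, $m=\det(\xx\,\yy)$, for \emph{all} pairs --- this is the presentation of $\sk(T^2)$ established in \cite{MS17}, and proving it is an induction on intersection data that constitutes the technical heart of that paper, not one crossing resolution plus bookkeeping of powers of $s$.

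The second gap is that your treatment of the $\theta$'s is inconsistent with your own choice of map. Suppose $u_\xx\mapsto c_\xx P_\xx$ with $c_\xx\neq 0$ and $\theta_{2\zz}$ corresponds to a complete-symmetric decoration $h_2=\tfrac12(p_2+p_1^2)$. Then \eqref{eq:hallrel} for $\xx=(1,0)$, $\yy=(1,2)$ would equate $[P_{(1,2)},P_{(1,0)}]$, which in $\sk(T^2)$ is a scalar multiple of $P_{(2,2)}$ alone, with a combination containing a nonzero multiple of $P_{(1,1)}^2$; since $P_{(2,2)}$ and $P_{(1,1)}^2$ are linearly independent (along each ray the $P$'s are power sums in an annulus subalgebra, hence algebraically independent), no choice of scalars makes this work. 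The actual mechanism is that at the specialization for which the theorem holds the constants $\alpha_i$ degenerate to $0$ (this is how the specialization must be read in view of the parameter symmetry of Remark \ref{rmk:params}); the exponential series collapses, relation \eqref{eq:hallrel} is interpreted as a limit in which $\theta_{\xx+\yy}/\alpha_1$ becomes $\lim(\alpha_{d(\xx+\yy)}/\alpha_1)\,u_{\xx+\yy}$, and the presentation linearizes --- only then can it match the linear skein relations under the rescaling $u_\xx\mapsto P_\xx/(s^{d(\xx)}-s^{-d(\xx)})$. Computing these limit ratios and verifying the two ideals coincide is the real algebraic content; the ``complete symmetric function'' matching, at any parameters where the $\theta$'s are genuinely nonlinear, would be attempting to prove a false statement.
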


We make the following conjecture which is the analogue of Theorem \ref{thm:kptorus}. Note that as in the Kauffman bracket case, the source algebra only depends on one parameter $s$ -- we expect the second parameter in the elliptic Hall algebra to arise as in the kernel of the map from $\sk(T^2 \oursetminus D^2)$. Also, we point out that $\sk(T^2 \oursetminus D^2)$ is ``much bigger'' than the elliptic Hall algebra: as a vector space it is isomorphic to a polynomial algebra with generators ``conjugacy classes in the free group of rank 2,'' while the elliptic Hall algebra is isomorphic as a vector space to a polynomial algebra with generators indexed by $\mathbb{Z}^2$.
\begin{conjecture}\label{conj:punctoeha}
	There is a surjective algebra map $\sk(T^2\oursetminus D^2) \twoheadrightarrow \E_{\sigma, \bar \sigma}$. This map takes a simple closed curve of homology class $\xx \in \Z^2$ to the generator $u_\xx \in \E_{\sigma, \bar \sigma}$ used by Schiffmann and Vasserot.
\end{conjecture}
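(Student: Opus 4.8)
The plan is to construct the map directly on a generating set and to put all the real work into well-definedness. The Homflypt skein algebra $\sk(T^2 \oursetminus D^2)$ is generated as an algebra by (framed) simple closed curves together with the small loop $\delta$ encircling the puncture, so it suffices to declare $\phi(\gamma_\xx) = u_\xx$ for a simple closed curve $\gamma_\xx$ of primitive class $\xx$, to fix the images of $\delta$ and of the framing/twist elements (these are central and should go to the central elements of $\E_{\sigma,\bar\sigma}$ that carry the parameters), and to extend multiplicatively. Once $\phi$ is known to be a well-defined algebra homomorphism, surjectivity is automatic: the $u_\xx$ are Schiffmann and Vasserot's generators of $\E_{\sigma,\bar\sigma}$, and each lies in the image. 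Thus the entire content is that every skein relation among curves maps to a valid identity among the $u_\xx$.

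The heart of this is the single-intersection case. Two simple closed curves of classes $\xx,\yy$ with $\det(\xx,\yy)=\pm 1$ meet transversally in one point; stacking them in $(T^2\oursetminus D^2)\times[0,1]$ and applying the Homflypt relation \eqref{eq:skeinrelintroc} at the crossing expresses the commutator $[\gamma_\xx,\gamma_\yy]$, up to powers of $s$, as the difference of the two smoothings, both of which are curves of class $\xx+\yy$. I would match this against the Burban--Schiffmann relation, which for $\det(\xx,\yy)=1$ rewrites $[u_\xx,u_\yy]$ in terms of $u_{\xx+\yy}$ and structure constants built from $\sigma,\bar\sigma$; comparing the two expressions forces the dictionary between $(\sigma,\bar\sigma)$ and the skein data $(s,v,\delta)$. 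The isomorphism $\sk(T^2)\cong\E_{s,s}$ of \cite{MS17} fixes the diagonal $\sigma=\bar\sigma=s$ and serves as the base case, and the new feature of the puncture is precisely the freedom that should split $\sigma$ from $\bar\sigma$.

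The main obstacle is that $\sk(T^2\oursetminus D^2)$ has no convenient finite presentation --- as noted in the introduction it is a polynomial algebra on conjugacy classes in $F_2$ --- so one cannot merely check finitely many relations. My plan to circumvent this is to exploit the skein-theoretic realization of the $\gl_n$ double affine Hecke algebra $\daha_n$ that is the technical core of this paper: the curves $\gamma_\xx$ and their products can be pushed into the image of $\daha_n$ in the punctured-torus skein module, compatibly in $n$, and the description of $\E_{\sigma,\bar\sigma}$ as the stable ($n\to\infty$) limit of the spherical subalgebras $\SH_n$ should identify $u_\xx$ with exactly the stable class of $\gamma_\xx$. Well-definedness would then follow from the compatibility of the skein relations with this stabilization, reducing the infinitely many relations to the structural relations of $\E$ that survive in the limit. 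I expect the genuinely hard points to be (i) controlling the curves of higher intersection number $|\det(\xx,\yy)|>1$ and the loops winding around the puncture, which have no direct counterpart among the $u_\xx$ and must be shown to map consistently, and (ii) pinning down the exact parameter dictionary so that the stable limit is $\E_{\sigma,\bar\sigma}$ itself rather than some reparametrization of it.
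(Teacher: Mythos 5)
First, note that the statement you are trying to prove is stated in the paper as a \emph{conjecture}: the paper itself does not prove it, and the most it establishes (Theorem \ref{thm:psktoeha}) is a conditional, weakened version --- assuming Conjecture \ref{conj:mainiso}, there is a surjection from the \emph{positive} subalgebra $\sk^+(T^2\oursetminus D^2)$ onto $\E^+_{\sigma,\bar\sigma}$. Your proposal follows essentially the same route as the paper (push curves into the DAHA picture, then use the Schiffmann--Vasserot stable limit), but it has three concrete gaps, each of which is fatal as written. (1) Your starting point, that $\sk(T^2\oursetminus D^2)$ is generated by simple closed curves together with the loop around the puncture, is false: the paper states explicitly that even the larger family $W_\xx$, $\xx\in\Z^2$ (which includes the $P_m$-decorated non-primitive classes, not just simple closed curves), does not generate $\sk(T^2\oursetminus D^2)$. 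The algebra is a polynomial algebra on conjugacy classes of the free group $F_2$, and classes that wind around the puncture (e.g.\ the commutator class $xyx^{-1}y^{-1}$) are not reachable from simple closed curves by skein relations; this is exactly the sense in which the punctured-torus skein is ``much bigger'' than $\E_{\sigma,\bar\sigma}$. So you cannot define the map on generators as you propose.

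(2) Your stabilization step silently assumes Conjecture \ref{conj:mainiso}. The natural map $\varphi_n$ sends curves into the tangle skein $\sk_n(T^2,*)$; the paper shows (Theorems \ref{thm:dahatoskein} and \ref{surjection}) that $F_n:\bsk_n(T^2,*)\cong\daha_n\to\sk_n(T^2,*)$ is surjective, but to transport $\varphi_n$ into a map \emph{to} $\daha_n$ --- which is what the Schiffmann--Vasserot limit requires --- you need $F_n$ to be injective, and that is precisely the open Conjecture \ref{conj:mainiso}, nowhere acknowledged in your proposal. (3) Even granting that, the Schiffmann--Vasserot limit only exists on positive halves: the projections $\SH^{n,+}_{q,t}\to\SH^{n-1,+}_{q,t}$ are defined only on the subalgebras generated by $Q_\xx$ with $\xx\in\ZZ^+$, and the isomorphism of Theorem \ref{thm:SVlimit} is $\E^+_{q,t}\cong\SH^{\infty,+}_{q,t}$. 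So this route can yield at best a map on $\sk^+(T^2\oursetminus D^2)$, not the surjection from the full algebra that the conjecture asserts; the paper's closing remark identifies exactly this (compatibility of the projections with the skein maps beyond the positive part) as the main obstruction. Your item (i) --- higher intersection numbers and loops around the puncture --- is not a technical point to be deferred; it is the open problem: the relations of $\E$ for $|\det(\xx\,\yy)|>1$ involve the nonlinear $\theta$-elements, and the corresponding relations among the $W_\xx$ fail to be provable in $\sk(T^2\oursetminus D^2)$ because the isotopies used in \cite{MS17} get caught on the puncture (Remark \ref{rmk:ptorusrel} only covers the single-intersection case). In short, even if all your deferred points were resolved, the argument would reproduce the paper's conditional Theorem \ref{thm:psktoeha}, not a proof of Conjecture \ref{conj:punctoeha}.
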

The currently available proofs of all three of the previous theorems involve giving explicit presentations of the algebras in the statements, and then using these presentation to construct an algebra map by hand. Giving a presentation of $\sk(T^2\oursetminus D^2)$ seems difficult, so instead of doing this, we give some evidence for this conjecture using other techniques, which we describe in the next subsection. These techniques are closely related to some techniques used or mentioned by others -- one reason we make precise statements of our own version is that it gives evidence for the conjecture above.

\subsection{DAHAs for $\gl_n$}


The elliptic Hall algebra $\E_{\sigma, \bar \sigma}$  is closely related to the double affine Hecke algebras $\daha_n$ of type $\gl_n$, as detailed in the work of Schiffman and Vasserot, \cite{SV11}. We were intrigued by the nature of the presentation of the  algebras $\daha_n$, which involved Homfly type relations and braids in the torus $T^2$. This led us to speculate on the possibility of constructing some form of skein theoretic model which would incorporate both the algebra $\daha_n$, in terms of braids,  and our original algebra of closed curves in the thickened torus. 

As a start we considered the possibility of a direct skein-based model in terms of $n$-braids in $T^2$ for the double affine Hecke algebra $\daha_n$ with parameters $t$ and $q$. 

The naive approach of considering $n$-braids in $T^2$ modulo the Homfly relations gives a model that works for one of the parameters $t$ but only covers the case $q=1$ for the other parameter.  A search of the literature came up with a paper by Burella et al, \cite{BWPV14}, suggesting that a model based on framed braids could handle the more  general case of $q\ne 1$, where adding a twist to the framing of a braid was reflected in multiplication by $q$. Their model depends on the product of certain braids with explicit framing resulting in a single twist on the framing of one string. We tried without success to follow the diagrammatic views of this product, which appears to us to have the trivial framing on all strings, and not the desired twist. We  worked out a uniform way of specifying a framing on the strings of a torus braid, noted in Theorem \ref{thm:dahatoskein} below, and we came to the conclusion that the use of framing alone would not provide a means of incorporating the second parameter $q$ into a geometric model for $\daha_n$.

We were still hopeful of making a skein-based geometric model, and we came up instead with one that includes an extra string. Instead of working with $n$-braids in the torus we use $(n+1)$-braids in which one distinguished string, called the \emph{base string}, is fixed throughout. Equivalently our geometric elements are $n$-braids in the once-punctured torus, regarding the fixed base string as determining the puncture.  In our model we use linear combinations of these braids. The regular $n$-string braids are allowed to interact as braids by the Homfly relations  and the parameter $q=c^{-2}$ is introduced when a regular string is allowed to cross through the base string. 

These relations can be summarised in diagrammatic form as \[\Xor -\Yor=(s-s^{-1})\Ior \]

\bc
\labellist\small
\pinlabel{$*$} at 146 740
\endlabellist\Xor$\quad=\quad c^2\ $
\labellist\small
\pinlabel{$*$} at 146 740
\endlabellist\Yor
\ec


In Section \ref{braidskein} of this paper we set up our skein model starting from ${\bf Z}[s^{\pm1},c^{\pm1}]$-linear combinations of $n$-braids in the punctured torus, up to equivalence.   We give a presentation for this algebra as a quotient of the group algebra of the braid group of $n$-braids in the punctured torus, using an explicit presentation by Bellingeri, \cite[Theorem 1.1]{B04}, for this braid group with generators $\sigma_1,\ldots,\sigma_{n-1}, a, b$. Our emphasis here is on the use of geometric diagrams to represent the elements of the algebra. Such an approach is used elsewhere with oriented framed (banded) curves in a variety of manifolds as the basic ingredients subject to the $3$-term linear relations above. A new addition in our current setting is the use of the base string, and the relation introducing the second parameter $c^2$ when a string is moved across it.


We give diagrammatic illustrations of some useful braids and their interrelations, and show how to interpret Bellingeri's presentation in terms of our braids.

The skein relations can then be included by adding the relations
\[\sigma_i-\sigma_{i}^{-1}=s-s^{-1}\] or \[(\sigma_i-s)(\sigma_i+s^{-1})=0\] in  quadratic form, and \[P=c^2,\] where $P$ is the braid taking string $n$ once round the puncture and fixing the other strings. There is a simple formula for $P$ in terms of the generators of the punctured braid group, which we provide.

The outcome is a presentation for the algebra of braids in the punctured torus modulo the skein relations, which is our skein-based model, $\bsk_n(T^2,*)$, see Definition \ref{def:bskein}.
We establish this presentation in Theorem \ref{braidpresentation}, and show how it corresponds exactly to the presentation in \cite{SV11} for the double affine Hecke algebra $\daha_n$:
\begin{theorem*}[see Thm.\ \ref{braidpresentation}]
	The braid skein algebra $\bsk_n(T^2,*)$ is isomorphic to the $\mathfrak{gl}_n$ double affine Hecke algebra $\ddot H_{n;q,t}$.
\end{theorem*}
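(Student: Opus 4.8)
The plan is to prove the isomorphism by exhibiting a presentation of $\bsk_n(T^2,*)$ and matching it generator-by-generator and relation-by-relation with the presentation of $\daha_{n;q,t}$ given by Schiffmann and Vasserot in \cite{SV11}. The whole argument is therefore a presentation-matching argument in which the geometric content is concentrated on the braid skein side, while the double affine Hecke side is imported from the literature. Concretely, I would build an explicit algebra map on generators, check it respects all defining relations in both directions, and conclude that it is a two-sided isomorphism.

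First I would establish the presentation of $\bsk_n(T^2,*)$ asserted in Theorem~\ref{braidpresentation}. The starting point is Bellingeri's presentation \cite[Theorem 1.1]{B04} of the braid group of the once-punctured torus on generators $\sigma_1,\dots,\sigma_{n-1},a,b$. Passing to the group algebra over $\mathbf{Z}[s^{\pm1},c^{\pm1}]$ and imposing the two skein relations --- the quadratic Hecke relation $(\sigma_i-s)(\sigma_i+s^{-1})=0$ on each crossing generator, together with the puncture relation $P=c^2$, where $P$ is expressed in terms of $\sigma_i,a,b$ by the explicit formula recorded earlier --- yields a presentation with the same generators and a finite list of relations. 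Verifying that this quotient really is $\bsk_n(T^2,*)$ amounts to checking that Bellingeri's relations together with the two skein relations capture every diagrammatic identity, so that no relation among braids in the punctured torus modulo skein relations is lost; this is where the diagrammatic lemmas on the distinguished braids and their interrelations are needed.

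Second, I would record the \cite{SV11} presentation of $\daha_{n;q,t}$. Since that presentation is itself phrased in terms of a finite Hecke algebra together with torus-braid generators subject to Homfly-type relations, its generators are naturally comparable to the skein ones. I would then define the candidate map by sending $\sigma_i$ to the finite Hecke generators and $a,b$ to the two commuting affine families of $X$- and $Y$-type generators, and fix the parameter dictionary in which the Hecke parameter $s$ corresponds to $t$ (up to the usual square-root convention) and $c$ corresponds to $q$ via $q=c^{-2}$. One then checks that each Bellingeri and skein relation is carried to a consequence of the DAHA relations, and conversely that each \cite{SV11} relation follows from the braid skein relations, which gives the inverse.

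The hard part will be matching the relations that mix the two parameters. The braid and Hecke relations map transparently to the finite-type relations of the DAHA, but the relation introducing $c^2$ when a strand is pushed across the base string, together with the loop relation $P=c^2$, must reproduce precisely the $q$-dependent ``affine'' relations of $\daha_{n;q,t}$ that govern the interaction of the $X$- and $Y$-families. Establishing this correspondence requires careful bookkeeping of framings and orientations and a correct identification of $a,b$ with the affine generators, and it is exactly here that the subtleties flagged in the introduction --- the failure of the framing-only model and the essential role of the base string --- are resolved.
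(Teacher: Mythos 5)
Your overall strategy coincides with the paper's: present $\bsk_n(T^2,*)$ by adjoining the Hecke relation and the puncture relation $P=c^2$ to Bellingeri's presentation of the punctured-torus braid group, then match this against the presentation of $\daha_n$ from \cite{SV11} via $x_1,y_1,\sigma_i\mapsto X_1,Y_1,T_i^{-1}$ with $t=s^2$, $q=c^{-2}$. However, there is a genuine gap at exactly the step you dismiss as ``checking that no relation among braids in the punctured torus modulo skein relations is lost,'' to be handled by ``diagrammatic lemmas on the distinguished braids.'' That step is the hard core of the theorem, and it is not a diagrammatic bookkeeping exercise. The algebra $\bsk_n(T^2,*)$ is defined by imposing the relations \eqref{eq:skeinrel} and \eqref{eq:puncturecross} \emph{locally}, inside an arbitrary ball anywhere in an arbitrary braid; to obtain a presentation one must show that the ideal generated by all such local instances coincides with the ideal generated by the two specific instances $\sigma_1-\sigma_1^{-1}=s-s^{-1}$ and $P=c^2$. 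The paper does this via Theorems \ref{thm:brskein} and \ref{thm:brpuncture}: for braids $\alpha,\beta,\gamma$ differing locally as in \eqref{eq:skeinrel} there exists a braid $L$ (hence invertible) with $L\beta=\sigma_1^{-2}L\alpha$ and $L\gamma=\sigma_1^{-1}L\alpha$, and similarly with $P^{-1}$ for the base-string relation, so that every local instance becomes a left-multiple of a standard one. Proving that such an $L$ exists is a nontrivial three-manifold argument: the local relation is encoded by a switching curve $K$, switching is recognized as $\pm 1$ Dehn surgery on $K$, and Yi Ni's theorem \cite[Theorem 1.1]{Ni11} on Dehn surgery in $F\times I$ is invoked to conclude that, up to isotopy, the switching curve projects to an embedded, crossingless curve in the surface encircling exactly two of the marked points, after which an isotopy to standard position produces $L$. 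Nothing in your proposal supplies this input, and it will not follow from manipulating the named braids $x_i$, $y_i$, $P$, $\beta_n$ alone.

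By contrast, the step you single out as ``the hard part'' --- matching the $q$-dependent relations --- is essentially routine once the presentation of Theorem \ref{braidpresentation} is in hand. The only relation whose preservation is not immediate is the commutator relation \eqref{eq:commutator}, $x_1y_1x_1^{-1}y_1^{-1}=c^2\sigma_1\cdots\sigma_{n-1}\sigma_{n-1}\cdots\sigma_1$, and the paper verifies it by rewriting it, using $y_1x_2\cdots x_n=x_2\cdots x_n\beta_n y_1$, into the form $c^{-2}x_1y_1x_1^{-1}=(x_2\cdots x_n)^{-1}y_1x_2\cdots x_n$, which under the assignment becomes precisely the DAHA relation $Y_1X_1\cdots X_n=qX_1\cdots X_nY_1$ together with the commutativity of the $X_i$. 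Also note that no ``bookkeeping of framings'' arises anywhere in this theorem: the braids in $\bsk_n(T^2,*)$ are unframed, and framings enter the paper only later, in the homomorphism $F_n$ to the tangle skein algebra $\sk_n(T^2,*)$, which is a separate (and conjectural, as regards injectivity) matter.
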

\begin{remark}
While this paper was in preparation, D. Jordan and M. Vazirani proved a very similar statement, that the DAHA is a quotient of the group algebra of the braid group of the punctured torus \cite[Prop. 4.1]{JV17}. One difference between their statement and ours is that they impose relations algebraically, while we impose ours topologically, giving a visually appealing interpretation of elements of the algebra and the underlying relations. More precisely, their relations are a subset of ours; it is therefore a-priori possible that we impose strictly more relations than they do, and the content of this theorem is that their relations imply ours. Let us also mention here that Cherednik states in \cite{Che05} that the $\mathfrak{gl}_n$ DAHA is a deformation of the Hecke quotient of the braid group of the \emph{closed} torus. One of our desires was to remove the word ``deformation'' from this statement so that the deformation parameter $q$ has a direct topological meaning.
\end{remark}


In  Section \ref{fullskein} we make use of framed $n$-tangles in the full framed Homfly skein of the punctured torus, $\sk_n(T^2,*)$. In this setting an $n$-tangle consists of $n$ framed oriented arcs in the thickened torus, along with a number of framed oriented closed curves. The arcs are no longer restricted to lying as braids in $T^2\times I$. We work with linear combinations of framed tangles and  impose the local relation 
\begin{equation}\label{eq:skeinrelintroc}
\Xorband\  -\ \Yorband \qquad =\qquad{(s-s^{-1})}\quad\ \Iorband
\end{equation}
between framed tangles, as well as the  change of framing relation 
\begin{equation}\label{eq:skeinrelintrof}
\rcurlorband \qquad=\qquad {v^{-1}}\quad \Idorband
\end{equation}
using a second parameter $v$. In keeping with the first section we include a base string defining the puncture, and allow a framed string to cross through it at the expense of the scalar $c^2$, with local relation
\bc
\labellist\small
\pinlabel{$*$} at 146 740
\endlabellist\Xor$\quad=\quad c^2\ $
\labellist\small
\pinlabel{$*$} at 146 740
\endlabellist\Yor .
\ec

There is a homomorphism from $\bsk_n(T^2,*)$ to $\sk_n(T^2,*)$, since the braids in $\bsk_n(T^2,*)$ can be given a  consistent framing using a nonvanishing vector field on the torus so that the relations in $\bsk_n(T^2,*)$ continue to hold in the wider tangle skein.
It is not clear however whether this homomorphism is injective. One point at issue is that, as well as the extra elements introduced, the  additional relations between them might have the effect of collapsing the algebra considerably. Nonetheless, we make the following:

\begin{conjecture}\label{conj:mainiso}
	The map $\bsk_n(T^2,*) \to \sk_n(T^2,*)$ from the braid skein algebra to the tangle skein algebra is an isomorphism. 
\end{conjecture}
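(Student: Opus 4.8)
The plan is to prove the map $\phi\colon\bsk_n(T^2,*)\to\sk_n(T^2,*)$ of Conjecture \ref{conj:mainiso} is an isomorphism by establishing surjectivity and injectivity by quite different means. Following the discussion preceding the conjecture, I expect surjectivity to be the routine direction and injectivity to be the genuine obstacle, since the worry is precisely that the extra isotopies and framings available to tangles but not to braids might impose relations beyond those of the DAHA.

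For surjectivity the goal is to show that the image of $\bsk_n(T^2,*)$, namely the framed braids, spans $\sk_n(T^2,*)$. First I would isotope a given framed $n$-tangle into a position generic (Morse) with respect to the $I$-direction and resolve all crossings using the Homfly relation \eqref{eq:skeinrelintroc}; this rewrites the tangle as a $\mathbf{Z}[s^{\pm1},v^{\pm1},c^{\pm1}]$-combination of tangles with fewer crossings, so an induction reduces us to controlling the local maxima and minima of the arcs. The key local input is that the framed Homfly skein of a disc with $n$ inputs and $n$ outputs is the Hecke algebra, spanned by positive permutation braids; applying this inside a disc swallowing all the turn-backs straightens the arc part into an honest braid, up to lower-order terms and powers of $v$ coming from the change-of-framing relation \eqref{eq:skeinrelintrof}. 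The remaining new feature is closed components; guided by the analysis of closed curves in the torus from \cite{MS17}, each such component is, modulo lower complexity, a product of $(p,q)$-curves realized as closures of torus braids, and can be isotoped to run alongside the braid strands and cut open onto them, while any component encircling the base string is traded for a power of $c^2$ via the base relation. This reduction is bookkeeping-heavy but should present no conceptual difficulty.

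For injectivity, the crux, the strategy I would pursue is representation-theoretic. By Theorem \ref{braidpresentation} we have $\bsk_n(T^2,*)\cong\daha_{n;q,t}$, and Cherednik's polynomial representation of $\daha_{n;q,t}$ is faithful. I would construct a geometric model of this representation on which the whole tangle algebra acts: let $\rho$ be the action of $\sk_n(T^2,*)$ by stacking on the relative skein module obtained by capping one end of the $n$-tangles, and verify that all three local relations act consistently there. If this module, restricted along $\phi$, reproduces the faithful polynomial representation of $\daha_{n;q,t}$, then $\rho\circ\phi$ is injective, and hence so is $\phi$. The hard part is exactly the construction and faithfulness check for this action: one must identify the underlying vector space of the module with the polynomial representation, match the skein-theoretic counterparts of the generators of $\daha_{n;q,t}$ with their algebraic definitions, and thereby show that the module is large enough that the feared collapse does not occur.

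As a complementary check, and as a backup should the module above prove too rigid, I would set up a complexity filtration on $\sk_n(T^2,*)$ — say by the number of closed components together with the geometric intersection number of the arcs with a fixed dual arc — and compare the associated graded with the PBW basis of $\daha_{n;q,t}$. Surjectivity supplies a spanning set indexed by at most the PBW data, so an upper bound on $\operatorname{gr}\sk_n(T^2,*)$ matching this indexing would force $\phi$ to be an isomorphism. Finally, since the presentation of $\bsk_n(T^2,*)$ does not involve $v$, I would track how $v$ enters through the framings of the closed and capped components and confirm, as in the closed-torus case of \cite{MS17}, that it decouples as a central scalar, so that the isomorphism is correctly understood after adjoining $\mathbb{C}[v^{\pm1}]$.
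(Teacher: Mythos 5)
The statement you are addressing is stated in the paper as a \emph{conjecture}, and the paper does not prove it: it proves only surjectivity of the map $F_n$ (Theorem \ref{surjection}), explicitly remarks that injectivity ``is not clear,'' and defers a proof to hoped-for future work combining representation-theoretic constructions (Jordan--Vazirani, Vazirani--Walker). Your surjectivity outline is broadly in the spirit of the paper's actual argument, but it understates the real content: a closed component winding around the torus cannot be ``isotoped to run alongside the braid strands and cut open onto them'' --- that is exactly what fails in the punctured torus, where curves get caught on the base string. The paper's proof instead reduces a general tangle to products of braids with totally ascending closed curves avoiding $D^2$ (Lemmas \ref{ascending}--\ref{diversion}), rewrites such curves as polynomials in the elements $W_\xx$ modulo braids, and only then expresses each $W_\xx$ as a combination of braids via the identity $(1-c^{2m})W_{(m,0)}=(s^m-s^{-m})\sum x_i^m$ of equation \eqref{eq:Wasbraid} together with the $SL_2(\Z)$ action; it is the base-string relation, not a local disc/Hecke argument, that lets closed curves be traded for braids.

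The genuine gap is injectivity, where what you offer is a research plan rather than a proof. To run your argument you would need to (i) construct an action of the full tangle algebra $\sk_n(T^2,*)$ on a relative skein module, (ii) verify that the framing relation and the closed components do not force extra relations in that action, and (iii) identify the restriction along $\phi$ with Cherednik's faithful polynomial representation of $\daha_n$. Steps (i)--(iii) are precisely the open problem; your own phrasing (``the hard part is exactly the construction and faithfulness check'') and the conditional ``if this module \ldots reproduces the faithful polynomial representation'' concede that all of the work remains. The fallback filtration argument has the same status: the needed upper bound on the associated graded of $\sk_n(T^2,*)$ --- that tangles with closed components contribute nothing beyond the PBW data of $\daha_n$ --- is equivalent to the collapse question the conjecture raises, so nothing has been reduced. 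As written, neither route closes the gap, and the statement remains exactly what the paper says it is: a conjecture, of which only the surjectivity half is established.
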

\begin{remark}
This may seem surprising, since tangles can contain closed curves, which means that a-priori, the tangle algebra is ``much bigger'' and the map in question should not be surjective. Indeed, for other surfaces the analogous map is not surjective. However, on the torus, an embedded closed curve ``on one side of the puncture'' is isotopic to the same curve ``on the other side of the puncture," and we use this fact in Theorem \ref{thm:dahatoskein} to show that the map in the conjecture is surjective. 
\end{remark}



There is an algebra map from the (classical) Homflypt skein module $\sk(T^2\oursetminus D^2)$ of closed links in the thickened punctured torus to our skein algebra $\sk_n(T^2, *)$ of tangles with a base string, given by ``filling in the puncture with the identity braid and the base string.'' If we assume Conjecture \ref{conj:mainiso}, this gives us an algebra map 
$\sk(T^2 \oursetminus D^2) \to \ddot H_n$ for any $n$. We can compose this map with multiplication by the symmetrizer $\e$ in the finite Hecke algebra to obtain a map $\sk(T^2\oursetminus D^2) \to \e \ddot H_n \e$ to the so-called \emph{spherical subalgebra}.

Schiffmann and Vasserot showed that the elliptic Hall algebra is the $n \to \infty$ limit of the spherical subalgebras (see Theorem \ref{thm:SVlimit} for a precise statement). Let $\sk^+(T^2 \oursetminus D^2)$ be the subalgebra generated by ``curves lifted from the closed torus which only cross the $y$-axis positively'' (see Definition \ref{def:posskein} for a precise statement). We then show the following, which we view as evidence for Conjecture \ref{conj:punctoeha}.

\begin{theorem*}[see Thm. \ref{thm:psktoeha}]
	Assuming Conjecture \ref{conj:mainiso} holds, there is a surjective algebra map $\sk^+(T^2\oursetminus D^2) \twoheadrightarrow \E^+_{\sigma, \bar \sigma}$. This map sends the simple closed curve of homology class $\xx \in \Z^2$ to the generator $u_\xx$ of the elliptic Hall algebra.
\end{theorem*}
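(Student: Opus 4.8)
The plan is to build the map one rank at a time and then pass to the Schiffmann--Vasserot stable limit. Assuming Conjecture \ref{conj:mainiso}, filling in the puncture gives for each $n$ an algebra map $\sk(T^2\oursetminus D^2)\to\daha_n$, and post-composing with multiplication by the symmetrizer produces $\phi_n\colon\sk(T^2\oursetminus D^2)\to\e\daha_n\e$ into the spherical subalgebra. Restricting to $\sk^+(T^2\oursetminus D^2)$ yields maps $\phi_n^+\colon\sk^+(T^2\oursetminus D^2)\to\e\daha_n\e$, and the goal is to show that these assemble into a cone over the inverse system whose limit is $\E^+_{\sigma,\bar\sigma}$ (Theorem \ref{thm:SVlimit}). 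The universal property of the inverse limit then produces the desired algebra map $\Phi^+\colon\sk^+(T^2\oursetminus D^2)\to\E^+_{\sigma,\bar\sigma}$.

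First I would compute the image under $\phi_n^+$ of a simple closed curve of homology class $\xx\in\Z^2$. The restriction to $\sk^+$ is what makes this tractable: a curve crossing the $y$-axis only positively can be isotoped into a standard position in the punctured torus, written as an explicit $n$-tangle in $\sk_n(T^2,*)$, and then read off in $\daha_n$ via the isomorphism of Theorem \ref{braidpresentation}. The surjectivity established in Theorem \ref{thm:dahatoskein}, which uses that a closed curve ``on one side of the puncture'' is isotopic to the same curve on the other side, keeps these images under control. I expect the result to be precisely the element that Schiffmann and Vasserot denote $u_\xx$ inside $\e\daha_n\e$ under the identification of Theorem \ref{thm:SVlimit}.

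Next I would verify the compatibility $\rho_n\circ\phi_n^+=\phi_{n-1}^+$, where $\rho_n\colon\e\daha_n\e\to\e\daha_{n-1}\e$ are the transition maps of the Schiffmann--Vasserot system. Since both composites are algebra maps and $\sk^+(T^2\oursetminus D^2)$ is generated by simple closed curves, it suffices to check this on the classes $\xx$, i.e.\ to check that $\rho_n$ sends $u_\xx$ to $u_\xx$; this is exactly the stabilization recorded in Theorem \ref{thm:SVlimit}. The universal property then delivers $\Phi^+$, and by construction it sends the simple closed curve of class $\xx$ to the generator $u_\xx\in\E^+_{\sigma,\bar\sigma}$. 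Surjectivity is then immediate: the image of $\Phi^+$ is a subalgebra containing every $u_\xx$, and the $u_\xx$ (for $\xx$ in the relevant half-plane) generate $\E^+_{\sigma,\bar\sigma}$ as an algebra, so $\Phi^+$ is onto.

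The main obstacle is the explicit identification underlying the first two steps: tracking a simple closed curve of class $\xx$ through the filling-in map, the symmetrizer, and the presentation of Theorem \ref{braidpresentation}, and matching the outcome with the Schiffmann--Vasserot generator $u_\xx$ uniformly in $n$ and genuinely compatibly with the transition maps $\rho_n$. This is where the geometric input -- the standard position of positive curves and the base-string relation introducing $c^2$ -- must be reconciled with the algebraic normalization of the $u_\xx$, and getting the scalars and the stabilization to agree on the nose is the delicate point.
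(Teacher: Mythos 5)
Your outline reproduces the paper's own proof in structure: the rank-$n$ maps you call $\phi_n^+$ are exactly the composites of $\varphi_n$ with $F_n^{-1}$ (available by Conjecture \ref{conj:mainiso}) followed by compression $Z\mapsto e_nZe_n$, and your ``cone over the inverse system plus universal property'' step is precisely what the paper has packaged as Corollary \ref{cor:limitmap}, whose content is the stabilization $Q^n_\xx\mapsto Q^{n-1}_\xx$ together with Theorem \ref{thm:SVlimit}. So there is no divergence of route; the issue is that the one step you defer --- ``I expect the result to be precisely the element that Schiffmann and Vasserot denote $u_\xx$'' --- is the actual mathematical content of the theorem, and it does not follow from putting curves in standard position and reading them off through the presentation of Theorem \ref{braidpresentation}.

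That deferred step is the paper's Theorem \ref{PWcomparison}, $(q^m-1)\,e_nW_\xx e_n=(s^m-s^{-m})\,Q_\xx$ for $\xx=m\yy$ with $\yy$ primitive, and it occupies most of Section \ref{fullskein}. Concretely, what is missing from your sketch: (i) the generators $W_\xx$ of $\sk^+(T^2\oursetminus D^2)$ with $d(\xx)=m>1$ are not simple closed curves but curves decorated by the annulus element $P_m$, and the identification with SV's power sums rests on the skein identities \eqref{eq:Wasbraid}, e.g.\ $(1-c^{2m})W_{(m,0)}=(s^m-s^{-m})\sum x_i^m$, obtained by pushing the $P_m$-decorated curve through the base string and using the characteristic ``back minus front'' property of $P_m$; this is also where the asymmetric factors $q^m$ in $Q_{(m,0)}$ and $Q_{(0,-m)}$ get accounted for. (ii) For $\xx$ off the axes one must prove that the Dehn-twist automorphisms $\tau_1,\tau_2$ of the skein, which carry $W_\xx$ to $W_{\gamma\xx}$, correspond under the isomorphism to the automorphisms $\rho_1,\rho_2$ that Schiffmann--Vasserot use to define $Q_\xx$ from $Q_{(0,m)}$; without this equivariance your identification is known only on the axes, and it is exactly this equivariance that makes the identification uniform in $\xx$ and automatically compatible with the transition maps. (iii) Two smaller points: $Z\mapsto e_nZe_n$ is not an algebra map on all of $\daha_n$, so you must check that the images of closed curves centralize $H_n$ (the paper proves $\sum y_i^m$ commutes with every $\sigma_i$); and the scalars do not come out as ``precisely $u_\xx$'' --- the rank-$n$ map sends $W_\xx\mapsto \frac{s^{d(\xx)}-s^{-d(\xx)}}{q^{d(\xx)}-1}Q_\xx$, so the limit map sends $W_\xx\mapsto (s^{d(\xx)}-s^{-d(\xx)})\,u_\xx$. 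These scalars are $n$-independent, so your limit argument and the surjectivity conclusion survive, but the normalization is forced and should be stated.
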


One corollary of this theorem (again assuming Conjecture \ref{conj:mainiso}) is that the generator $u_\xx$ of the elliptic Hall algebra has a simple interpretation as a sum $W_\xx$ of certain closed curves on the torus, with homology class $\xx \in \Z^2 = H_1(T^2 \oursetminus D^2)$. In fact, these elements are lifts of the exact same elements in $\sk(T^2)$ that were used in \cite{MS17}. The subtle point here is that not all the relations between $W_\xx$ that were proved in \cite{MS17}  hold in the punctured torus, since the proofs of some of these relations used global isotopies on $T^2$ that don't lift to the punctured torus. Roughly, the problem is that some curves ``get caught on the puncture.'' When pushed into our skein algebra of tangles with a base string, these curves can once again be pushed through the puncture, but at the cost of some ``lower order terms'' involving braids, and these lower order terms contribute to the generating series relations in the elliptic Hall algebra. See also Remark \ref{rmk:ptorusrel}.

This suggests one purely algebraic question of possible interest. The Schiffmann-Vasserot elements $u_\xx$ are in the spherical DAHA $e_n \daha_{q,t} e_n$, where $e_n$ is the symmetrizer in the finite Hecke algebra. However, the images of our elements $W_\xx$ most naturally lie in the centralizer $Z_{\ddot H_n}(H_n)$ of the finite Hecke algebra $H_n$ inside the double affine Hecke algebra. This suggests there may be an interesting limit of these centralizers which would include the elliptic Hall algebra a subalgebra.

Let us briefly comment on related or future work. In \cite{JV17}, Jordan and Vazarani used factorization homology to construct representations of the braid-skein algebra $\bsk_n(T^2,*)$, and more skein-theoretic techniques to construct representations are being used in work in progress of Vazarani and Walker. We hope that some combination of these approaches could be used to prove Conjecture \ref{conj:mainiso}, but we don't discuss this in the present paper. 

We also note that the so-called $A_{q,t}$ algebra introduced by  Carlesson and Mellit in \cite{CM18} has a relation that looks like a 3-term version of the skein relation involving the base string. Discussions with Jordan and Mellit indicate that more precise versions of this statement are available, but this will be left to future work.

A summary of the contents of the paper is as follows. In Section 2 we recall algebraic background involving DAHAs and the elliptic Hall algebra. In Section 3 we define the braid skein algebra and show it is isomorphic to the DAHA, and in Section 4 we discuss the tangle skein algebra. In Section 5 we compare the tangle skein algebra and the (classical) skein algebra of closed links in the punctured torus to the elliptic Hall algebra.

\noindent \textbf{Acknowledgements:} This work was initiated during the authors participation in the Research in Pairs program at Oberwolfach in the spring of 2015, and we gratefully acknowledge their support for our stay there, and for their excellent working conditions. More work was done at conferences at the Isaac Newton Institute and at BIRS in Banff, and we gratefully acknowledge their support. Parts of the travel of the second author were supported by a Simons Travel Grant. We thank E.\ Gorsky, A.\ Negut, A.\ Oblomkov, O.\ Schiffmann,  E.\ Vasserot, M.\ Vazirani, and K. Walker for their interest and discussions of this and/or their work over the years. We especially thank D. Jordan and A. Mellit for many discussions closely related to this paper. We would also like to thank the referees for thoughtful comments and suggestions that helped us improve the exposition and clarity of the paper. We are grateful to M. Scharlemann for discussions in connection with our revision of Section  \ref{sec:isorels}. 

The work of the second author has been partially funded by the ERC grant 637618 and a Simons Foundation Collaboration Grant.


\section{Algebraic background}\label{sec:algebra}
In this section we recall the algebraic definitions and results that we need in the rest of the paper. In particular, we define the elliptic Hall algebra and double affine Hecke algebras (DAHAs), and we recall results of Schiffmann and Vasserot relating the two. In later sections we use their results to relate the skein algebra of the punctured torus to the elliptic Hall algebra.

\subsection{The Elliptic Hall algebra}
Let us recall the definition of the elliptic Hall algebra $\E = \E_{\sigma, \bar \sigma}$ of Burban and Schiffmann \cite{BS12}, using the conventions of \cite{SV11}. It is an algebra over the ring $\Q(\sigma, \bar \sigma)$, and it is generated by elements $u_\xx$ for $\xx \in \Z^2$, subject to the following relations:
\begin{enumerate}
	\item If $\xx$ and $\xx'$ belong to the same line in $\Z^2$, then $[u_\xx,u_{\xx'}] = 0$.
	\item Assume that $\xx$ is primitive and that the triangle with vertices $0$, $\xx$, and $\xx+\yy$ has no interior lattice points. Then
	\begin{equation}\label{eq:hallrel}
	[u_\yy, u_\xx] = \epsilon_{\xx,\yy} \frac {\theta_{\xx+\yy}}{\alpha_1}
	\end{equation}
	where the elements $\theta_\zz$ with $\zz \in \Z^2$ are obtained by the generating series identity
	\[
	\sum_{i} \theta_{i \xx_0} z^i = \exp\left( \sum_{i \geq 1} \alpha_i u_{i \xx_0} z^i\right)
	\]
	for $\xx_0 \in \Z^2$ primitive.
\end{enumerate}
In the above relations we used the constants $\epsilon_{\xx,\yy} = \mathrm{sign}(\det(\xx\,\yy))$ and 
\[
\alpha_i = (1-\sigma^i)(1-\bar \sigma^i)(1-(\sigma \bar \sigma)^{-i})/i
\]
We also define the following subsets of $\ZZ := \Z^2$:
\begin{equation}
\ZZ^> := \{(x,y) \mid x > 0\},\quad \ZZ^+ :=  \ZZ^> \sqcup \{(0,y) \mid y \geq 0\}
\end{equation}
We also use this notation to define subalgebras of $\E$, for example, 
\[
\E^+ := \langle u_\xx \mid \xx \in \ZZ^+\}
\]
We will use similar notation for other algebras generated by elements indexed by $\ZZ$.
Finally, let $d(\xx)$ be the greatest common denominator of the entries of $\xx \in \Z^2$.

\subsection{Limits of DAHAs}
We now recall the definition of the double affine Hecke algebra $\daha_n$, following the conventions given in \cite{SV11}.
This is an algebra over ${\bf Z}[t^{\pm1/2}, q^{\pm1}]$ with generators \[\{T_i\},1\le i\le n-1, \quad \{X_j\},\{Y_j\}, 1\le j\le n\] and relations
\begin{eqnarray}
(T_i+t^{1/2})(T_i-t^{-1/2})&=&0\\
T_i T_{i+1} T_i&=& T_{i+1}T_i T_{i+1}\\[0mm]
[T_i,T_j]&=&0, |i-j|>1\\[0mm]
[T_i,X_j]=[T_i,Y_j]&=&0, j\ne i,i+1\\[0mm]
[X_i,X_j]=[Y_i,Y_j]&=&0\\
X_{i+1}&=&T_iX_iT_i, \\
Y_{i+1}&=&T_i^{-1}Y_i T_i^{-1}\\
X_1^{-1}Y_2&=&Y_2X_1^{-1}T_1^{-2}\\
Y_1 X_1\cdots X_n&=&q X_1\cdots X_n Y_1
\end{eqnarray}

Let $e_n$ be the symmetrizing idempotent in the finite Hecke algebra (which is generated by the $T_i$'s), which is characterized by $T_je_n = e_n T_j = t^{1/2}e_n$ for all $j$. The spherical DAHA is the subalgebra $\SH^n_{q,t} := e_n \daha^n_{q,t} e_n$ of $\daha^n_{q,t}$, and it is also $\Z^2$-graded. There is an  $\SL_2(\Z)$ action on the subalgebra $\SH_{q,t}^n$ (see the paragraph above Lemma 2.1 in \cite{SV11}).



Following \cite[Sec.\ 2.2]{SV11} (except for the notational change $P\to Q$), for $k > 0$ we define elements 
\begin{equation*}
Q^n_{0,k} = e_n \sum_i Y_i^k e_n
\end{equation*}
Elements $Q^n_\xx$ for $\xx \in 
\Z^2$ are defined using the $\SL_2(\Z)$ action. We define $\SH_{q,t}^{n,>}$ to be the subalgebra of $\SH_{q,t}^n$ generated by $Q_{a,b}^n$ with $a > 0$.


Let us identify parameters $\sigma = q^{-1}$ and $\bar \sigma = t^{-1}$. Then Schiffmann and Vasserot proved the following theorem relating the elliptic Hall algebra and spherical DAHAs.
\begin{theorem}[{\cite[Thm.\ 3.1]{SV11}}]\label{thm:sv}
	The assignment 
	\[
	u_\xx \mapsto \frac 1 {q^{d(\xx)} - 1}Q_\xx^n
	\]
	extends uniquely to a $\Z^2$-graded $\SL_2(\Z)$-equivariant surjective algebra homomorphism 
	\[
	\phi^n: \E_{q,t} \twoheadrightarrow \SH_{q,t}^{n}
	\] 
\end{theorem}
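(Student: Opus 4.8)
The plan is to define $\phi^n$ on the generators by $u_\xx \mapsto (q^{d(\xx)}-1)^{-1}Q_\xx^n$ and to check that the two families of defining relations of $\E_{q,t}$ are respected by these images; uniqueness is then automatic, since the $u_\xx$ generate $\E_{q,t}$. The organizing principle throughout is $\SL_2(\Z)$-equivariance. Since the elements $Q_\xx^n$ are defined from the $Q_{0,k}^n$ precisely by transporting along the $\SL_2(\Z)$ action, it is enough to check that $\phi^n$ intertwines the two actions on a generating family, after which equivariance holds on the entire image. As $\SL_2(\Z)$ acts transitively on primitive vectors and preserves both the ``same line'' and the ``empty triangle'' conditions, each instance of a relation can be transported to a standard configuration before being verified.

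For relation (1), apply an element of $\SL_2(\Z)$ carrying the primitive direction of the line to $(0,1)$. The images become the elements $Q_{0,j}^n = e_n \sum_i Y_i^j\, e_n$, and these commute because the $Y_i$ commute pairwise ($[Y_i,Y_j]=0$), so their symmetric power sums commute inside $e_n \daha^n e_n$. This disposes of relation (1).

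Relation (2) is the main obstacle. Using equivariance I move $\xx$ to the standard primitive vector $(1,0)$; the empty-triangle hypothesis, together with the residual shearing symmetry (the stabilizer of $(1,0)$ in $\SL_2(\Z)$), then restricts $\yy$ to a short list of normal forms. In each case the relation becomes an explicit identity among the $Q_{a,b}^n$: one must compute the commutator of $e_n \sum_i Y_i^b\, e_n$ with $e_n \sum_i X_i^a\, e_n$ (and their $\SL_2(\Z)$-transforms), express the outcome in degree $\xx + \yy$, and match it against the generating-series definition $\sum_i \theta_{i\xx_0} z^i = \exp(\sum_{i \geq 1}\alpha_i u_{i\xx_0} z^i)$ of $\theta_{\xx+\yy}$. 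The engine of this computation is the DAHA cross relations, above all $X_1^{-1}Y_2 = Y_2 X_1^{-1}T_1^{-2}$ and $Y_1 X_1\cdots X_n = q\, X_1 \cdots X_n Y_1$, which are exactly the relations that inject the parameter $q$. Carrying the structure constants $\alpha_i = (1-\sigma^i)(1-\bar\sigma^i)(1-(\sigma\bar\sigma)^{-i})/i$ (with $\sigma = q^{-1}$, $\bar\sigma = t^{-1}$) and the normalizations $(q^{d(\xx)}-1)^{-1}$ correctly through this bookkeeping is where essentially all the difficulty lies, and it is what forces the precise scalar appearing in the statement; I expect this matching to be the genuinely hard step.

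Finally, the $\Z^2$-grading is preserved because $Q_\xx^n$ lies in degree $\xx$, which matches the grading of $u_\xx$, and $\SL_2(\Z)$-equivariance has already been arranged on generators. For surjectivity it remains to see that the $Q_\xx^n$ generate $\SH_{q,t}^n$; I would reduce this to the known fact that $e_n \daha^n e_n$ is generated by the symmetric functions of the $X_i$ together with the symmetric functions of the $Y_i$ --- equivalently by $Q_{1,0}^n$, $Q_{0,1}^n$ and their $\SL_2(\Z)$-transforms --- all of which lie in the image of $\phi^n$.
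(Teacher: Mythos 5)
The first thing to note is that the paper does not prove this statement at all: it is quoted as Theorem 3.1 of Schiffmann--Vasserot \cite{SV11} and used as a black box (it feeds into Corollary \ref{cor:limitmap} and hence Theorem \ref{thm:psktoeha}). So there is no internal proof to compare yours against; what you are offering is a reconstruction of the Schiffmann--Vasserot argument itself, and it has to be judged on those terms.

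As such a reconstruction, your outline has the right skeleton --- define $\phi^n$ on generators, observe that the normalizing scalar $(q^{d(\xx)}-1)^{-1}$ depends only on the $\SL_2(\Z)$-invariant $d(\xx)$ so that equivariance holds on generators by construction, and then transport each defining relation of $\E_{q,t}$ to a normal form before verifying it. But the decisive step is missing, and you say so yourself: for relation (2) you reduce to commutators of the form $\bigl[e_n\textstyle\sum_i Y_i^b\, e_n,\; e_n\sum_i X_i^a\, e_n\bigr]$ and then ``expect this matching to be the genuinely hard step.'' That matching against the generating series $\sum_i \theta_{i\xx_0}z^i=\exp\bigl(\sum_{i\ge 1}\alpha_i u_{i\xx_0}z^i\bigr)$ \emph{is} the theorem; it is not bookkeeping with the two cross relations you cite, but in \cite{SV11} rests on the faithful polynomial representation of the spherical DAHA and Cherednik's Macdonald-operator machinery (equivalently, on the realization of $\E_{q,t}$ as the Hall algebra of an elliptic curve). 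Two further gaps: (i) in relation (1), commutativity of $e_n\sum_i Y_i^j e_n$ and $e_n\sum_i Y_i^k e_n$ does not follow just from $[Y_i,Y_j]=0$ --- you must absorb the idempotent sitting in the middle of the product, which uses Bernstein's theorem that symmetric Laurent polynomials in the $Y_i$ are central in the affine Hecke algebra and hence commute with $e_n$ (note also that on a line one meets negative powers, where $Q^n_{0,-m}=q^m e_n\sum_i Y_i^{-m}e_n$ carries an extra scalar); (ii) surjectivity is reduced to the ``known fact'' that $\SH^n_{q,t}$ is generated by symmetric functions in the $X_i$ together with symmetric functions in the $Y_i$, but that fact is itself a nontrivial result established by Schiffmann--Vasserot, so invoking it unproved means your argument still leans on the very source whose theorem you are reconstructing. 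In short: correct strategy, honest about where the difficulty lies, but the core of relation (2), and hence the proof, is not actually carried out.
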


Given the previous theorem, a natural question is whether there is some type of limit one can take as $n \to \infty$. It turns out that there is, but to describe it Schiffmann and Vasserot first had to prove the following theorem.

\begin{theorem}[{\cite[Prop.\ 4.1]{SV13}}]
	The assignment $Q^n_\xx \mapsto Q^{n-1}_\xx$ for each $\xx \in \ZZ^+$ extends to a unique surjective algebra map $\Phi_n: \SH_{q,t}^{n,+} \to \SH_{q,t}^{n-1,+}$.
\end{theorem}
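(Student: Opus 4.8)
The plan is to deduce everything from the Schiffmann--Vasserot surjections $\phi^n$ of Theorem~\ref{thm:sv}. First, uniqueness is immediate: the elements $Q^n_\xx$ with $\xx \in \ZZ^+$ generate $\SH_{q,t}^{n,+}$ by definition, so a homomorphism is determined by its values on them. Surjectivity is equally immediate, since the images $Q^{n-1}_\xx$ generate $\SH_{q,t}^{n-1,+}$. The only real content is existence, i.e.\ well-definedness.

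For this I would restrict the maps $\phi^n$ to the positive part. Since $\phi^n$ is $\Z^2$-graded and sends $u_\xx \mapsto (q^{d(\xx)}-1)^{-1} Q^n_\xx$, it restricts to a surjection $\phi^n_+ : \E^+ \twoheadrightarrow \SH_{q,t}^{n,+}$, and likewise $\phi^{n-1}_+ : \E^+ \twoheadrightarrow \SH_{q,t}^{n-1,+}$. Because the scalar $(q^{d(\xx)}-1)^{-1}$ is independent of $n$, the two maps agree on generators up to this common invertible factor. Consequently $\phi^{n-1}_+$ factors through $\phi^n_+$ --- yielding the desired $\Phi_n$ with $\Phi_n(Q^n_\xx) = Q^{n-1}_\xx$ --- if and only if
\[
\ker \phi^n_+ \subseteq \ker \phi^{n-1}_+ .
\]
Thus the theorem reduces to this single nesting of kernels, which says precisely that every relation among the $Q^n_\xx$ already holds among the $Q^{n-1}_\xx$.

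To establish the nesting I would pass to the faithful polynomial representation of the spherical DAHA on $\Lambda_N := \mathbb{C}(q,t)[X_1^{\pm1},\dots,X_N^{\pm1}]^{S_N}$, in which the generators $Q^N_{0,k} = e_N(\sum_i Y_i^k)e_N$ act as power sums of Macdonald operators. I would then construct an explicit specialization homomorphism $\rho_n : \Lambda_n \to \Lambda_{n-1}$ setting the last variable $X_n$ to a suitable power of $t$, and verify the intertwining $\rho_n \circ Q^n_\xx = Q^{n-1}_\xx \circ \rho_n$ on a generating set of $\E^+$ (the Macdonald operators $Q_{0,k}$, where this is the classical stability of Macdonald operators, together with the operators in the $\xx = (1,k)$ directions). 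Given the intertwining on generators it propagates to all words, and faithfulness on $\Lambda_{n-1}$ then forces any operator that vanishes at level $n$ to vanish at level $n-1$, which is exactly $\ker\phi^n_+ \subseteq \ker\phi^{n-1}_+$.

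The hard part is the intertwining step, for two reasons. First, the elements $Q^n_\xx$ for general $\xx$ are defined via the $\SL_2(\Z)$ action, which does not commute with the symmetry-breaking specialization $\rho_n$; one therefore cannot reduce to the Macdonald operators $Q_{0,k}$ by equivariance and must instead control an honest generating set of $\E^+$ directly. Second, the non-Macdonald generators (multiplication-type operators in the $x=1$ directions) do not restrict cleanly: a power sum satisfies $p_k(X_1,\dots,X_n) = p_k(X_1,\dots,X_{n-1}) + X_n^k$, so $\rho_n$ produces scalar correction terms. The crux is to choose the specialization value of $X_n$ and to normalize so that these lower-order corrections either cancel or land in the kernel, leaving $Q^n_\xx \mapsto Q^{n-1}_\xx$ on the nose. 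This matching of normalizations is where I expect the main difficulty to lie.
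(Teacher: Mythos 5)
The paper does not actually prove this statement; it is quoted directly from Schiffmann--Vasserot \cite[Prop.\ 4.1]{SV13}, so there is no internal proof to compare against, and your proposal has to stand on its own. Its first half does: uniqueness and surjectivity are immediate from generation, and existence is indeed equivalent to the kernel inclusion $\ker \phi^n_+ \subseteq \ker \phi^{n-1}_+$, with the $n$-independence of the scalars $q^{d(\xx)}-1$ guaranteeing that the factored map sends $Q^n_\xx \mapsto Q^{n-1}_\xx$. The gap is in your plan for proving that inclusion. The exact intertwining $\rho_n \circ Q^n_\xx = Q^{n-1}_\xx \circ \rho_n$ is false for \emph{every} specialization map $\rho_n$, already for $\xx = (0,k)$: apply both sides to the constant polynomial $1$. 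The constant is a simultaneous eigenvector of the $Y_i$, so $Q^n_{0,k} = e_n \sum_i Y_i^k e_n$ acts on $1$ by a sum of $n$ powers of $t$ (a quantum integer $[n]$), while $Q^{n-1}_{0,k}$ acts on $\rho_n(1)=1$ by the corresponding sum of $n-1$ powers; these are distinct rational functions of $t$ (at $t \to 1$ they become $n$ and $n-1$). So the Macdonald-direction generators are exactly the ones that are \emph{not} stable: classical stability (Macdonald, Ch.\ VI.4) holds only for renormalized operators whose eigenvalues are $\sum_i (q^{k\lambda_i}-1)t^{-ki}$, i.e.\ after an $n$-dependent affine rescaling of $Q^n_{0,k}$. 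Your diagnosis is backwards on the other side too: the multiplication operators (power sums) are \emph{exactly} stable, but only under $x_n \mapsto 0$; specializing $X_n$ to a nonzero power of $t$ creates the corrections you mention and, worse, makes intertwining impossible for any genuine difference operator, since $(T_{q,x_n}f)\big|_{x_n = c}$ depends on $f\big|_{x_n = qc}$, which is not determined by $f\big|_{x_n = c}$ when $c \neq 0$.

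Moreover, even the correct repair does not yield the stated theorem. Using $x_n \mapsto 0$ and Macdonald's stable renormalized operators, the intertwining-plus-faithfulness argument (which also tacitly requires faithfulness of the \emph{positive} part acting on non-Laurent symmetric polynomials -- a point needing justification) produces an algebra map sending \emph{renormalized} level-$n$ generators to \emph{renormalized} level-$(n-1)$ generators. Because the renormalization constants (a rescaling by a power of $t^{n}$ and an additive constant) depend on $n$, the map so constructed sends $Q^n_{0,k}$ to $t^{k/2}Q^{n-1}_{0,k} + (\text{nonzero constant})$, not to $Q^{n-1}_{0,k}$. Passing from that map to $\Phi_n$ would require showing that the corresponding affine change of generators is an automorphism of $\SH_{q,t}^{n-1,+}$, which is genuinely additional content, not a bookkeeping step. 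In short, the ``matching of normalizations'' you defer at the end is the actual mathematical content of the proposition, and the mechanism you propose (choice of specialization point plus scalar normalization) cannot resolve it; this is precisely why the statement is a nontrivial result of \cite{SV13} rather than a formal consequence of Theorem \ref{thm:sv}.
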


This theorem allows us to construct a projective limit $\varprojlim \SH^n_{q,t}$. Also, the generators $Q^n_\xx$ provide elements in this projective limit, and we let $\SH^{\infty,+}_{q,t}$ be the subalgebra generated by these elements for $\xx \in \ZZ^+$. Theorem \ref{thm:sv} shows that there is a map from the elliptic Hall algebra to $\SH_{q,t}^{\infty,+}$.


\begin{theorem}[{\cite[Thm.\ 4.6]{SV13}}]\label{thm:SVlimit}
	The induced map $\phi^\infty: \E_{q,t}^+ \to \SH_{q,t}^{\infty,+}$ is an isomorphism.
\end{theorem}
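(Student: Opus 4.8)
The plan is to separate the statement into surjectivity, which is essentially formal, and injectivity, which carries all the content. Surjectivity of $\phi^\infty$ holds by construction: by Theorem \ref{thm:sv} each $\phi^n$ is surjective, and $\SH_{q,t}^{\infty,+}$ is \emph{defined} to be generated by the compatible systems $(Q^n_\xx)_n$, each of which equals $\phi^\infty(u_\xx)$ up to the nonzero scalar $q^{d(\xx)}-1$. For injectivity, note that since $\Phi_n\circ\phi^n=\phi^{n-1}$ on positive parts, an element $a\in\E_{q,t}^+$ lies in $\ker\phi^\infty$ if and only if $\phi^n(a)=0$ for every $n$. Thus injectivity of $\phi^\infty$ is exactly the assertion that $\bigcap_n \ker(\phi^n|_{\E_{q,t}^+})=0$, where the kernels form a nested decreasing chain because $\Phi_n\circ\phi^n=\phi^{n-1}$ forces $\ker\phi^n\subseteq\ker\phi^{n-1}$.

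Next I would exploit the $\Z^2$-grading. All of $\phi^n$, $\Phi_n$, $\phi^\infty$ are graded, so it suffices to prove $\bigcap_n \ker(\phi^n)=0$ inside each graded piece $(\E_{q,t}^+)_\xx$ for $\xx\in\ZZ^+$. Each such piece is finite dimensional (the first coordinates of the summands in a decomposition of a fixed $\xx$ are nonnegative and sum to that of $\xx$, leaving only finitely many decompositions), so the decreasing chain of subspaces $\ker(\phi^n)\cap(\E_{q,t}^+)_\xx$ stabilizes, and the problem reduces to showing that $\phi^n:(\E_{q,t}^+)_\xx\to(\SH_{q,t}^{n,+})_\xx$ is injective for all sufficiently large $n$. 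Since this restriction is already surjective, this is equivalent to the \emph{dimension equality} $\dim(\SH_{q,t}^{n,+})_\xx=\dim(\E_{q,t}^+)_\xx$ for $n\gg 0$. To control the right-hand side I would invoke the PBW-type basis of the positive elliptic Hall algebra of Burban--Schiffmann, which yields an explicit value for $\dim(\E_{q,t}^+)_\xx$ in terms of the combinatorics of $\ZZ^+$.

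The main obstacle is the matching \emph{lower bound} $\dim(\SH_{q,t}^{n,+})_\xx\geq\dim(\E_{q,t}^+)_\xx$ for large $n$: a priori $(\SH_{q,t}^{n,+})_\xx$ is only a quotient of $(\E_{q,t}^+)_\xx$, so its dimension could drop. To get this I would use a faithful representation that survives the limit. The spherical DAHA $\SH_{q,t}^n$ acts faithfully on its polynomial (Macdonald) representation, and as $n\to\infty$ these representations converge to an action of $\SH_{q,t}^{\infty,+}$ on the ring $\Lambda$ of symmetric functions, under which the generators $Q^\infty_\xx$ act by explicit operators (Macdonald's difference operators and their $\SL_2(\Z)$-images). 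Showing that, in each fixed degree $\xx$, the images of a PBW basis of $(\E_{q,t}^+)_\xx$ under this limiting action remain linearly independent would supply the needed lower bound, forcing $\phi^n$ to be an isomorphism in degree $\xx$ for $n\gg 0$ and hence $\phi^\infty$ injective. The delicate points are to make precise the stabilization of the finite-$n$ representations and to verify the independence of the resulting operators; establishing this linear independence in the limiting symmetric-function representation is where I expect the real work to lie.
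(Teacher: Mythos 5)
A preliminary remark: the paper contains no proof of this statement to compare against --- Theorem \ref{thm:SVlimit} is imported verbatim from Schiffmann--Vasserot \cite[Thm.\ 4.6]{SV13}. Measured against the actual structure of the algebras, your proposal has a genuine gap, and it sits exactly at the step carrying all the weight: the claim that each graded piece $(\E_{q,t}^+)_\xx$ is finite dimensional is false. Your justification controls only the first coordinates of the summands and overlooks that a summand with positive first coordinate may have arbitrarily negative second coordinate, compensated by vertical generators: for every $m \geq 0$ one has $(1,0)=(1,-m)+(0,m)$ with both parts in $\ZZ^+$, and the monomials $u_{(1,-m)}u_{(0,m)}$ are pairwise distinct elements of the Burban--Schiffmann PBW basis of slope-ordered (convex) monomials --- the very basis you invoke --- so $(\E_{q,t}^+)_{(1,0)}$ is infinite dimensional. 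The finite-$n$ targets are no better: for $n\geq 2$ the bidegree-$\xx$ piece of $\SH_{q,t}^{n,+}$ is also infinite dimensional (by the PBW decomposition of $\daha_n$ into elements $X^\mu T_w Y^\nu$ with $\mu,\nu\in\Z^n$, fixing $\sum_i\mu_i$ and $\sum_i\nu_i$ leaves infinitely many monomials). Consequently neither the stabilization of the decreasing chain of kernels nor the ``dimension equality for $n\gg 0$'' is available even in principle.

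Worse, the statement your reduction aims at is itself false: $\phi^n$ fails to be injective on $(\E_{q,t}^+)_{(1,0)}$ for \emph{every} $n$, so no largeness of $n$ rescues per-degree injectivity. Indeed, the vertical subalgebra generated by the $u_{(0,k)}$ is a polynomial ring whose image under $\phi^n$ is (after rescaling generators) the ring of symmetric polynomials in $Y_1,\dots,Y_n$; hence for $k>n$ there is a nonzero $\kappa\in(\E_{q,t}^+)_{(0,k)}$ with $\phi^n(\kappa)=0$ (a lift of a monomial symmetric function $m_\lambda(Y)$ with more than $n$ parts), and then $u_{(1,-k)}\,\kappa$ is a nonzero element of $(\ker\phi^n)_{(1,0)}$ --- nonzero because the products $u_{(1,-k)}u_{(0,\lambda_1)}\cdots u_{(0,\lambda_r)}$ are again distinct PBW basis elements. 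So injectivity of $\phi^\infty$ is genuinely a limit phenomenon: every $\ker\phi^n$ is nonzero in the relevant degrees, and what must be shown is that their intersection vanishes. The workable version of your plan is to fix the PBW basis of $\E_{q,t}^+$ and produce an increasing family of subsets $B_n$ exhausting it such that $\phi^n(B_n)$ stays linearly independent in $\SH_{q,t}^{n,+}$ (equivalently, to prove faithfulness of the limiting action of $\E_{q,t}^+$ on symmetric functions); then any fixed nonzero $a$ lies in the span of some $B_n$ and survives. Your final idea --- the stable polynomial/Macdonald representation --- is the right tool for that step, but it must replace the dimension count rather than supplement it.
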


Summarizing this work of Schiffmann and Vasserot, we obtain the following corollary which we use below.

\begin{corollary}\label{cor:limitmap}
	Suppose $A$ is an algebra generated by elements $a_\xx$ for $\xx \in S \subset \ZZ^+$. Suppose there are algebra maps $A \to \SH_{q,t}^{n,+}$ for each $n$ such that $a_\xx \mapsto Q_\xx$. Then there is an algebra map $A \to \E_{q,t}^+$ sending $a_\xx \mapsto (q^{d(\xx)}-1) u_\xx$.
\end{corollary}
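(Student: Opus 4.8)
The plan is to realize the desired map as a composite through the projective limit of the spherical DAHAs, invoking the isomorphism of Theorem \ref{thm:SVlimit} to land in $\E_{q,t}^+$. Write $\psi_n \colon A \to \SH_{q,t}^{n,+}$ for the given algebra maps, so that $\psi_n(a_\xx) = Q_\xx^n$ for each $\xx \in S$.

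First I would check that the family $\{\psi_n\}$ is compatible with the transition maps $\Phi_n \colon \SH_{q,t}^{n,+} \to \SH_{q,t}^{n-1,+}$, that is, that $\Phi_n \circ \psi_n = \psi_{n-1}$. Since both sides are algebra homomorphisms out of $A$, it suffices to verify the equality on the generators $a_\xx$, and there it is immediate from the defining property $\Phi_n(Q_\xx^n) = Q_\xx^{n-1}$. By the universal property of the inverse limit, this compatible family induces an algebra map $\psi_\infty \colon A \to \varprojlim \SH_{q,t}^{n,+}$ with $\psi_\infty(a_\xx) = (Q_\xx^n)_n$.

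Next I would observe that the image of $\psi_\infty$ lands in the subalgebra $\SH_{q,t}^{\infty,+}$: by definition this subalgebra is generated by the elements $(Q_\xx^n)_n$ for $\xx \in \ZZ^+$, and since $S \subset \ZZ^+$ and $A$ is generated by the $a_\xx$, the image of every element of $A$ is a polynomial in these generators. Theorem \ref{thm:SVlimit} asserts that $\phi^\infty \colon \E_{q,t}^+ \to \SH_{q,t}^{\infty,+}$ is an isomorphism, so I can form the composite
\[
A \xrightarrow{\ \psi_\infty\ } \SH_{q,t}^{\infty,+} \xrightarrow{\ (\phi^\infty)^{-1}\ } \E_{q,t}^+ ,
\]
which is the required algebra map. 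To identify it on generators, recall that $\phi^\infty$ is induced by the maps $\phi^n$ of Theorem \ref{thm:sv}, so $\phi^\infty(u_\xx) = \tfrac{1}{q^{d(\xx)}-1}(Q_\xx^n)_n$, whence $\phi^\infty\big((q^{d(\xx)}-1)u_\xx\big) = (Q_\xx^n)_n = \psi_\infty(a_\xx)$. Applying $(\phi^\infty)^{-1}$ then shows the composite sends $a_\xx \mapsto (q^{d(\xx)}-1)u_\xx$, as claimed.

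This argument is essentially a formal diagram chase once the cited results are in hand, so I do not expect a serious obstacle. The one point requiring care---and the only place where genuine content enters---is confirming that $\psi_\infty(A)$ lies in the subalgebra $\SH_{q,t}^{\infty,+}$ rather than merely in the (a priori larger) full projective limit $\varprojlim \SH_{q,t}^{n,+}$, since the isomorphism of Theorem \ref{thm:SVlimit} is only available on that subalgebra. This is precisely where the hypothesis $S \subset \ZZ^+$ gets used.
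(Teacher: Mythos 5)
Your proof is correct and takes essentially the same route as the paper: the paper offers no separate argument, presenting the corollary as a direct formal consequence of Theorem \ref{thm:sv}, the projection maps $\Phi_n$, and Theorem \ref{thm:SVlimit}, which is exactly the diagram chase through $\varprojlim \SH^{n,+}_{q,t}$ that you spell out. Your attention to the image landing in the subalgebra $\SH^{\infty,+}_{q,t}$ (where the isomorphism of Theorem \ref{thm:SVlimit} applies) is precisely the point that makes the deduction legitimate.
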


\begin{remark}\label{rmk:params}
	The elliptic Hall algebra could instead be defined using three constants $q_1, q_2, q_3$ which replace the symbols $\sigma, \bar \sigma, (\sigma \bar \sigma)^{-1}$ (so that the definition above could be recovered by specializing $q_1=\sigma$, $q_3 = \bar \sigma$, and $q_3 = q_1^{-1}q_2^{-1}$).  These three parameters $q_i$ then appear symmetrically in the presentation, and can therefore be permuted to give an isomorphic algebra. To the best of our knowledge, this symmetry seems mysterious from every point of view that appears in this paper. For example, in terms of elliptic curves over the finite field of order $q$, there is an identity $\sigma \bar \sigma = \sqrt{q}$, and this identity breaks the symmetry mentioned above. In terms of double affine Hecke algebras, the parameters $q$ and $t$ appear in the relations in very different ways, and there is again no symmetry in the parameters. Finally, at the topological level this symmetry does not seem to be visible, since the parameter $s$ appears as a relation between two strands in a tangle, while the parameter $c$ appears as a relation involving the base string and a strand in a tangle. 
	
	Let us also note that the skein relation we impose with the base string makes the base string ``invisible'' when $q=1$. This means that the $q=1$ specialization of the conjectures in the present paper should be compatible with the results of the previous paper \cite{MS17}; however, the previous paper states that the skein algebra of the torus is isomorphic to the $q=t$ specialization of the elliptic Hall algebra. 
	This apparent conflict is explained by the symmetry of parameters in $\E_{\sigma, \bar \sigma}$ described in the previous paragraph. In particular, this symmetry implies that the three specializations $\E_{q,1}$, $\E_{1,q}$, and $\E_{q,q}$ are all isomorphic. (The reason the $q=t$ specialization was chosen in \cite{MS17} is that this specialization is the one which is compatible with the actions of both algebras on the space of symmetric functions, since it is the specialization taking Macdonald functions to Schur functions.)
\end{remark}

\section{Skeins with a base string}\label{braidskein}

We will describe some skeins which use the framed Homfly relations on oriented  framed curves and braids in the thickened torus $T^2\times I$, together with a single fixed base string $\{*\}\times I\subset T^2\times I$.

In this section we define the braid skein algebra $\bsk_n(T^2,*)$ in terms of ${\bf Z}[s^{\pm1},c^{\pm1}]$-linear combinations of braids, and their composites, and prove that
it is isomorphic to the double affine Hecke algebra $\daha_n$ (following the conventions in \cite{SV11}). (See Theorem \ref{thm:bskiso}.) The multiplication in the braid skein algebra comes from stacking in the $[0,1]$ direction -- more precisely, it comes from the glue-then-rescale map $[0,1] \sqcup [1,2] \to [0,2] \to [0,1]$.

\subsection{Isotopies of braids in the punctured torus}

We start by considering the group of $n$-braids in the punctured torus $T^2\oursetminus \{*\}$. We will  work with the thickened torus $T^2\x I$ with a single fixed base  string $\{*\}\x I$ to determine by the  puncture $*\in T^2$. 
Braids are made up of $n$ strings oriented monotonically from $T^2\x \{0\}$ to $T^2\x \{1\}$ which do not intersect each other or the base string.
Braids are considered equivalent when the strings are isotopic avoiding the base string.

Composition of braids is defined by placing one on top of the other, using the convention that $AB$ means braid $A$ lying below braid $B$.  

As in \cite{MS17} we shall regard $T^2$ as given by identifying opposite pairs of sides in the unit square $[0,1]\times [0,1]$.
Take the base point $*$ to be the centre $(1/2,1/2)$ of the square. Fix $n>0$ points in order on the lower part of the diagonal of the square between $(0,0)$ and $*$ as the end points for $n$-string braids in $T^2\times I - \{*\}\times I$.

We can draw the thickened torus in plan view as a square with opposite pairs of edges identified. We  show the  braid points and the base string position  in the figure below, including a line along the diagonal through them as a visual help to keep track of them.
\bc
\Torusbase
\ec

We can indicate some simple braids where only one or two of the points move  by drawing the path of the moving points on the plan view, rather as in the diagrams in \cite{AM98}.  In this view the braid product is given by concatenation of the paths.

For example, write $ x_i$ for the braid in which point $i$ moves uniformly around the $(1,0)$ curve in the torus, and $ y_i$ where point $i$ moves around the $(0,1)$ curve, with all other points remaining fixed. These are shown in plan view as \bc $ x_i = $\xii,$\quad   y_i =$\etai
\ec
  and in a side view in figures \ref{xielevation} and \ref{yielevation}, where the colouring of the edges being identified is consistent with that used in the plan view. Similarly the braid $\sigma_i$ appears in plan view as in figure \ref{plansigmai},
\begin{figure}[ht]
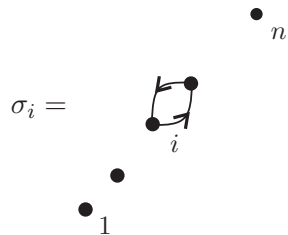

\bc $\sigma_i = $ \labellist\small
\pinlabel {$1$} at 100 337
\pinlabel {$i$} at 122 363
\pinlabel {$n$} at 155 398
\endlabellist \sigmaiplan
\ec
\caption{Plan view of $\sigma_i$}\label{plansigmai}
\end{figure}
 concentrating only on the region around the braid points.

A side elevation for $ x_i$ viewed in the $(0,1)$ direction is shown in figure \ref{xielevation}, 
\begin{figure}[ht]
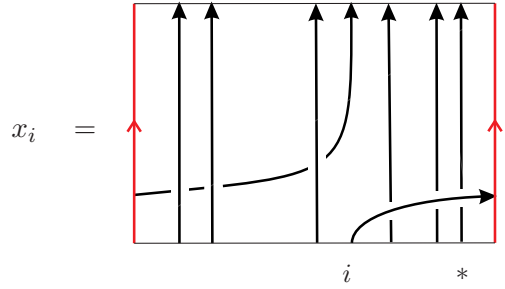

\bc $ x_i\quad = \quad $\labellist\small
\pinlabel {$i$} at 210 335
\pinlabel{$*$} at 273 335
\endlabellist \xiibraid
\ec
\caption{Side view of $x_i$}\label{xielevation}
\end{figure}
and $ y_i$ viewed in the $(-1,0)$ direction is seen in elevation in figure \ref{yielevation}.
 \begin{figure}[ht]
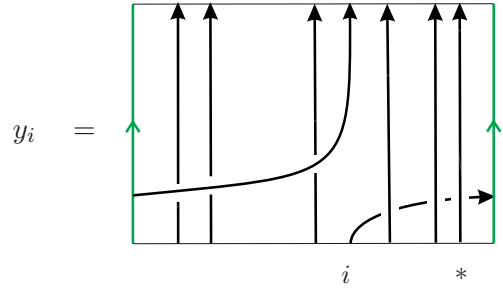

 \bc $ y_i\quad = \quad $\labellist\small
\pinlabel {$i$} at 210 335
\pinlabel{$*$} at 273 335
\endlabellist \etaibraid
\ec
\caption{Side view of $y_i$}\label{yielevation}
\end{figure}

Using either of these two elevation views the braids $\sigma_i$ appear in their usual form above, and it is immediate from these views that \begin{eqnarray}\sigma_i^{-1} x_i\sigma_i^{-1}&=& x_{i+1}\\
\sigma_i y_i\sigma_i&=& y_{i+1}.\end{eqnarray}

In a plan view we assume that paths are projections of  braid strings which rise monotonically from their initial braid point to their final braid point.  The product of two braids corresponds to the concatenation of their paths. 

We can see that the braids $\{ x_i\}$ commute among themselves, since their paths in the plan view are disjoint.  The same applies to the braids $\{ y_i\}$, and equally the braids $\sigma_i$  commute with  $ x_j$ and $ y_j$ when $j\ne i,i+1$.

The relations \[x_1x_2=x_2 x_1,\quad  y_1 y_2=y_2 y_1\] become
\begin{eqnarray}
 x_1\sigma_1^{-1}x_1\sigma_1^{-1} &=& \sigma_1^{-1}x_1\sigma_1^{-1} x_1,\\ 
 y_1\sigma_1 y_1\sigma_1&=& \sigma_1 y_1\sigma_1 y_1
\end{eqnarray}
in terms of the generators $x_1, y_1$.

We can use the plan view for a braid where two paths cross, taking the usual convention of knot crossings to show which strand lies at a higher level.  For example in the plan view of $ x_1 y_2$ the path of point $1$ lies below that of point $2$, giving views of $ x_1 y_2$ and $ y_2 x_1$ in figure \ref{pathproduct}.

\begin{figure}[ht]
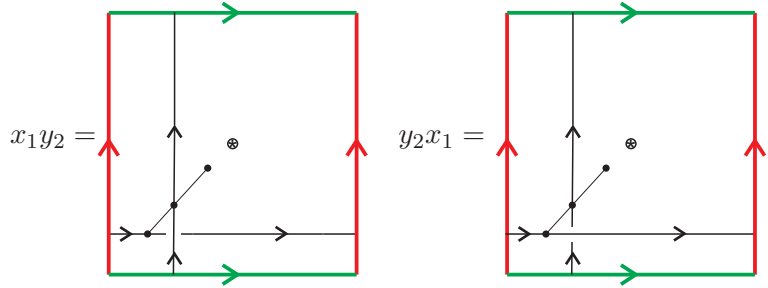

\bc $ x_1 y_2 =$\xioneetatwo $\quad  y_2 x_1 =$ \etatwoxione
\ec
\caption{Plan views of $ x_1 y_2$ and $ y_2 x_1$}\label{pathproduct}
\end{figure}

When two braids are composed there may be a path on the plan view that passes through a braid point at an intermediate stage. The plan can be altered to avoid such intermediate calls, by diverting the path slightly away from the braid point.
For example the braid $ x_1 y_1$ starts with a plan view in figure \ref{xy}. When the intermediate visit to braid point $1$ is diverted a plan view for $ x_1 y_1$ is shown in  figure \ref{smoothxy} along with   a view for $ y_1 x_1$.
\begin{figure}[ht]
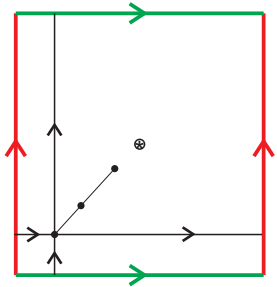

\bc \xioneetaone
\ec
\caption{Plan view of $ x_1 y_1$}\label{xy}
\end{figure}
\begin{figure}[ht]
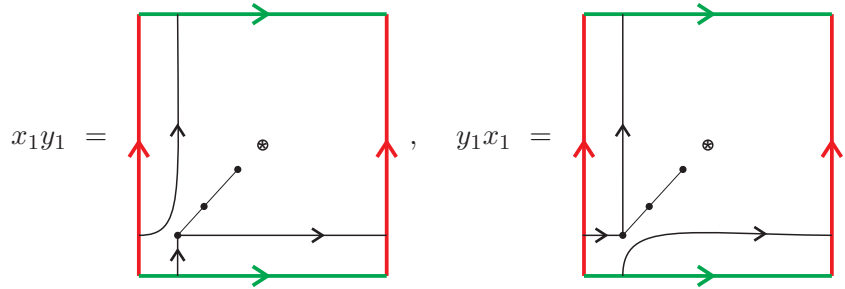

\bc $ x_1 y_1\  =\ $  \xioneetaonedivert\ , \quad $ y_1 x_1\  =\ $ \etaonexionedivert
\ec
\caption{Smoothed plan view of $ x_1 y_1$ and $ y_1 x_1$}\label{smoothxy}
\end{figure}

With further smoothing we get the plan view of the commutator $ x_1 y_1 x_1^{-1} y_1^{-1}$ as shown in figure \ref{xycommutator}. 
\begin{figure}[ht]
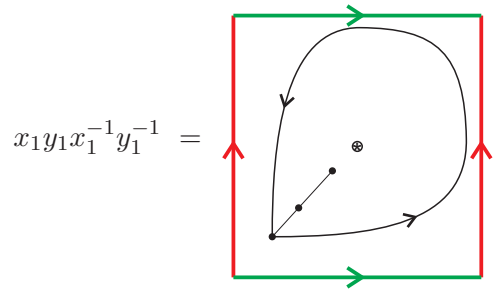

\bc $ x_1 y_1 x_1^{-1} y_1^{-1}\ =\ $ \xioneetaonecommutator 
\ec
\caption{Plan view of $ x_1 y_1 x_1^{-1} y_1^{-1}$}\label{xycommutator}
\end{figure}
From its elevation view in figure \ref{xycommutatorelev}
\begin{figure}[ht]
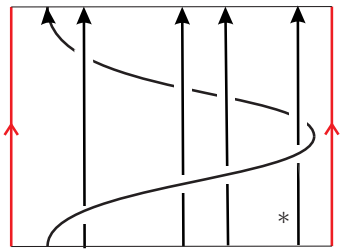

\bc
\labellist\small
\pinlabel{$*$} at 288 395
\endlabellist
\commutatoronebraid
\ec
\caption{Elevation of  $ x_1 y_1 x_1^{-1} y_1^{-1}$}\label{xycommutatorelev}
\end{figure}
we can write it as 
\[ x_1 y_1 x_1^{-1} y_1^{-1}=\sigma_1\sigma_2\cdots\sigma_{n-1}P\sigma_{n-1}\cdots\sigma_2\sigma_1. \] Here \[P=\Pbraid\] is the braid taking string $n$ once round the base string, with plan view \bc \Pplan\ec

This gives an expression 
\[P=\sigma_{n-1}^{-1}\cdots\sigma_1^{-1}x_1 y_1 x_1^{-1} y_1^{-1}\sigma_1^{-1}\cdots\sigma_{n-1}^{-1}.\]
as a braid in the punctured torus, in terms of the generators $x_1,y_1, \sigma_i$.

 As a further help in using the plan view for paths we can alter the view near the projection of one of the braid points, where a path starts out at the lowest level from the braid point and finishes at the highest level. Then another path crossing nearby (with either orientation) can be moved across the braid point as shown locally in figure \ref{braidpointmove}.
 \begin{figure}[ht]
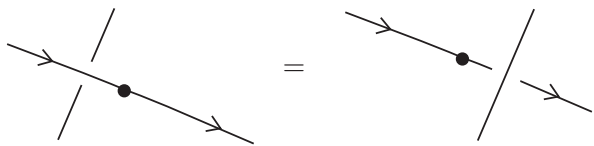

\bc \braidplanunder $\quad =\quad $ \braidplanover\ec
\caption{Moving an arc past a braidpoint}\label{braidpointmove}
\end{figure}

Apply this to the view of $ y_1 x_2$ by moving the path from braid point $1$ across braid point $2$. This gives \[
 y_1 x_2 =  \etaonexitwo = \xitwoalphatwo=  x_2\alpha_2\] where \[\alpha_2= \alphatwo =  \sigma_1^{2} y_1,\] and thus
\begin{eqnarray}
 x_2 y_1^{-1}&=& y_1^{-1} x_2\sigma_1^2.
\end{eqnarray}

 We can rewrite this equation in terms of the generators $x_1$ and $y_1$ as
\[\sigma_1^{-1}x_1\sigma_1^{-1}y_1^{-1}=y_1^{-1}\sigma_1^{-1}x_1\sigma_1\] and further
\[\sigma_1^{-2}x_1y_2^{-1}=y_2^{-1}x_1.\]

A similar argument, moving one path across braid points $2 \ldots n$,  shows that \[ y_1  x_2  x_3 \cdots  x_n = \etaonexitwon= \etaonexitwondivert= x_2  x_3 \cdots  x_n \alpha_n\] in the punctured braid group, where \[\alpha_n= \alphan= \beta_n  y_1,{\rm  with } \ \beta_n=\xioneetaonecommutatorbraid \] as in figure \ref{betan}, giving
\[ y_1 x_2 \cdots x_n= x_2\cdots x_n\beta_n y_1.\]

 \begin{figure}[ht]
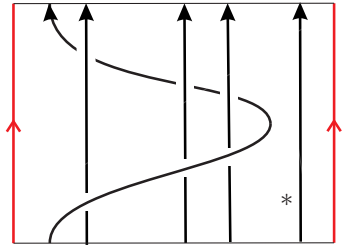

\bc
\labellist\small
\pinlabel{$*$} at 265 375
\endlabellist
\betanbraid
\ec
\caption{Side view of the braid $\beta_n=\sigma_1\sigma_2\cdots\sigma_{n-1}\sigma_{n-1}\cdots\sigma_2\sigma_1$}\label{betan}
\end{figure}

Bellingeri  \cite[theorem 1.1]{B04}  gives a presentation for the group of $n$-braids in the punctured torus  with generators \[\sigma_1,\cdots,\sigma_{n-1}, a,b,\] and relations
\begin{eqnarray}
\sigma_i\sigma_j&=&\sigma_j\sigma_i, |i-j|>1\\
\sigma_i\sigma_{i+1}\sigma_i&=&\sigma_{i+1}\sigma_i\sigma_{i+1}\\
\sigma_i a&=&a\sigma_i, i>1\\
\sigma_i b&=&b\sigma_i, i>1\\
a\sigma_1^{-1}a\sigma_1^{-1}&=&\sigma_1^{-1}a\sigma_1^{-1}a\\
b\sigma_1^{-1}b\sigma_1^{-1}&=&\sigma_1^{-1}b\sigma_1^{-1}b\\
b\sigma_1^{-1}a\sigma_1&=&\sigma_1^{-1}a\sigma_1^{-1}b
\end{eqnarray}

In our notation this corresponds to a presentation with generators $x_1, y_1,\sigma_i$ taking $a =y_1$ and $b=x_1^{-1}$ and $\sigma_i^{-1}$ in place of $\sigma_i$.

Bellingeri's relations involving $a$ and $b$ correspond to the equations 
\beqn
x_1 x_2&=& x_2 x_1\\
y_1 y_2 &=& y_2 y_1\\
x_2 y_1^{-1}&=& y_1^{-1} x_2\sigma_1^2
\eeqn
when written in terms of the generators $x_1, y_1, \sigma_1$.

\subsection{A presentation for the algebra $\bsk_n(T^2,*)$}

\begin{definition}\label{def:bskein}
The braid skein algebra $\bsk_n(T^2,*)$ is defined to be ${\bf Z}[s^{\pm1},c^{\pm1}]$-linear combinations of $n$-braids in the punctured torus, up to equivalence, subject to the local relations
\begin{equation}\label{eq:skeinrel}
\Xor -\Yor=(s-s^{-1})\Ior 
\end{equation}
and
\begin{equation}\label{eq:puncturecross}
\labellist\small
\pinlabel{$*$} at 75 105
\endlabellist\basecross \quad=\quad c^2\ 
\labellist\small
\pinlabel{$*$} at 80 100
\endlabellist\baseidentity
\end{equation}
between braids.
\end{definition}

By the term \emph{local relation} in this definition we mean that the braids in the relations only differ as shown inside a $3$-ball. We would like to find a ``small'' generating set for the ideal defined by these relations, which we do in the following three theorems. (To simplify exposition, Theorems \ref{thm:brskein} and \ref{thm:brpuncture} are proved in Subsection \ref{sec:isorels}.)

\begin{theorem}\label{thm:brskein} Suppose that $\alpha,\beta,\gamma$ are three $n$-braids in the punctured torus whose diagrams can be isotoped in $(T^2\oursetminus \{*\})\x I$, fixing the boundary, so that they differ only inside a ball as 
\[
\alpha=\Xor, \ 
\beta= \Yor, \ 
\gamma= \Ior.
\]


Then there exists a braid $L$  such that 
\[ L\beta=\sigma^{-2}L\alpha,\  L\gamma=\sigma^{-1}L\alpha.\]
\end{theorem}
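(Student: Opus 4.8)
The plan is to produce $L$ by ``combing'' the three braids so that the single crossing in which $\alpha,\beta,\gamma$ differ is exposed as the bottom-most standard generator. Concretely, I aim to find a braid $L$ and a braid $W$ in the punctured torus with
\[
L\alpha=\sigma_1 W,\qquad L\beta=\sigma_1^{-1}W,\qquad L\gamma=W .
\]
Granting this ``normal form,'' the two asserted identities follow immediately with $\sigma=\sigma_1$: we get $L\beta=\sigma_1^{-1}W=\sigma_1^{-2}(\sigma_1 W)=\sigma^{-2}L\alpha$ and $L\gamma=W=\sigma_1^{-1}(\sigma_1 W)=\sigma^{-1}L\alpha$. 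So the whole content is the production of $L$ and the normal form.

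First I would convert the topological hypothesis into a braid-word decomposition. Since the three diagrams agree outside a ball $B$ and differ inside it only by a single crossing of two strands, I would isotope (keeping the base string and the boundary fixed) so that this crossing occurs inside a thin horizontal slab $T^2\times[t_0-\varepsilon,t_0+\varepsilon]$, with every other strand running vertically through the slab. At a crossing the two participating strands are automatically adjacent in the projection, so in that slab the diagram of $\alpha$ reads as a single standard generator $\sigma_j$ (where $j,j{+}1$ are the positions of the two strands at the crossing), that of $\beta$ as $\sigma_j^{-1}$, and that of $\gamma$ as the trivial braid. Writing $P$ for the common braid below the slab and $Q$ for the common braid above it, this produces
\[
\alpha=P\,\sigma_j\,Q,\qquad \beta=P\,\sigma_j^{-1}\,Q,\qquad \gamma=P\,Q,
\]
with the same braids $P,Q$ of the punctured torus occurring in all three.

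Next I would use the standard fact that $\sigma_j$ is conjugate to $\sigma_1$ inside the planar braid group on the braid points: choose a braid $\tau$ supported in a neighborhood of the braid points (hence avoiding $*$, and so a legitimate punctured-torus braid) with $\sigma_j=\tau\,\sigma_1\,\tau^{-1}$. Setting $L=\tau^{-1}P^{-1}$ and $W=\tau^{-1}Q$, a short group computation gives $L\alpha=\tau^{-1}P^{-1}P\,\sigma_j\,Q=\tau^{-1}\sigma_j Q=\tau^{-1}\tau\,\sigma_1\,\tau^{-1}Q=\sigma_1\,\tau^{-1}Q=\sigma_1 W$, and likewise $L\beta=\sigma_1^{-1}W$ and $L\gamma=W$. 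This is exactly the normal form above, completing the argument (and in fact showing one may take $\sigma=\sigma_1$, although any $\sigma_j$ already works).

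The main obstacle is the first step: rigorously promoting the purely topological hypothesis (``the diagrams can be isotoped, fixing the boundary, so that they differ only inside a ball'') to the algebraic decomposition $\alpha=P\,\sigma_j\,Q$ with a common $P$ and $Q$. This needs a general-position argument flattening the crossing into a single horizontal slab while keeping the isotopy inside $(T^2\oursetminus\{*\})\times I$ and fixing the endpoints, together with the verification that the strands not involved in the crossing can be straightened to run vertically through the slab, so that they contribute nothing to the slab word. Once this normal form is established, the remainder is the elementary conjugation computation above.
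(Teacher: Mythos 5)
Your reduction is sound as group theory: once you have $\alpha=P\,\sigma_j\,Q$, $\beta=P\,\sigma_j^{-1}\,Q$, $\gamma=P\,Q$ with a \emph{common} $P$ and $Q$, conjugating $\sigma_j$ to $\sigma_1$ and setting $L=\tau^{-1}P^{-1}$ does give the two identities (this mirrors the final step of the paper's argument, where an intertwining braid $L$ is produced by standardizing a curve). But the step you defer to the end --- promoting the topological hypothesis to that algebraic decomposition --- is not a general-position exercise; it \emph{is} the theorem, and it is exactly where the paper has to invoke a deep result. The hypothesis only says the three diagrams can be made to agree outside an embedded ball by an isotopy of $(T^2\setminus\{*\})\times I$ rel boundary. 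After that isotopy the strands are in general no longer monotone (no longer braid diagrams), and the ball --- equivalently its equatorial ``switching curve'' $K$ --- can sit in a complicated position: its projection to the punctured torus may have crossings, and it may wind around the torus directions or around the other strands. Flattening the ball into a horizontal slab while simultaneously restoring all strands outside it to vertical/braided position is precisely the assertion that $K$ can be standardized, and a priori this can fail: switching (i.e.\ $\pm1$ Dehn surgery) along a curve whose projection has crossings does not in general produce a braid, as the paper itself points out.

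What rescues the statement in the paper is the hypothesis that $\beta$ is a braid: writing $\alpha\cup K=(\iota\cup K')\alpha$, so that the crossing switch becomes $\pm1$ surgery on a curve $K'$ in the complement of the identity braid, the fact that the surgered manifold is again a braid exterior --- hence homeomorphic to $F\times I$ (with $F$ the $(n+1)$-punctured torus) in a controlled, level-preserving way --- allows the authors to apply Yi Ni's theorem on Dehn surgery in products $F\times I$. That theorem forces $K'$ to be isotopic to an embedded, crossing-free curve in the surface bounding a disc around exactly two braid points; only then can a surface isotopy carry it to the standard curve $C_{1\,2}$, which is what produces $L$. Your proposal needs an input of this strength at the slab-flattening step; general position will not supply it (the authors' acknowledgement of discussions with Scharlemann about revising this very section is a hint of how delicate it is). A smaller point you should also record: the two arcs in the ball must belong to distinct strings, since a string crossing itself would, after smoothing, create a closed component, contradicting that $\gamma$ is a braid; the paper isolates this as a separate remark.
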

\begin{theorem}\label{thm:brpuncture} Suppose that $\delta,\epsilon$ are two $n$-braids in the punctured torus whose diagrams can be isotoped in $(T^2 \oursetminus D^2)\x I$, fixing the boundary, so that they differ only in a ball as 
\[
\delta= \labellist\small
\pinlabel{$*$} at 75 105
\endlabellist\basecross \  ,\ 
\epsilon= \labellist\small
\pinlabel{$*$} at 80 100
\endlabellist\baseidentity \ .
\]

Then there exists a braid $L$ such that 
\[ 
L\epsilon=P^{-1}L\delta,
\]
where $P$ is the braid taking string $n$ once round the base string, shown here in plan and elevation. 
\bc $P$ \quad =\quad \Pplan \quad =\quad \labellist\small
\pinlabel{$*$} at 193 130
\endlabellist
\Pbraid \ec

\end{theorem}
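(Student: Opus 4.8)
The plan is to reduce the conclusion to a single geometric fact: the local discrepancy between $\delta$ and $\epsilon$ is a meridian of the base string, and any such meridian is conjugate to $P$ in the punctured braid group. First I would rewrite the desired identity in conjugation form. Since composition is a group operation, $L\epsilon = P^{-1}L\delta$ is equivalent to
\[
L\,(\epsilon\delta^{-1})\,L^{-1} = P^{-1}.
\]
Thus it suffices to produce a braid $L$ that conjugates the ``difference braid'' $\epsilon\delta^{-1}$ to $P^{-1}$, and then multiply the resulting relation $L\epsilon\delta^{-1}=P^{-1}L$ on the right by $\delta$.

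Next I would analyze $\epsilon\delta^{-1}$ geometrically. By hypothesis $\delta$ and $\epsilon$ are isotopic in $(T^2\oursetminus D^2)\x I$, fixing the boundary, outside the distinguished ball, and inside it they differ only in whether a single strand, say strand $j$, passes to one side of the base string or across it (the two configurations of \eqref{eq:puncturecross}). In particular they induce the same permutation of the braid points, so $\epsilon\delta^{-1}$ is a \emph{pure} braid. Composing $\epsilon$ with $\delta^{-1}$ lets everything outside the ball cancel by the common isotopy, and I would isotope $\epsilon\delta^{-1}$, rel endpoints, to a braid in which every strand is vertical except strand $j$, which makes a single small loop encircling $\{*\}\x I$. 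This loop is unlinked from every other strand and from the rest of the diagram, so $\epsilon\delta^{-1}$ is isotopic to a meridian of the base string carried on strand $j$. Comparing orientations with the plan and elevation views of $P$, and with the direction of the crossing in the left-hand side of \eqref{eq:puncturecross}, identifies this meridian as representing $P^{-1}$; this sign is exactly the one consistent with the skein relation $\delta = c^2\epsilon$ together with $P=c^2$. This step is the geometric heart of the argument.

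Finally I would construct $L$ as the conjugating braid, using the standard fact that all meridians of a puncture lie in a single conjugacy class. Because the once-punctured torus and its configuration space of points are path connected, there is a braid that drags strand $j$, together with its small meridian loop, to the strand $n$ position adjacent to the base string, carrying the meridian to the standard generator $P$ displayed in the statement; the loop stays small, isolated, and unlinked throughout, so no extra crossings are created. Taking $L$ to be (the inverse of) this dragging braid then gives $L(\epsilon\delta^{-1})L^{-1}=P^{-1}$, and the rearrangement above yields $L\epsilon = P^{-1}L\delta$, as desired. This parallels the construction of $L$ in the proof of Theorem \ref{thm:brskein}, where instead two adjacent strands are brought to the standard position so that the local crossing becomes a single $\sigma$.

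I expect the main obstacle to be making the construction of $L$ precise while controlling two things simultaneously: verifying that the strand $j$ (with its meridian) can be dragged to the standard strand $n$ position entirely within the punctured braid group, avoiding both the base string and the other strands, and tracking orientations carefully so that the encircling loop is identified with $P^{-1}$ rather than $P$. These are precisely the points where the diagrammatic calculus for plan and elevation views developed earlier in the section does the work, so I would discharge them by exhibiting the required isotopy explicitly in those views, in parallel with the treatment of Theorem \ref{thm:brskein}.
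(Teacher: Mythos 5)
You have the right first move (rewriting the goal as $L(\epsilon\delta^{-1})L^{-1}=P^{-1}$), but the heart of your argument --- that $\epsilon\delta^{-1}$ can be isotoped, rel endpoints, so that every strand is vertical except strand $j$, which makes a \emph{single small loop} around the base string, ``unlinked from every other strand and from the rest of the diagram'' --- is false in general, and it is exactly the point on which the whole theorem turns. What the cancellation actually gives is that $\epsilon\delta^{-1}$ is the switch of the identity braid along the original switching curve \emph{dragged down through} $\delta^{-1}$: the moving strand does encircle the base string once, but only at the end of an excursion $w$ recording how the relevant string of $\delta$ wound around the other strands and around the handles of the torus. Rel endpoints this excursion cannot be removed: the based class of the moving strand is $w m w^{-1}$, not $m$. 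Already for $n=1$ this bites: the braid group of one string in the punctured torus is $\pi_1(T^2 - \{*\})$, free on $x_1,y_1$, with $P=x_1y_1x_1^{-1}y_1^{-1}$; if $\delta$ traverses $x_1$ and then passes in front of the base string while $\epsilon$ passes behind, then $\epsilon\delta^{-1}=x_1P^{\pm1}x_1^{-1}$, which is \emph{not} equal to $P^{\pm1}$ in the free group. So the discrepancy braid is only a \emph{conjugate} of $P^{-1}$, never canonically a small unlinked meridian, and your construction of $L$ by ``dragging a small, isolated, unlinked loop to the strand-$n$ position'' presupposes precisely what has to be proved, namely that the conjugating element can be realized by a braid; path-connectedness of the configuration space does not give this.

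This is where the paper does its real work. It encodes the crossing change by a switching curve $K$, writes $\delta\cup K=(\iota\cup K')\delta$, and then must show that $K'$ --- whose projection to the surface may a priori have many crossings --- can be isotoped so that its projection is an \emph{embedded} curve around the two relevant points. The paper explicitly flags this as the danger (the discussion of crossings in the diagram of $K'$ in the proof of Theorem \ref{thm:brskein}) and resolves it by applying Yi Ni's theorem on Dehn surgery in $F\times I$ \cite{Ni11}, packaged as Theorem \ref{th:switchcurve}; only then does an isotopy of the surface carrying the embedded curve to $C_{n\,*}$ produce $L$. Your conjugacy framing could in principle be completed by a different route that avoids Ni's theorem: the subgroup of braids in which only strand $j$ moves is identified, following Birman and Fadell--Neuwirth (see \cite{B04}), with $\pi_1$ of the complement of the other strands and the base string; under this identification $\epsilon\delta^{-1}$ corresponds to $wmw^{-1}$, and conjugating by the braid realizing $w$, composed with a positional braid moving $j$ to $n$, yields $P^{-1}$. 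But that identification, together with the verification that the moving strand's loop is a conjugated meridian, is the missing bridge in your write-up; without it the argument is circular.
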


\begin{theorem} The ideal generated by \eqref{eq:skeinrel} and \eqref{eq:puncturecross} is the same as the ideal defined by
\be
\sigma_1-\sigma_1^{-1}=(s-s^{-1})\label{eq:Hecke}
\ee
\be
P=c^2.\label{eq:basecircle}
\ee
\end{theorem}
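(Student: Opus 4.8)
The plan is to prove the two inclusions of two-sided ideals separately. Write $J$ for the ideal generated by all instances of the local relations \eqref{eq:skeinrel} and \eqref{eq:puncturecross}, and write $J'$ for the ideal generated by the two elements $\sigma_1-\sigma_1^{-1}-(s-s^{-1})$ and $P-c^2$; the goal is to show $J=J'$. The inclusion $J'\subseteq J$ is the easy direction, and it is simply the observation that each generator of $J'$ is a single instance of a local relation. Applying \eqref{eq:skeinrel} to the standard crossing of two adjacent strands, with $\alpha=\sigma_1$, $\beta=\sigma_1^{-1}$ and $\gamma=\Id$ (three braids agreeing outside one ball), puts $\sigma_1-\sigma_1^{-1}-(s-s^{-1})\Id$ into $J$; applying \eqref{eq:puncturecross} to the ball in which string $n$ meets the base string, with $\delta=P$ and $\epsilon=\Id$, puts $P-c^2\Id$ into $J$. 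Hence $J'\subseteq J$.

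The substance is the reverse inclusion $J\subseteq J'$, for which I would use Theorems \ref{thm:brskein} and \ref{thm:brpuncture}. First I would record that, modulo $J'$, the quadratic Hecke relation holds at \emph{every} crossing and not just the first: since $\sigma_{i+1}=(\sigma_i\sigma_{i+1})\sigma_i(\sigma_i\sigma_{i+1})^{-1}$, all the $\sigma_i$ are conjugate in the braid group, and because $J'$ is a two-sided ideal and $(s-s^{-1})$ is a central scalar, $\sigma_i-\sigma_i^{-1}-(s-s^{-1})\in J'$ for every $i$.

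Now take an arbitrary generator of $J$ arising from \eqref{eq:skeinrel}, say $r=\alpha-\beta-(s-s^{-1})\gamma$ with $\alpha,\beta,\gamma$ agreeing outside a ball. Theorem \ref{thm:brskein} supplies a braid $L$ with $L\beta=\sigma^{-2}L\alpha$ and $L\gamma=\sigma^{-1}L\alpha$ for a standard generator $\sigma=\sigma_i$, and then
\[
Lr=\bigl(1-\sigma^{-2}-(s-s^{-1})\sigma^{-1}\bigr)L\alpha=\sigma^{-1}\bigl(\sigma-\sigma^{-1}-(s-s^{-1})\bigr)L\alpha .
\]
The middle factor lies in $J'$, so $Lr\in J'$ since $J'$ is two-sided; as $L$ is an invertible braid and $J'$ is a left ideal, $r=L^{-1}(Lr)\in J'$. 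The base-string generators are handled identically: for $r'=\delta-c^2\epsilon$, Theorem \ref{thm:brpuncture} gives a braid $L$ with $L\epsilon=P^{-1}L\delta$, so
\[
Lr'=(1-c^2P^{-1})L\delta=P^{-1}(P-c^2)L\delta\in J',
\]
and again $r'=L^{-1}(Lr')\in J'$. Since such elements generate $J$, this yields $J\subseteq J'$ and hence $J=J'$.

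The heart of the argument is the existence of the auxiliary braid $L$ produced by Theorems \ref{thm:brskein} and \ref{thm:brpuncture}: geometrically $L$ slides the ball in which the braids differ into a standard position, turning the local crossing into the generator $\sigma_i$ (resp.\ turning the base-string interaction into $P$). This sliding is where the genuine work with isotopies in the punctured torus lives, and it is where I expect the main difficulty to be; once $L$ is in hand, the reduction above is purely formal, using only that braids are invertible in the group algebra and that $J'$ is a two-sided ideal. The one point to keep honest is that in the easy direction the element $P$ really is the single local base-string crossing resolved by \eqref{eq:puncturecross}, which is exactly the description of $P$ used in Theorem \ref{thm:brpuncture}.
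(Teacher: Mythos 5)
Your proposal is correct and follows essentially the same route as the paper: the easy inclusion is the observation that $\sigma_1-\sigma_1^{-1}=(s-s^{-1})$ and $P=c^2$ are single instances of the local relations, and the hard inclusion invokes Theorems \ref{thm:brskein} and \ref{thm:brpuncture} to produce the braid $L$, factors $1-\sigma_1^{-2}-(s-s^{-1})\sigma_1^{-1}=\sigma_1^{-1}\bigl(\sigma_1-\sigma_1^{-1}-(s-s^{-1})\bigr)$ (and $1-c^2P^{-1}=P^{-1}(P-c^2)$), and cancels the invertible braid $L$. Your side remark that all $\sigma_i$ satisfy the Hecke relation modulo $J'$ is harmless but unnecessary, since Theorems \ref{thm:brskein} and \ref{thm:brpuncture} already deliver the standard generator $\sigma_1$ (resp.\ the standard loop $P$) after conjugating by $L$.
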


\begin{proof}
Clearly the equations $\sigma_1-\sigma_1^{-1}=s-s^{-1}$ and $P=c^2$ are special cases of 
\eqref{eq:skeinrel} and \eqref{eq:puncturecross} .

Conversely given any braids $\alpha,\beta, \gamma$ whose diagrams differ in some ball as \[\alpha=\Xor,\beta=\Yor,\gamma=\Ior.\] 

By theorem \ref{thm:brskein} we can write \[L(\alpha-\beta) =(1-\sigma_1^{-2})L\alpha=(\sigma_1-\sigma_1^{-1})\sigma^{-1}L\alpha. \]

Then equation \eqref{eq:Hecke} shows that \[L(\alpha-\beta) =(s-s^{-1})\sigma_1^{-1}L\alpha=(s-s^{-1})L\gamma.\] Hence $\alpha-\beta=(s-s^{-1})\gamma$, and so  $\alpha, \beta$ and $\gamma$ satisfy equation \eqref{eq:skeinrel}.

To deduce equation \eqref{eq:puncturecross} for braids $\delta$ and $\epsilon$ as in theorem \ref{thm:brpuncture} write \[L\epsilon =P^{-1}L\delta\] and apply equation \eqref{eq:basecircle} to get \[L\epsilon =c^{-2}L\delta\] and hence
\[\delta=c^2\epsilon.\] 
\end{proof}

We can now adjoin these relations to Bellingeri's presentation for the braid group of the punctured torus to give a presentation of the algebra $\bsk_n(T^2,*)$.

\begin{theorem}\label{braidpresentation}
The algebra $\bsk_n(T^2,*)$ can be presented by the braids
\[\sigma_1,\cdots,\sigma_{n-1},x_1,y_1,\]
with relations
\begin{eqnarray}
\sigma_i\sigma_j&=&\sigma_j\sigma_i, |i-j|>1\label{eq:braidstart}\\
\sigma_i\sigma_{i+1}\sigma_i&=&\sigma_{i+1}\sigma_i\sigma_{i+1}\\
\sigma_i x_1&=&x_1\sigma_i, i>1\\
\sigma_i y_1&=&y_1\sigma_i, i>1\\
x_1\sigma_1^{-1}x_1\sigma_1^{-1}&=&\sigma_1^{-1}x_1\sigma_1^{-1}x_1\\
y_1\sigma_1y_1\sigma_1&=&\sigma_1y_1\sigma_1y_1\\
x_1^{-1}\sigma_1y_1\sigma_1^{-1}&=&\sigma_1y_1\sigma_1x_1^{-1}\label{eq:braidend}\\
(\sigma_1-s)(\sigma_1+s^{-1})&=&0 \label{eq:skeinone}\\
x_1y_1x_1^{-1}y_1^{-1}&=&c^2\sigma_1\sigma_2\cdots\sigma_{n-1}\sigma_{n-1}\cdots\sigma_2\sigma_1 \label{eq:commutator}
\end{eqnarray}
\end{theorem}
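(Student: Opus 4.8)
The plan is to realize $\bsk_n(T^2,*)$ as a quotient of the group algebra ${\bf Z}[s^{\pm1},c^{\pm1}][B_n]$ of the punctured-torus braid group $B_n$, and then read off the presentation by combining Bellingeri's group presentation with the two skein relations, which the preceding theorem has already reduced to \eqref{eq:Hecke} and \eqref{eq:basecircle}. By Definition \ref{def:bskein}, $\bsk_n(T^2,*)$ is the ${\bf Z}[s^{\pm1},c^{\pm1}]$-span of $n$-braids in the punctured torus modulo the two-sided ideal generated by the local relations \eqref{eq:skeinrel} and \eqref{eq:puncturecross}; this is exactly the group algebra of $B_n$ modulo the skein ideal. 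By the theorem immediately preceding this one, that ideal equals the ideal generated by the single crossing relation \eqref{eq:Hecke} and the single base-string relation \eqref{eq:basecircle}. So it suffices to present the group algebra of $B_n$ and then adjoin these two relations.

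For the group-algebra part I would quote Bellingeri's presentation \cite[Theorem 1.1]{B04} of $B_n$ with generators $\sigma_1,\dots,\sigma_{n-1},a,b$, and perform the change of variables already fixed in the text, namely $a=y_1$, $b=x_1^{-1}$, and $\sigma_i^{-1}$ in place of $\sigma_i$. A relation-by-relation substitution then turns Bellingeri's seven relations into \eqref{eq:braidstart}--\eqref{eq:braidend}: the two purely-braid relations are unchanged up to inverting all $\sigma_i$, the commuting relations $\sigma_i a=a\sigma_i$ and $\sigma_i b=b\sigma_i$ for $i>1$ become the $\sigma_i x_1=x_1\sigma_i$ and $\sigma_i y_1=y_1\sigma_i$ relations, and the three remaining relations become the two relations encoding $x_1x_2=x_2x_1$ and $y_1y_2=y_2y_1$ together with the mixed relation \eqref{eq:braidend} (each obtained by substituting and, where needed, inverting both sides). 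This identifies ${\bf Z}[s^{\pm1},c^{\pm1}][B_n]$ with the ${\bf Z}[s^{\pm1},c^{\pm1}]$-algebra on invertible generators $\sigma_1,\dots,\sigma_{n-1},x_1,y_1$ subject to \eqref{eq:braidstart}--\eqref{eq:braidend}.

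It then remains to rewrite the two extra relations in these generators. The crossing relation \eqref{eq:Hecke}, $\sigma_1-\sigma_1^{-1}=s-s^{-1}$, is equivalent after multiplying by $\sigma_1$ to the quadratic form \eqref{eq:skeinone}; conversely \eqref{eq:skeinone} forces $\sigma_1$ to be invertible with $\sigma_1^{-1}=\sigma_1-(s-s^{-1})$, so the two are interchangeable. For the base-string relation I would use the identity
\[
x_1 y_1 x_1^{-1} y_1^{-1}=\sigma_1\sigma_2\cdots\sigma_{n-1}\,P\,\sigma_{n-1}\cdots\sigma_2\sigma_1
\]
established earlier from the elevation picture of the commutator. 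Substituting $P=c^2$ from \eqref{eq:basecircle} and pulling the central scalar $c^2$ out yields precisely \eqref{eq:commutator}; conversely, since the $\sigma_i$ are invertible, \eqref{eq:commutator} can be solved back for $P$ to recover $P=c^2$. Hence adjoining \eqref{eq:skeinone} and \eqref{eq:commutator} to \eqref{eq:braidstart}--\eqref{eq:braidend} presents $\bsk_n(T^2,*)$.

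The step I expect to be the real content is the one that is \emph{not} carried out here but in the two preceding theorems: reducing the locally-imposed skein relations \eqref{eq:skeinrel} and \eqref{eq:puncturecross}, which a priori must be imposed at every crossing and at every passage across the base string, to the two global relations on $\sigma_1$ and $P$ alone. Once Theorems \ref{thm:brskein} and \ref{thm:brpuncture} supply the conjugating braids $L$ that trade an arbitrary local relation for one at $\sigma_1$ (respectively at $P$), the present theorem is essentially bookkeeping. The only points requiring care are that the presentation must be read with all three families of generators invertible, so that the symbols $\sigma_1^{-1}$, $x_1^{-1}$, $y_1^{-1}$ appearing in the relations make sense, and that the rewriting of $P=c^2$ into \eqref{eq:commutator} is a genuine two-sided equivalence, which relies on the invertibility of the $\sigma_i$ guaranteed by \eqref{eq:skeinone}.
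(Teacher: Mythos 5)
Your proposal is correct and follows essentially the same route as the paper's own proof: invoke Bellingeri's presentation under the substitution $a=y_1$, $b=x_1^{-1}$, $\sigma_i \mapsto \sigma_i^{-1}$ to get \eqref{eq:braidstart}--\eqref{eq:braidend}, then observe that \eqref{eq:Hecke} is equivalent to \eqref{eq:skeinone} and that \eqref{eq:basecircle} is equivalent to \eqref{eq:commutator} via the identity $x_1 y_1 x_1^{-1} y_1^{-1}=\sigma_1\cdots\sigma_{n-1}P\sigma_{n-1}\cdots\sigma_1$, with the genuine content residing in the earlier theorems reducing the local skein relations to these two. The extra care you take with invertibility and the two-sided equivalences is sound but is left implicit in the paper.
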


\begin{proof} 
In our notation Bellingeri's generators are $a=y_1, b=x_1^{-1}$, and our $\sigma_i$ is Bellingeri's $\sigma_i^{-1}$.

Relations \eqref{eq:braidstart} to \eqref{eq:braidend} then present the algebra of $n$-braids in the punctured torus, by \cite{B04}.
Relation \eqref{eq:skeinone} is equivalent to relation \eqref{eq:Hecke}.
Relation \eqref{eq:commutator} is equivalent to the relation \eqref{eq:basecircle}, $P=c^2$, since
\[ x_1 y_1 x_1^{-1} y_1^{-1}=\sigma_1\sigma_2\cdots\sigma_{n-1}P\sigma_{n-1}\cdots\sigma_2\sigma_1. \]
\end{proof}

\begin{remark}
As confirmation that our conventions are consistent with these relations note that with $x_2=\sigma_1^{-1}x_1\sigma_1^{-1}$ and $y_2=\sigma_1y_1\sigma_1$ the relations between the generators $x_1$ and $y_1$ become
$x_1x_2=x_2x_1, y_1y_2=y_2y_1$ and $y_2x_1^{-1}=x_1^{-1}y_2\sigma_1^{-2}$. These relations have already been demonstrated in our illustrations above.
\end{remark}



\begin{theorem}\label{thm:bskiso}
The skein algebra $\bsk_n(T^2,*)$ is isomorphic to the double affine Hecke algebra $\daha_n$.
\end{theorem}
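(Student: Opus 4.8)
The plan is to prove the isomorphism by comparing the presentation of $\bsk_n(T^2,*)$ obtained in Theorem~\ref{braidpresentation} with the presentation of $\daha_n$ recalled in Section~\ref{sec:algebra}, and to exhibit mutually inverse algebra maps between them. The first observation is that the generators $X_j,Y_j$ of $\daha_n$ with $j\ge 2$ are redundant: the relations $X_{i+1}=T_iX_iT_i$ and $Y_{i+1}=T_i^{-1}Y_iT_i^{-1}$ express every $X_j,Y_j$ in terms of $X_1,Y_1$ and the $T_i$. Hence $\daha_n$ is generated by $X_1,Y_1,T_1,\dots,T_{n-1}$, matching the generating set $x_1,y_1,\sigma_1,\dots,\sigma_{n-1}$ of $\bsk_n(T^2,*)$. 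Comparing the quadratic relations $(\sigma_1-s)(\sigma_1+s^{-1})=0$ and $(T_i+t^{1/2})(T_i-t^{-1/2})=0$ fixes the parameter identification $t=s^{-2}$ (so that $t^{1/2}=s^{-1}$), while the puncture relation forces $q=c^{-2}$; I extend the ground-ring isomorphism accordingly.

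Next I would define $\Phi\colon\daha_n\to\bsk_n(T^2,*)$ on generators by $T_i\mapsto\sigma_i$, $X_1\mapsto y_1$, $Y_1\mapsto x_1$, and check that every defining relation of $\daha_n$ holds in $\bsk_n(T^2,*)$. Under this assignment the induced higher generators become $X_j\mapsto y_j$ and $Y_j\mapsto x_j$, precisely because the braid identities $y_{i+1}=\sigma_iy_i\sigma_i$ and $x_{i+1}=\sigma_i^{-1}x_i\sigma_i^{-1}$ established earlier match $X_{i+1}=T_iX_iT_i$ and $Y_{i+1}=T_i^{-1}Y_iT_i^{-1}$ exactly. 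With this in hand most of the DAHA relations are routine: the braid relations and the far-commutativity relations $[T_i,T_j]=0$, $[T_i,X_j]=[T_i,Y_j]=0$ come directly from \eqref{eq:braidstart}--\eqref{eq:braidend}; the relations $[X_i,X_j]=[Y_i,Y_j]=0$ reduce, via the higher-generator formulas, to the two relations $x_1x_2=x_2x_1$ and $y_1y_2=y_2y_1$ recorded in Theorem~\ref{braidpresentation}; the quadratic relation is \eqref{eq:skeinone}; and the mixed relation $X_1^{-1}Y_2=Y_2X_1^{-1}T_1^{-2}$ is the image of the cross relation \eqref{eq:braidend}, equivalently $x_2y_1^{-1}=y_1^{-1}x_2\sigma_1^2$, which was verified diagrammatically in the previous subsection. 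Conversely I would define $\Psi\colon\bsk_n(T^2,*)\to\daha_n$ by $\sigma_i\mapsto T_i$, $x_1\mapsto Y_1$, $y_1\mapsto X_1$ and check the relations of Theorem~\ref{braidpresentation} symmetrically; since $\Phi$ and $\Psi$ are inverse on generators, once both are shown to be well defined they are inverse isomorphisms.

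The one relation that is not a formal consequence of the others is the final DAHA relation $Y_1X_1\cdots X_n=qX_1\cdots X_nY_1$, which must be matched with the commutator relation \eqref{eq:commutator}, namely $x_1y_1x_1^{-1}y_1^{-1}=c^2\sigma_1\cdots\sigma_{n-1}\sigma_{n-1}\cdots\sigma_1=c^2\beta_n$. I expect this to be the main obstacle. The key geometric input is the identity $y_1x_2\cdots x_n=x_2\cdots x_n\,\beta_n\,y_1$ derived earlier by dragging a single strand across braid points $2,\dots,n$, together with its counterpart for $x_1$ moving across $y_2\cdots y_n$. Combining such a drag identity with the commutator relation, and with the fact that the full symmetric product $y_1\cdots y_n$ (respectively $x_1\cdots x_n$) is invariant under conjugation by the pure braid $\beta_n$, one rewrites $x_1(y_1\cdots y_n)$ as a scalar multiple of $(y_1\cdots y_n)x_1$, the scalar collecting exactly the single factor of $c^{\pm 2}$ picked up as string $1$ is pushed once around the puncture; under $q=c^{-2}$ this is exactly the final DAHA relation. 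The delicate points are the orientation bookkeeping---the precise power of $\beta_n$ and whether the accumulated puncture factor is $c^2$ or $c^{-2}$---and verifying the $\beta_n$-invariance of the symmetric product; these are where the geometry must be tracked carefully rather than formally.

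Finally, since all defining relations of each algebra hold under the corresponding map, both $\Phi$ and $\Psi$ are well-defined algebra homomorphisms, and as they invert each other on generators they furnish the desired isomorphism $\bsk_n(T^2,*)\cong\daha_n$.
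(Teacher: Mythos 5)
Your overall strategy --- compare the presentation of Theorem~\ref{braidpresentation} with the presentation of $\daha_n$ and exhibit mutually inverse maps on generators --- is exactly the paper's, but your specific assignment contains a genuine error: the parameter identification $q=c^{-2}$ is incompatible with your swapped generator assignment $T_i\mapsto\sigma_i$, $X_1\mapsto y_1$, $Y_1\mapsto x_1$. You can see this already at $n=1$: there the final DAHA relation reads $Y_1X_1=qX_1Y_1$, which under your map becomes $x_1y_1=c^{-2}y_1x_1$, i.e.\ $x_1y_1x_1^{-1}y_1^{-1}=c^{-2}$; but relation \eqref{eq:commutator} (with $\beta_1$ the empty braid) says $x_1y_1x_1^{-1}y_1^{-1}=c^{+2}$. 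More generally, what actually holds in $\bsk_n(T^2,*)$ is $x_1(y_1\cdots y_n)=c^{+2}(y_1\cdots y_n)x_1$: for instance, apply the automorphism $\theta=\tau_1\tau_2^{-1}\tau_1$ of $\bsk_n(T^2,*)$, which fixes the $\sigma_i$ and sends $x_i\mapsto y_i^{-1}$, to the relation $y_1x_1\cdots x_n=c^{-2}x_1\cdots x_ny_1$. So with your swap $X\leftrightarrow y$, $Y\leftrightarrow x$, $T_i\mapsto\sigma_i$ the forced scalar identification is $q\mapsto c^{+2}$ (together with $t=s^{-2}$, which you did get right), not $q\mapsto c^{-2}$; as stated, neither $\Phi$ nor $\Psi$ is a well-defined algebra homomorphism. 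Notably, you yourself flagged ``whether the accumulated puncture factor is $c^2$ or $c^{-2}$'' as a delicate point to be settled later; that unresolved sign is precisely where the argument fails, so this is not a cosmetic slip but the crux of the proof left open, and it resolves against your stated identification.

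The repair is either to keep your swapped assignment and change the scalars to $q\mapsto c^{2}$, $t\mapsto s^{-2}$, or (as the paper does) to use the unswapped assignment $x_1\mapsto X_1$, $y_1\mapsto Y_1$, $\sigma_i\mapsto T_i^{-1}$ with $s^2\mapsto t$, $c^2\mapsto q^{-1}$. Either way you still owe a complete verification of the one non-formal relation, and here the paper's route is more efficient than your sketch: rather than verifying the image of $Y_1X_1\cdots X_n=qX_1\cdots X_nY_1$ inside the skein algebra by dragging arguments (which would require ``dual'' drag identities for $x_1$ across $y_2\cdots y_n$ that are never established in the paper, plus commutation properties of $\beta_n$), the paper verifies the image of the skein relation \eqref{eq:commutator} inside $\daha_n$: using the geometric identity $y_1x_2\cdots x_n=x_2\cdots x_n\beta_ny_1$ it rewrites \eqref{eq:commutator} as $c^{-2}x_1y_1x_1^{-1}=(x_2\cdots x_n)^{-1}y_1x_2\cdots x_n$, whose DAHA image $qX_1Y_1X_1^{-1}=(X_2\cdots X_n)^{-1}Y_1X_2\cdots X_n$ follows at once from the final DAHA relation and the commutativity of the $X_i$. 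If you adopt that direction of verification, the sign bookkeeping you deferred reduces to a one-line algebraic check.
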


\begin{proof}
We construct inverse homomorphisms between the two algebras.

\begin{itemize}
\item
 Define a homomorphism from $\bsk_n(T^2,*)$ to $\daha_n$ by
sending $x_1,y_1,\sigma_i$ to $X_1,Y_1, T_i^{-1}$ and $s^2,c^2$ to $t,q^{-1}$.

 To show that  this gives a homomorphism it is enough to check that the relations in the presentation of $\bsk_n(T^2,*)$ hold after the assignment of generators in $\daha_n$. 

The only relation for which this is not immediately clear is relation \eqref{eq:commutator} in  $\bsk_n(T^2,*)$. 
Relation \eqref{eq:commutator} can be written \[x_1y_1x_1^{-1}
=c^2 \beta_n y_1. \]
 We also know that 
\[y_1x_2\cdots x_n=x_2\cdots x_n \beta_n y_1.\] 
The relation can then be rewritten as 
\[c^{-2}x_1y_1x_1^{-1}=(x_2\cdots x_n )^{-1}y_1x_2\cdots x_n .\]

In our assignment to $\daha_n$ we can see that each $x_i$ is sent to $X_i$. It is then enough to check that  \[qX_1Y_1X_1^{-1}=(X_2\cdots X_n )^{-1}Y_1X_2\cdots X_n\] in $\daha_n$. This follows immediately from the last relation for $\daha_n$ and the fact that the elements $X_i$ all commute.  

\item
We can  define an inverse homomorphism  from $\daha_n$ to $\bsk_n(T^2,*)$ by sending $X_i, Y_i, T_i$ to $x_i,y_i,\sigma_i^{-1}$ and $t,q$ to $s^2,c^{-2}$. 
Our illustrations above confirm that the relations from $\daha_n$ hold in  $\bsk_n(T^2,*)$ after this assignment. 


\end{itemize}
\end{proof}

\subsection{Isotopies of relations}\label{sec:isorels}

  In this section we prove Theorems \ref{thm:brskein} and \ref{thm:brpuncture}. We have   reformulated the proofs so as to apply a theorem of Yi Ni, \cite[Theorem 1.1]{Ni11}, about Dehn surgery on a knot in a $3$-manifold $F\x I$ where $F$ is a surface with boundary.

   Dehn surgery on a curve $K$ in a $3$-manifold $M$ is the operation of removing a solid torus neighbourhood of $K$ and regluing a solid torus in its place, with a specification, in general by a rational number, of how the meridian of the solid torus is glued back.  When the surgery curve $K$ spans a disc in $M$ the manifold resulting from $\pm 1/n$ surgery on $M$ is homeomorphic to the original manifold $M$, and can be viewed as cutting open $M$ along the spanning disc and then regluing after making $n$ full twists on the disc.
   
Ni shows that when Dehn surgery on a curve $K$ in the product $M=F\x I$ results in a manifold $N$ which is homeomorphic  to $M$ in a controlled way then $K$ must lie in a very simple way in the product.  
  
   \begin{proof} [Proof of theorem \ref{thm:brskein}] We are given   diagrams for $\alpha, \beta$ and $\gamma$ which differ as shown inside a ball $D$.  We say that $\beta$ and $\gamma$ are given by \emph{switching} or \emph{smoothing} $ \alpha$ in $D$.
   
   We can specify the ball by including its oriented equator $K$  in the diagram of $\alpha$, to make a diagram $\alpha \cup K$ which includes $K$ as  a closed component. We call $\alpha \cup K$ a \emph{switching configuration} for $\alpha$ and $K$ the \emph{switching curve}, as shown in figure \ref{fig:switchingconfig}.  
  \begin{figure}[ht]
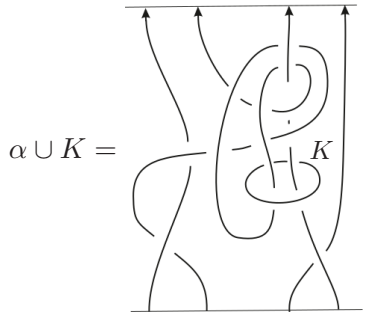

   \bc $\alpha\cup K =\labellist\small
\pinlabel{$K$} at 204 170
\endlabellist\switchconfig $ \ec
\caption{A switching configuration for a braid $\alpha$.} \label{fig:switchingconfig}
\end{figure}

   Using the switching configuration $\alpha\cup K$ has the advantage that the whole diagram can be manipulated by isotopy, while still allowing us to recover diagrams for $\beta$ and $\gamma$ as follows.
   The switching curve  $K$ spans an embedded $2$-disc which is crossed by $\alpha$ in just two points, in the same direction.  We can retrieve the ball $D$ where the switch and smooth takes place  as a neighbourhood of this disc, while the switched and smoothed diagrams  $\beta$ and $\gamma$ are given by replacing the two arcs crossing the neighbourhood  of the disc with  two other arcs as follows. 
   
   A switching curve $K$ around $\alpha$ is shown locally here, \bc $ \alpha\cup K \quad =\quad \labellist\small
\pinlabel{$K$} at 150 90
\endlabellist\switchcurve $\ . \ec  The switched  and the smoothed diagrams, in the neighbourhood of the disc spanning $K$, are then
   \bc $\beta\quad = \quad \switch, \qquad \gamma \quad = \quad \smooth $.\ec

  The effect of the switching operation which gives $\beta$ from $\alpha$ can then be described as performing $\pm 1$ Dehn surgery on the switching curve $K$, since it corresponds to making   a full twist on the strings crossing the disc which spans $K$. This observation means that a switching curve  is an important feature when considering the application of Ni's theorem.

  We can isotop a given switching configuration \[\alpha\cup K =\labellist\small
\pinlabel{$j$} at 220 00
\pinlabel{$i$} at 90 00
\pinlabel{$K$} at 204 170
\endlabellist\switchconfig \]\\[2mm] to push the switching curve to the bottom of the braid. To do this easily we first straighten out the braid strings to form the identity braid $\iota$,  by looking at 
  \[(\alpha\cup K)\alpha^{-1} = \iota \cup K'=\labellist\small
\pinlabel{$j$} at 237 0
\pinlabel{$i$} at 95 0
\pinlabel{$K'$} at 300 180
\endlabellist\identityconfig\ .\]  We can then redraw \[\alpha\cup K =\switchconfig\quad =\quad \labellist\small
\pinlabel{$j$} at 237 0
\pinlabel{$i$} at 95 0
\pinlabel{$\iota\cup K'$} at 290 125
\pinlabel{$\alpha$} at 275 300
\endlabellist\compositeconfig \qquad=(\iota\cup K')\alpha.\] \\[2mm] In this isotopic switching configuration the switching curve has been moved down to $K'$. After switching we  get \[\beta =\beta'\alpha\] where $\beta'$ is the result of switching the identity braid using the switching curve $K'$.  It follows that $\beta'=\beta\alpha^{-1}$ is then also a braid.

 We can now look at $K'$ in plan view as a knot diagram in $T^2-\{1,\ldots,n,*\}$.
     The diagram of $K'$  encircles points $i$ and $j$, where the original switching takes place between strings $i$ and $j$ of $\alpha$. 
    \begin{remark}
    We must have $i\ne j$ here, otherwise the smoothed curve $\gamma$ will have a closed component, and could not be itself a braid.
    \end{remark}
     The plan view of $K'$ will then    look something like \[\labellist\small
\pinlabel{$j$} at 237 100
\pinlabel{$i$} at 90 100
\pinlabel{$K'$} at 05 60
\endlabellist\switchplan\]
  in general.   The result of switching, $\beta'$, can be viewed in this projection  by taking the point $i$ through the  projected spanning disc and moving it out to the curve $K'$, then once round $K'$ and  back to the point $i$, while leaving the other strings alone, as indicated here,
  \[\labellist\small
  \pinlabel{$j$} at 237 100
\pinlabel{$i$} at 110 100
\endlabellist\switcheddiagram.\]   
   
  If there  are crossing points in the diagram of $K'$, as here,    the resulting  diagram of $\beta'$ seems unlikely in general to be the projection of a braid since the crossings for the projection of a braid will all appear as undercrossings at their first encounter round $K'$.
  Although it is possible that this could be realised after an isotopy we can  
   apply Ni's theorem \cite[Theorem 1.1] {Ni11} to prove the following result. 
  \begin{theorem} \label{th:switchcurve}
  If $\iota \cup K'$ is a switching configuration for the identity braid which gives rise to a braid after switching then  up to isotopy the diagram of $K'$ in $T^2-\{1,\ldots,n,*\}$ has no crossing points. 
  \end{theorem}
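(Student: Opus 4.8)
The plan is to realise the switching curve $K'$ as a knot in the complement of the identity braid and then feed this into Ni's Dehn surgery theorem \cite[Theorem 1.1]{Ni11}. First I would pass from $T^2 \times I$ to the braid complement $M := (T^2 \times I)\setminus N(\iota \cup (\{*\}\times I))$, obtained by deleting open neighbourhoods of the $n$ identity braid strings and of the base string. Since a braid is a path in the configuration space of points of $T^2\setminus\{*\}$, its complement fibres over $I$ with fibre $F := T^2\setminus\{1,\ldots,n,*\}$, and this fibration is trivial because $I$ is contractible; hence $M \cong F\times I$, where $F$ is compactified to a compact surface with non-empty boundary by replacing each puncture with a small boundary circle. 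In this model $K'$ is a genuine knot in the interior of $F\times I$, and, because $i\neq j$ (the Remark above), the disc it spans meets the braid in exactly the two coherently oriented strings $i$ and $j$.

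Next I would recognise switching as surgery: making a full twist on the two strands crossing the spanning disc of $K'$ is precisely $\pm1$ Dehn surgery on $K'$ with the framing induced by that disc, as already noted before the statement. By hypothesis the switched braid $\beta' = \beta\alpha^{-1}$ is again a braid, so the surgered manifold is once more the complement of a braid in $T^2\times I$ and is therefore again homeomorphic to $F\times I$. Crucially, this homeomorphism is of the controlled type Ni requires: braids agree rel endpoints with $\iota$, so it is the identity on $F\times\partial I$ and carries each boundary torus coming from a string or from the puncture to itself. Thus the $\pm1$ surgery on $K'$ is a cosmetic surgery on $F\times I$ preserving the product boundary structure.

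With these hypotheses in place I would invoke \cite[Theorem 1.1]{Ni11}: a knot in $F\times I$ admitting such a boundary-controlled cosmetic surgery must sit in the product in the simplest possible way, namely it is isotopic within $F\times I$ to a curve lying in a single horizontal level $F\times\{t_0\}$. Projecting such a horizontal curve to $F\subset T^2\setminus\{1,\ldots,n,*\}$ yields an embedded curve, and the ambient isotopy in $F\times I$ descends to an isotopy of plan views avoiding the braid points and the base point; hence the plan-view diagram of $K'$ can be made free of crossings, which is exactly the assertion of Theorem \ref{th:switchcurve}.

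The main obstacle is matching the precise hypotheses and conclusion of Ni's theorem. One must verify that the homeomorphism between the two braid complements respects the full boundary data (both ends $F\times\{0\}$, $F\times\{1\}$ and every boundary torus) in the exact sense Ni demands, and that the disc-induced surgery slope is the integral slope to which his statement applies. One must also confirm that his ``simple position'' conclusion genuinely produces a horizontal curve rather than some other degenerate configuration, ruling out the non-horizontal alternatives for the closed surgery curve; this is where the specific geometry of a disc meeting the braid in exactly the two coherently oriented points $i$ and $j$ is needed.
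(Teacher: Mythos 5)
Your strategy coincides with the paper's: pass to the exterior of $\iota$ (together with the base string), identify it with $F\times I$ where $F$ is $T^2$ minus $n+1$ discs, observe that switching is $\pm 1$ Dehn surgery on $K'$ whose result is the exterior of the braid $\beta'$ and hence again homeomorphic to $F\times I$ in a level-preserving way, and then invoke Ni's theorem. Up to that point the two arguments agree in every essential respect.

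The gap is in what you take Ni's theorem to conclude. You assert that \cite[Theorem 1.1]{Ni11} forces $K'$ to be isotopic to a curve lying in a single level $F\times\{t_0\}$, so that its projection is embedded. Ni's conclusion, as used in the paper, is weaker: it only guarantees that $K'$ has a diagram in $F$ with $0$ \emph{or} $1$ crossing. The one-crossing alternative is not a degenerate case that can be deferred; it is a genuine outcome of Ni's theorem, and the statement you are proving is precisely the assertion that it does not occur here. The paper rules it out in one line: the spanning disc of $K'$ meets the two strings of $\iota$ in the same sense (both strands of a braid are ascending, and $i\neq j$ by the preceding Remark), and this coherence of intersections is incompatible with a one-crossing diagram. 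You do point at exactly this ingredient in your closing paragraph, but you leave it as an unverified ``obstacle'' rather than an argument, so the proposal as written establishes the theorem only modulo the step that separates the $0$-crossing conclusion you want from the $1$-crossing possibility that Ni's theorem allows.
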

   \begin{proof}
  By removing a neighbourhood of each of the braid strings from $\iota$ we can regard $K'$ as a knot in the product manifold $F\x I$ with $F=T^2-(n+1)\  \mathrm{discs}$. We know that $\pm1$ Dehn surgery on $K'$ gives the exterior of the braid $\beta'$. Now any braid exterior is  homeomorphic to $F\x I$ by a level-preserving homeomorphism. Theorem 1.1 of \cite{Ni11} shows that $K'$ then has a diagram in $F$ with $0$ or $1$ crossing. The possibility of $1$ crossing can be excluded, since the spanning disc for $K'$ meets two strings of $\iota$ in the same sense. Consequently after isotopy the projection of $K'$ is  an embedded curve $C\subset F$ which bounds a $2$-disc in $F$ containing just two braid points.
  \end{proof}
  
  It is convenient to associate with any such  embedded curve $C\subset F$ a switching configuration $\iota\cup C^+$ for the identity braid, using $C^+=C\x\frac{1}{2}\subset F\x I$ as switch curve. For example the standard curve $C_{1\,2}$ around the braid points $1$ and $2$ gives the basic switching configuration
   \[\iota\cup C_{1\,2}^+ \quad = \quad \labellist\small
\pinlabel{$C_{1\,2}^+$} at 170 80
\pinlabel{$1$} at 30 50
\pinlabel{$2$} at 120 50
\endlabellist\standardswitchcurve  \]
  We know that switching with this particular configuration gives rise to  the braid $\sigma_1^{-2}$, and smoothing gives $\sigma_1^{-1}$. Similarly the standard embedded curve $C_{n\, *}$ around the braid point $n$ and the base point $*$ gives rise to the braid $P^{-1}$ after switching.
  
  By Theorem \ref{th:switchcurve} the projection $C$ of $K'$ is an embedded curve in $T^2$, which bounds a $2$-disc in $T^2$  containing two braid  points $i$ and $j$, for example
  \[C= \labellist\small
\pinlabel{$j$} at 190 100
\pinlabel{$i$} at 90 100
\endlabellist\embeddeddiagram \ ,\]  and hence we can write $\iota\cup K' =\iota\cup C^+$.
 
  We can  find an isotopy of $C\cup \{1,\ldots,n\}$ in $T^2-\{*\}$ which moves $C$ to the  standard curve $C_{1\,2}$ around points $1$ and $2$, and carries the set of braid points to itself. Such an isotopy can be constructed by first moving the points $i$ and $j$ close together within $C$ and then moving the whole disc to standard position,  and finally moving the remaining braid points.  The reverse of this isotopy  provides a braid $L$ with the property that \[(\iota\cup C^+_{1\,2})L =L(\iota\cup C^+)\] up to isotopy,  so in this sense $L$ intertwines the curve $C^+$ with the standard curve $C^+_{1\, 2}$.  
  As a result we have isotopic switching configurations for the braid $L\alpha$,
   \[  L(\alpha\cup K)=L(\iota\cup C^+)\alpha=(\iota\cup C^+_{1\,2})L\alpha\ , \] 
   in which the switching curve $K$ is moved down successively from $\alpha$ through $C$ to become the standard curve $C^+_{1\,2}$ at the bottom.
   
  By switching this configuration we have
  \[L\beta =\sigma_1^{-2}L\alpha \] 
  and by smoothing we get \[ L\gamma =\sigma_1^{-1}L\alpha ,\]
  as required for theorem \ref{thm:brskein}.

  \end{proof}
  
  \begin{proof} [Proof of theorem \ref{thm:brpuncture}] The braid $\epsilon$ arises from $\delta$ by a switch using a switching curve $K$ around the base string and string $i$.  As in the  proof of theorem \ref{thm:brskein} we can write $\delta\cup K = (\iota\cup K')\delta$ up to isotopy and apply  Theorem \ref{th:switchcurve} to show that $K'$ projects to an embedded curve $C$ in $T^2$  around the two points $i$ and $*$. This time find an isotopy of $T^2$ which carries $C$ to the standard curve $C_{n\, *}$ around the points $n$ and $*$, and use it provide a braid $L$ such that
  \[ L(\delta\cup K)=L(\iota\cup C^+)\delta =(\iota\cup C^+_{n\, *})L\delta \ .\]
  
 In this switching configuration for $L\delta$ use either $K$ or the isotopic curve $C_{n\,*}$ to switch. Switching $\iota\cup C^+_{n\, *}$ gives the braid $P^{-1}$ and switching $\delta\cup K$ gives $\epsilon$. Hence switching the whole configuration gives
  \[L\epsilon=P^{-1}L\delta \ ,\] as required for Theorem \ref{thm:brpuncture}. 
  
    \end{proof}

\section{The tangle skein algebra}
In this section we generalize the definition of the braid skein algebra using framed tangles, and we conjecture that this produces the same algebra as the braid skein algebra. The multiplication in the tangle skein algebra is the same as in the previous section, and comes from stacking in the $[0,1]$ direction. In the next section we use this conjecture to relate the classical skein algebra of closed links in the punctured torus to the elliptic Hall algebra.

In Homflypt skein theory we consider  oriented \emph{banded} curves in a $3$-manifold $M$, possibly with marked input and output points on its boundary.

Here are some such pieces 
\bc
\Idorband \qquad  \Xorband \qquad  \rcurlorband
\ec

We can think of these as made of flat tape rather than rope.
The only difference from rope is that the tapes can have extra twists in them such as
\bc\Idorleftband \qquad or \quad \Idorrightband\ec

 Twists may be dealt with by drawing little kinks in the diagram, replacing \bc\Idorrightband \qquad by \quad \rcurlorband\ec   and \bc\Idorleftband \qquad  by \quad \lcurlorband\ec

When there are boundary points the curves will include oriented arcs joining input to output points. In addition we can have some closed oriented curves.

The general Homflypt skein $\sk(M)$ is defined to be $\Z[s^{\pm1}, v^{\pm1}]$-linear combinations of banded links, up to isotopy, with the basic linear relations
\begin{align*}
	\Xorband\  -\ \Yorband \qquad &=\qquad{(s-s^{-1})}\quad\ \Iorband  \\
	\rcurlorband \qquad&=\qquad {v^{-1}}\quad \Idorband 
\end{align*}
between banded links whose diagrams differ only locally as shown.


Special cases of  interest to us are  where $M=F\x I$ for a surface $F$, with or without boundary. In such cases we write $\sk(F)$ for the skein $\sk(M)$, which has the structure of an algebra, with product induced by stacking curves in the direction of the interval $I$.

In \cite{MS17} we have looked at the case where $F=T^2$, and given a presentation for $\sk(T^2)$. 

The case $\CC=\sk(A)$, where $A$ is the annulus, is a commutative algebra. It has been widely studied,  originally by Turaev \cite{Turaev}, and subsequently by Morton and others.

In our present work we will incorporate the skein of  the torus with one hole, $\sk(T^2 \oursetminus  D^2)$, including elements which map to the generators of $\sk(T^2)$ under the homomorphism induced by the inclusion $T^2\oursetminus D^2 \to T^2$.

Again in the case $M=F\x I$ we will consider the case where we fix $n$ input points in $F\x \{0\}$, and take the corresponding $n$ output points in $F\x \{1\}$. Stacking in the $I$ direction will give  this skein the structure of an algebra over $\Z[s^{\pm1}, v^{\pm1}]$ which we denote by $\sk_n(F)$.

The simplest case of this, when $F=D^2$, gives the algebra $\sk_n(D^2)$. This algebra is a version of the Hecke algebra $H_n(z)$ of type $A$, based on the quadratic relation $\sigma_i^2=z\sigma_i +1$, where $z=s-s^{-1}$.

In anticipation of the next section we are led to consider the skein $\sk_n(T^2 \oursetminus \{*\})$ of the  punctured torus. In order to incorporate our algebra $\bsk_n(T^2,*)$ into this framework we will adjoin the relation
\bc
\labellist\small
\pinlabel{$*$} at 146 740
\endlabellist\Xor$\quad=\quad c^2\ $
\labellist\small
\pinlabel{$*$} at 146 740
\endlabellist\Yor
\ec
to allow a string to cross through the fixed string $\{*\}\x I$ in $T^2\x I$ which defines the puncture.
With this extra relation in place we  use the notation $\sk_n(T^2,*)$ for the resulting algebra over $\Z[s^{\pm1}, v^{\pm1}, c^{\pm 1}]$.

\begin{theorem}\label{thm:dahatoskein} There is an algebra homomorphism 
	\[F_n: \bsk_n(T^2,*)\cong \ddot H_n\to \sk_n(T^2,*).\]
\end{theorem}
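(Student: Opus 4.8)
The plan is to define $F_n$ directly on the presentation of $\bsk_n(T^2,*)$ given in Theorem \ref{braidpresentation}, sending each generating braid to the framed tangle obtained by equipping it with a globally consistent framing, and then to check that the defining relations survive in $\sk_n(T^2,*)$.

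First I would fix a nonvanishing tangent vector field $V$ on $T^2$; such a field exists because $\chi(T^2)=0$, and concretely one may take the image of a constant field on $\mathbb{R}^2/\mathbb{Z}^2$, which descends across the edge identifications of the plan view. Given any $n$-braid $b$ in the punctured torus I would frame it by pushing each strand off itself a small distance in the direction of $V$; since $V$ is nowhere zero and globally defined, this produces a well-defined framed tangle $\widetilde b \in \sk_n(T^2,*)$, and the construction is compatible with stacking, hence multiplicative. Together with the coefficient inclusion $\mathbf{Z}[s^{\pm1},c^{\pm1}] \hookrightarrow \mathbf{Z}[s^{\pm1},v^{\pm1},c^{\pm1}]$ this assigns images to the generators $\sigma_1,\dots,\sigma_{n-1},x_1,y_1$, and it remains to verify that $F_n$ respects the relations \eqref{eq:braidstart}--\eqref{eq:commutator}.

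The braid-group relations \eqref{eq:braidstart}--\eqref{eq:braidend} are realized by isotopies of unframed braids in $(T^2 \oursetminus \{*\})\times I$; each such isotopy lifts to an isotopy of $V$-framed tangles, the framing being carried along continuously by $V$, so both sides represent isotopic framed tangles and the relations hold in $\sk_n(T^2,*)$. The Hecke relation \eqref{eq:skeinone}, equivalently $\sigma_1-\sigma_1^{-1}=(s-s^{-1})$, is precisely the framed skein relation \eqref{eq:skeinrelintroc} applied to the $V$-framed crossing $\widetilde\sigma_1$, since all three tangles there carry the same local framing. Finally, the commutator relation \eqref{eq:commutator} is deduced exactly as in the proof of Theorem \ref{braidpresentation}: the identity $x_1y_1x_1^{-1}y_1^{-1}=\sigma_1\cdots\sigma_{n-1}P\sigma_{n-1}\cdots\sigma_1$ is a framed braid isotopy, and the base-string crossing relation \eqref{eq:puncturecross}, which is imposed in $\sk_n(T^2,*)$, gives $P=c^2$.

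The step I expect to be the main obstacle is the framing bookkeeping, namely confirming that the $V$-framing is consistent enough that each relation holds on the nose without spurious powers of $v$. The point to be made carefully is that every isotopy used above can be taken to preserve the push-off direction $V$, so no strand is ever forced to acquire a net twist relative to $V$; consequently the change-of-framing relation \eqref{eq:skeinrelintrof} is never triggered and no factor of $v^{\pm1}$ intervenes. Once this consistency is established for each relation of the presentation, $F_n$ is a well-defined algebra homomorphism.
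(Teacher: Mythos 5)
Your proposal is correct and takes essentially the same approach as the paper: the paper's proof also frames each braid by pushing off in a fixed direction in $T^2$ (the $(1,0)$ direction, i.e.\ a constant nonvanishing vector field), observes that braid isotopies carry these bands along, and concludes that the relations of $\bsk_n(T^2,*)$ hold among the resulting framed tangles. Your more explicit verification of the relations from Theorem \ref{braidpresentation}, including the point that no spurious powers of $v$ arise, is simply a fuller write-up of the same argument.
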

\begin{proof} The homomorphism $F_n$ is defined on a braid by making a consistent choice of framing for it.
Braids in $T^2$ can be framed by fixing a direction in $T^2$, say  the $(1,0)$ direction, and taking the band on each braid string in this direction, which is  transverse to the string at all points. This appears to give the framing used in \cite{BWPV14}.  Under any braid isotopy the bands will be preserved, and the relations between braids will satisfy the skein relations between banded curves.
\end{proof}

In this section and the next we consider the algebra $\sk_n(T^2,*)$, which incorporates general framed tangles along with closed curves, with the elliptic Hall algebra and its relation to the algebras $\ddot H_n$ in mind. One outcome of this is the following result, which will be established in the next section.



	\begin{theorem} The homomorphism \[F_n: \bsk_n(T^2,*)\to \sk_n(T^2,*)\]
	is surjective. \label{surjection}
	\end{theorem}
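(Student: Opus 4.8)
The plan is to show that every framed tangle in $\sk_n(T^2,*)$ can be rewritten, modulo the skein relation \eqref{eq:skeinrel}, the change-of-framing relation \eqref{eq:skeinrelintrof}, and the base-string relation \eqref{eq:puncturecross}, as a $\Z[s^{\pm1},v^{\pm1},c^{\pm1}]$-linear combination of framed braids, i.e.\ of elements in the image of $F_n$. Since that image is a subalgebra, it suffices to treat a single framed tangle $T$ built from $n$ oriented arcs together with some closed curves. First I would normalize the framing: using \eqref{eq:skeinrelintrof}, each band can be given the standard framing coming from the fixed vector field on $T^2$ used in Theorem \ref{thm:dahatoskein}, at the cost of scalar powers of $v$. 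From now on I work with blackboard-framed diagrams, so the problem is reduced to the underlying unframed tangle.

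The second step is to make the $n$ arcs monotone in the $I$-direction, i.e.\ to turn them into a braid. After choosing a generic projection, each arc has finitely many critical points for the height function, and I would cancel maximum--minimum pairs by isotopy in the thickened torus, using \eqref{eq:skeinrel} to switch any crossing that blocks such an isotopy and \eqref{eq:puncturecross} to pass an arc across $\{*\}\times I$. Each move replaces $T$ by braid-like terms together with strictly simpler tangles, in which a turnback has either been straightened or pinched off into an extra closed curve. This is the standard ``combing'' reduction in skein theory; carried out by induction on the total number of critical points, it expresses $T$ as a linear combination of terms, each of which is a genuine braid $\beta$ together with a collection of closed curves.

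The heart of the proof is the third step: absorbing the closed curves into the braid. Applying \eqref{eq:skeinrel} to self-crossings and mutual crossings among the closed curves reduces their number and simplifies them, and applying it to a crossing between a closed curve and a braid strand trades that crossing for the oriented smoothing, which \emph{merges} the curve into the strand (increasing its homology class) and leaves a crossing-switched term with one fewer interaction. Iterating, I may assume each remaining closed curve $C$ is a split simple closed curve of some homology class $\xx=(a,b)$, disjoint from the braid and the base string. Now let $\beta_{a,b}$ be the braid in which string $n$ traverses the class $(a,b)$ and the other strings are trivial; it lies in the image of $F_n$. I would isotope $\beta_{a,b}$ so that string $n$ almost closes up into a loop of class $(a,b)$ before running to its endpoint, and then resolve the resulting self-crossing with \eqref{eq:skeinrel}: the oriented smoothing pinches the loop off as the split curve $C$ and leaves a trivial strand $\iota$, while the two crossed resolutions are each $\beta_{a,b}$ up to a local kink. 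This yields an identity of the shape
\[
[\beta_{a,b}]_{+} - [\beta_{a,b}]_{-} = (s-s^{-1})\,[\,\iota\sqcup C\,],
\]
so that $C$ is expressed through braids. The point where the topology of the punctured torus is essential lies exactly in relating the two crossed resolutions: passing between them slides the loop, equivalently the curve $C$, from one side of the base string to the other, which by \eqref{eq:puncturecross} costs only a power of $c^{2}$ together with lower-order braid terms. This is precisely the move ``a closed curve on one side of the puncture is the same curve on the other side,'' valid on the torus but not on a general surface, and it is what makes $F_n$ surjective here.

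I expect the main obstacle to be the bookkeeping that makes the induction in the third step terminate. Each absorption produces lower-order terms that may themselves contain closed curves (now linked with the braid or crossing the base string), so I would need a complexity measure --- for instance the lexicographically ordered triple consisting of the number of closed components, then the total number of crossings of the curves with themselves and with one another, and finally the number of times the curves meet the base string --- that strictly decreases under every application of \eqref{eq:skeinrel} and \eqref{eq:puncturecross} used above. Granting such a measure, a double induction (on critical points in step two, then on this complexity in step three) reduces an arbitrary framed tangle to a combination of braids and establishes the surjectivity of $F_n$.
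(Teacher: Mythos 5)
Your overall skeleton --- comb the tangle into a braid together with closed curves, then absorb the closed curves into braids using the torus-specific fact that a curve can be slid around the torus through the base string --- is the same as the paper's. But the heart of your third step contains a genuine error. You claim that in the lasso diagram for $\beta_{a,b}$ ``the two crossed resolutions are each $\beta_{a,b}$ up to a local kink.'' Only one of them is: a monotone strand forces the later arc to pass \emph{over} the earlier one at every self-crossing of its projection, so exactly one resolution is a braid diagram, and the other resolution cannot be simplified by a Reidemeister I move, because the loop of that ``kink'' is an essential curve of class $(a,b)$ rather than the boundary of a disc. Indeed, if both resolutions were braids up to kinks, your displayed identity would already hold in the skein $\sk_1(A)$ of the annulus, with no base string present at all, giving $(v^{-1}-v)\,x_1=(s-s^{-1})[\iota\sqcup C]$; applying the gluing map $\sk_1(A)\to\sk(T^2)$ obtained by identifying the top and bottom of $A\times I$, this would force $(v^{-1}-v)W_{(1,1)}=\pm(s-s^{-1})W_{(1,0)}W_{(0,1)}$ in $\sk(T^2)$, contradicting the presentation of $\sk(T^2)$ in \cite{MS17}, where the generator $W_{(1,1)}$ and the monomial $W_{(1,0)}W_{(0,1)}$ are linearly independent. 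So the entire content of the theorem sits in the step you dispatch in one sentence (``passing between them slides the loop \ldots costs only a power of $c^2$ together with lower-order braid terms''): one must actually sweep the closed curve once around the torus, across the base string (factor $c^{2m}$) and across each of the $n$ strands (skein correction terms), and verify that the corrections are braids. That computation is precisely the paper's equation \eqref{eq:Wasbraid}, $(1-c^{2m})W_{(m,0)}=(s^m-s^{-m})\sum_i x_i^m$, which is where the proof really lives.

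A second gap: your reduction ``I may assume each remaining closed curve is a split simple closed curve'' is impossible whenever the curve's homology class is not primitive. A closed curve winding twice around the $(1,0)$ direction cannot be isotoped to an embedded one, and switching its self-crossing produces the descending winding-two curve --- not a simpler object --- so your complexity induction cannot terminate with simple curves only; by Turaev's theorem \cite{Turaev} the ascending winding-$m$ curves are free polynomial generators of $\sk(A)$ and are not expressible through simple curves. The paper handles this by introducing the decorated curves $W_{m\zz}$ (an embedded curve of primitive class $\zz$ carrying the power sum $P_m$), for which the push-through formula above holds with coefficient $s^m-s^{-m}$, and by an induction on crossing number and on the geometric intersection with a reference curve $Z$ (Lemmas \ref{ascending} and \ref{diversion}) that writes an arbitrary closed curve as a polynomial in the $W_\xx$ modulo the image of $F_n$. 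Finally, note that even where your argument works it only places $(s-s^{-1})[\iota\sqcup C]$ in the image of $F_n$, so a localization of the coefficient ring is implicitly invoked; the paper's \eqref{eq:Wasbraid} has the analogous issue with $1-c^{2m}$, so this last point is shared, but the two gaps above are yours alone and each would need a real argument to close.
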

	\begin{proof}
	
	We must look at elements  of the skein $\sk_n(T^2,*)$ and reduce them to combinations of braids. In particular we have to deal with closed curves as well as braids. 
	
		Choose a disc $D^2$ in $T^2$ which contains the $n$ braid points and the puncture $*$. A suitable choice is a neighbourhood of the line through the braid points $1,\cdots,n$ and the base point $*$, shown in figure \ref{disc}.
		The inclusion of $T^2-D^2$ in $T^2$, combined with the identity braid on the $n$ strings, induces an algebra homomorphism
	\[\varphi_n: \sk(T^2-D^2) \to \sk_n(T^2,*).\]

\begin{figure}[ht]
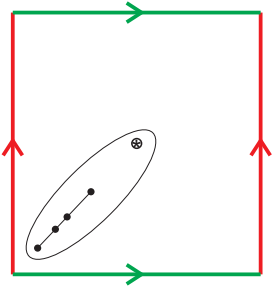

\bc
\Torusbasedisc\ec
\caption{A choice for the disc $D^2$ in $T^2$} \label{disc}
\end{figure}

	The  closed curves we   particularly want to use can be described in the skein $\sk(T^2-D^2)$.  This skein is an algebra with a homomorphism \[\sk(T^2-D^2)\to \sk(T^2)\] induced by filling in the disc. A presentation of the algebra $\sk(T^2)$ is given in \cite{MS17}, with generators $W_\xx, \xx\in \Z^2$, and relations \[[W_\xx,W_\yy]=(s^m-s^{-m})W_{\xx+\yy}\] where $m=\det(\xx \yy)$.
	
	When $\zz\in\Z^2$ is primitive there is an embedded curve $W_\zz\subset T^2$ which is taken to represent the element $W_\zz\in\sk(T^2)$. The same curve will give a well-defined element of $\sk(T^2-D^2)$, which we will also write as $W_\zz$. 
	Write $A_\zz\subset T^2-D^2$ for the annulus with $W_\zz$ as its core. Further elements $W_\xx$, where $\xx=m\zz$, are defined in section \ref{sec:closed} as in \cite{MS17} by suitably chosen combinations of curves in $A_\zz$, and determine the generators of $\sk(T^2)$ above.
	
	The elements $W_\xx$ do not generate the whole of the algebra $\sk(T^2-D^2)$, nor do all the commutation relations from \cite{MS17} hold in $\sk(T^2-D^2)$. The main problem is that the disc $D^2$ gets in the way of isotopies that can be made in $T^2$.
	
	\begin{remark}\label{rmk:ptorusrel}
	It is worth noting however that if $\yy$ and $\zz$ are	primitive, and the curves $W_\yy$ and $W_\zz$ intersect in just one point, then the commutation relation 
	\[ [W_\xx,W_\yy]=(s^m-s^{-m})W_{\xx+\yy}\] holds for $\xx=m\zz$. This is because the argument from \cite{MS17} only involves curves in the union of the annuli $A_\yy$ and $A_\zz$. 
	Hence in particular we have\footnote{The sign difference between the right hand side of \eqref{eq:demoskein}  and the corresponding relation in \cite{MS17} comes from the fact that our convention for the product in the skein algebra here (left element goes below the right element) is the opposite from \cite{MS17}.} 
	\begin{equation}\label{eq:demoskein}
	[W_{(m,0)}, W_{(0,1)}]=-(s^m-s^{-m})W_{(m,1)}
	\end{equation}
	in the image of $\daha_n$ in $\sk_n(T^2,*)$. Note that under the map\footnote{The existence of this map depends on the assumption that Conjecture \ref{conj:mainiso} is true.} $\sk^+(T^2-D^2) \to \E_{q,t}$ described in Theorem \ref{thm:psktoeha}, we have the assignments 
	\[W_{m,0} \mapsto \{m\}_s u_{m,0},\quad \quad W_{0,1} \mapsto \{1\}_s u_{0,1},\quad \quad W_{m,1} \mapsto \{1\}_s u_{m,1}
	\]
	where we have used the notation $\{d\}_s := s^d-s^{-d}$. Expanding the relation \eqref{eq:hallrel} in the case $\yy=(m,0)$ and $\xx = (0,1)$, we get
	\begin{equation}\label{eq:demoeha}
	[u_{m,0},u_{0,1}] = -u_{m,1}
	\end{equation}
	Thus, we see that that relation \eqref{eq:demoskein} in $\sk^+(T^2-D^2)$ gets mapped to the relation \eqref{eq:demoeha} in $\E_{q,t}$ under the algebra map in Theorem \ref{thm:psktoeha} (whose existence depends on Conjecture \ref{conj:mainiso}). 
	\end{remark}


	To prove theorem \ref{surjection} we first show that for each $n$ the image of $\sk(T^2-D^2)$ is represented by braids.  In other words we establish
	\begin{lemma} \[\varphi_n(\sk(T^2-D^2))\subset F_n(\bsk_n(T^2,*)).\] \label{braidrep}
	\end{lemma}
	
	This depends on two results.
	\begin{itemize}
	\item We can write any curve $C\subset T^2-D^2$ as a polynomial in the elements $\{W_\xx\}$ plus a linear combination of braids.
	\item Each $W_\xx$ is a linear combination of braids.
	\end{itemize}
	
	The proof of theorem \ref{surjection} is completed by showing that a general tangle can be written as a product of braids and closed curves which avoid $D^2$.

	\begin{lemma} The algebra $\sk(T^2-D^2)$ is generated by totally ascending single curves $C\subset T^2-D^2$. 
	\label{ascending}
	\end{lemma}
	\begin{proof} This is a standard skein theory exercise. We proceed by induction on the number of crossings in a diagram in $T^2-D^2$. Order the components of the diagram and choose a starting point on each component. A totally ascending diagram is one in which each crossing appears first as an undercrossing, when working along the component from the chosen starting point. Go through the components in turn, switching crossings, and using the skein relation, to end up with a diagram $L$ in which the components are totally ascending, along with a linear combination of diagrams with fewer crossings.
	
	The components of $L$ are then stacked one above the other, and so represent the product of single totally ascending curves.
	\end{proof}
	
	The ability to alter the starting point of a totally ascending diagram of a curve, using induction on the number of crossings in the diagram, will prove useful in the  arguments which follow.   
	
	\begin{lemma} Suppose that $C$ and $D$ are two diagrams differing only in the signs of their crossings, then $D=C+(s-s^{-1})\sum \pm D_\alpha$ where each $D_\alpha$ is a $2$-component diagram with fewer crossings than $C$. \end{lemma}
	\begin{proof} This follows immediately, using the skein relation at each crossing of $C$ to be switched.
	\end{proof}
	
	\begin{lemma} Let $C$ be a curve in $T^2-D^2$ which is totally ascending from a starting point immediately beside the boundary of $D^2$. The curve $C'$ given by diverting $C$ around the other side of $D^2$, which is also totally ascending and is isotopic to $C$ in $T^2$ but not in $T^2-D^2$, satisfies the relation
	\[\varphi_n(q^{\pm1}C'-C)=\pm(s-s^{-1})\sum_{i=1}^n \beta_i(C)\]
	where $\beta_i(C)$ are braids. \label{diversion}
	\end{lemma}
	\begin{proof}
	Divert the curve $C$ successively past the braid points to give curves $C=C_0$, $C_i$, $C_n$, and then $C'$, with $C_i$ crossing $D^2$ between points $i$ and $i+1$, interpreting $*$ as point $n+1$.
	The skein relation gives $C_i=C_{i-1}\pm (s-s^{-1})\beta_i(C)$ where $\beta_i(C)$ is a braid in which only string $i$ moves, because the curve $C_{i-1}$ is totally ascending at  the point of crossing string $i$.
	The result follows, given that $C'=q^{\pm 1}C_n$.
	\end{proof}
	\begin{remark} The sign depends on the direction of the curves $C_i$ across the disc. It will be $+$ if $C$ crosses the line of braid points in the $x$ direction. The curve $C$ represents an element  $w_C(x,y)\in \pi_1(T^2-D^2)$  based at the starting point. This fundamental group is the free group on $2$ generators, and the braid $\beta_1(C)$ is then $\beta_1(C)=w(x_1,y_1)$ while the successive braids $\beta_i(C)$ satisfy
	\[\beta_i(C)=\sigma_{i-1}\cdots \sigma_1\beta_1(C)\sigma_1\cdots\sigma_{i-1}.\]
	\end{remark}
	

%
We are now in a position to prove that a curve $C\subset T^2-D^2$ can be written in $\sk_n(T^2,*)$ as a polynomial in $\{W_\xx\}$ modulo $\im(F_n)$. Work by induction on the number of crossings in the diagram of $C$. We can assume by lemma \ref{ascending} that $C$ is totally ascending.

Write $\cc\in \Z^2$ for the  homology class of $C$ in $H_1(T^2)$, and write $\cc=m\zz$ with $\zz$ primitive. If $\cc=(0,0)$ choose any primitive as $\zz$. 

Fix a simple closed curve $Z$ in $T^2$ in the direction of $\zz$. Now perform a sequence of moves   at the expense of braids and other elements $W_\xx$, at each stage replacing $C$ by $C'$ homologous to $C$. Firstly move strands of $C$ across $D^2$ until there are no strands of $C$ lying between $Z$ and $D^2$ to one side of $Z$. At each move we  need to change the starting point to lie on the strand  to be moved across $D^2$, using lemma \ref{diversion}.

Now suppose that $Z$ is crossed  $\l$ times by $C$ in the direction away from $D^2$. If $\l=0$ then $C$ lies entirely in the annulus $A_\zz$ on the side away from $D^2$. It then represents an element in the skein of this annulus, $\sk(A_\zz)$,  which can be written as a polynomial in the commuting elements $\{W_{k\zz}\}$.  

Otherwise  $Z$ is also crossed  $\l$ times by $C$ in the opposite sense, because $C$ is homologous to $m\zz$. Choose a crossing in the direction away from $D^2$ where the next crossing is towards $D^2$, and take this as the starting point for $C$, again using the induction on the number of crossings. The arc of $C$ between the starting crossing and the next one   can now be isotoped across $Z$, without crossing $D^2$,  to reduce $\l$.

%


  In equation \eqref{eq:Wasbraid}, it is shown that $W_{(m,0)}$ can be written as a linear combination of braids, and combining this with the $SL_2(\Z)$ action shows that any curve in $T^2-D^2$ can be represented in $\sk_n(T^2,*)$ by a linear combination of braids. 
 This completes the proof of lemma \ref{braidrep}.


The last step in the proof of theorem \ref{surjection} is to deal with a general framed $n$-tangle in $(T^2,*)$. This will consist of $n$ arcs along with some closed curves. We use the plan view, modified slightly to separate the top and bottom points, and work by induction on the number of crossing points in a diagram to show that it represents a polynomial in braids and closed curves avoiding $D^2$. 

Make the tangle diagram totally ascending, choosing starting points first at the bottom point of each arc and then on the closed curves. This can be done by the induction. The resulting diagram represents a product of a braid with   closed curves, since the arcs are each totally ascending, and the crossings with closed curves all lie above them.
It remains to show that a single totally ascending curve $C$ in the torus which avoids the $n+1$ points $\{1,\cdots,n,*\}$ can be altered at the expense of braids to avoid the line through the $n+1$ points which determines $D^2$. 

Suppose that $C$ crosses the connecting line immediately to the right of point $i$. Make $C$ totally ascending at this crossing position, and then move $C$ across $i$ to lie immediately to its left, at the expense of a braid $\beta_i(C)$ as in the proof of lemma \ref{diversion}.  Continue moving intersections to the left, and eventually past the point $1$ at the end of the line, to finish by avoiding the connecting line altogether, and hence lying in $T^2-D^2$.  Lemma \ref{braidrep} then completes the proof of theorem \ref{surjection}
\end{proof}

\begin{remark} It is not clear whether the homomorphism $F_n$ is injective. 
 There can be the question of possible further relations between elements in the image of $\bsk_n(T^2,*)\cong \ddot H_n$ coming from the additional closed curves that can be used in $\sk_n(T^2,*)$.
\end{remark}

Despite the previous remark, we conjecture (see Conjecture \ref{conj:mainiso} in the introduction) that the algebra map $F_n:\ddot H_n\to \sk_n(T^2,*)$ in Theorem \ref{thm:dahatoskein} is an isomorphism.

%
%

\section{Relations with the elliptic Hall algebra}\label{fullskein}

In \cite{SV11} the authors relate the double affine Hecke algebras $\ddot{H}_n$ to the elliptic Hall algebra.
As part of their construction they make use of the sums of powers \[
\sum_i X_i^l, \sum_i Y_i^l \in \ddot{H}_n 
\] 
which have a  very useful skein theoretic description, and which led us to try including closed curves in our skein $\bsk_n(T^2,*)$. We will show that the images of these elements in $\sk_n(T^2,*)$ agree with the images of certain natural elements in $\sk(T^2\oursetminus D^2)$. In Theorem  \ref{thm:psktoeha}, we combine this with results of Schiffmann and Vasserot to show that Conjecture \ref{conj:mainiso} implies a weakened version of Conjecture \ref{conj:punctoeha}.

\subsection{Certain closed curves}\label{sec:closed}
For the moment consider the Homflypt skein $\sk_n(A)$ where $A$ is an annulus, using oriented diagrams in the thickened annulus $A\times I$ with $n$ output points on the top $A\times\{1\}$, and $n$ matching input points on $A\times\{0\}$.  We also allow closed components in the diagrams.

When restricted to braid diagrams    the skein $\bsk_n(A)$ is used by Graham and Lehrer as a model for the affine Hecke algebra $\dot {H}_n$, where composition is again induced by composition of braids.

Write $Z_i$  and $\overline Z_i$ for the elements represented in $\sk_n(A)$ by the diagrams shown here. Take the framing of the closed component as given by the plane of the diagram. 
\bc $Z_i\quad = \quad $\labellist\small
\pinlabel {$i$} at 210 335
\endlabellist \xiibraidblue\ , $\quad \overline Z_i\quad = \quad $\labellist\small
\pinlabel {$i$} at 210 335
\endlabellist \etaibraidblue\\[4mm]
\ec

It is readily established that 
\beqn \backid \quad -\quad  \frontid &=&(s-s^{-1}) \sum Z_i\\
&=&(s-s^{-1}) \sum \overline Z_i
\eeqn
 
 There is also a well-established element $P_m$ for each $m$ in the skein $\sk(A)=\CC$ of   the annulus with no boundary points  which satisfies the relation
 
 \beqn \labellist\small
\pinlabel {$P_m$} at 150 415
\endlabellist\backidblue \quad - \quad \labellist\small
\pinlabel {$P_m$} at 150 415
\endlabellist\frontidblue &=&  (s^m-s^{-m})\sum Z_i^m\\
&=&(s^m-s^{-m}) \sum \overline Z_i^m .
\eeqn
 A detailed account of $P_m$ can be found, for example, in \cite{Mor02b}.

When we embed $A$ into $T^2$ around the $(1,0)$ curve,  matching the braid points suitably, the induced homomorphism from $\sk_n(A)$ to $\sk_n(T^2,*)$ gives the equation
 \beqn(s^m-s^{-m})\sum_i x_i^m&=& \labellist\small
\pinlabel {$P_m$} at 160 353
\endlabellist\uxfront \quad-\quad\labellist\small
\pinlabel {$P_m$} at 160 387
\endlabellist\uxback \\
 &=&(1-c^{2m}) \ \labellist\small
\pinlabel {$P_m$} at 160 353
\endlabellist\uxfront
\eeqn  
 
 Similarly, taking $A$ around  the $(0,1)$ curve on $T^2$ we get
 \beqn(s^m-s^{-m})\sum_i y_i^m&=& \labellist\small
\pinlabel {$P_m$} at 135 405
\endlabellist\uyfront \quad-\quad\labellist\small
\pinlabel {$P_m$} at 105 405
\endlabellist\uyback \\
 &=&(c^{-2m}-1) \ \labellist\small
\pinlabel {$P_m$} at 135 405
\endlabellist\uyback
\eeqn 

In  $\sk_n(A)$, taking account of the crossing signs, we also have

 \beqn \labellist\small
\pinlabel {$P_m$} at 130 415
\endlabellist\backidleftblue \quad - \quad \labellist\small
\pinlabel {$P_m$} at 130 415
\endlabellist\frontidleftblue &=&  -(s^m-s^{-m})\sum Z_i^{-m}\\
&=&-(s^m-s^{-m})\sum \overline Z_i^{-m}.\eeqn

Placing $A$ along the $(1,0)$ curve then gives 
\beqn -(s^m-s^{-m})\sum x_i^{-m}&=& \labellist\small
\pinlabel {$P_m$} at 160 353
\endlabellist\uxfrontleft \quad-\quad\labellist\small
\pinlabel {$P_m$} at 160 387
\endlabellist\uxbackleft \\
 &=&(1-c^{-2m}) \ \labellist\small
\pinlabel {$P_m$} at 160 353
\endlabellist\uxfrontleft\ ,\\
\eeqn  

while placing $A$ along the $(0,1)$ curve gives
 \beqn-(s^m-s^{-m})\sum_i y_i^{-m}&=& \labellist\small
\pinlabel {$P_m$} at 135 405
\endlabellist\uyfrontdown \quad-\quad\labellist\small
\pinlabel {$P_m$} at 105 405
\endlabellist\uybackdown \\
 &=&(c^{2m}-1) \ \labellist\small
\pinlabel {$P_m$} at 105 405
\endlabellist\uybackdown\\
\eeqn

In \cite{SV11} there is a description of the elliptic Hall algebra which involves generators $u_\xx$ for every non-zero $\xx\in \Z^2$. These elements satisfy  certain commutation relations, and the comparison with the algebras $\ddot{H}_n$ requires the prescription of an image for each $u_\xx$, and a check on their commutation properties.

We can give a version of this comparison by using the skein $\sk_n(T^2,*)$, and the skein $\sk(T^2 -D^2)$.
Fix a disc $D^2$ in $T^2$ which includes the braid points and the base point. A suitable choice for our purposes is a neighbourhood of the  diagonal in the square. 
In the previous section we introduced the homomorphism
\begin{equation}\label{eq:fillinpunc}
\varphi_n: \sk(T^2-D^2)\to \sk_n(T^2,*)
\end{equation}
defined by taking the banded curves in $T^2 - D^2$ along with the identity $n$-braid in $\sk_n(T^2,*)$, consisting of $n$ vertical strings in $D^2\x I$ and the base string.

Now any oriented embedded curve in $T^2 - D^2$ is determined up to isotopy  by a primitive element $\zz\in \Z^2$, representing the homology class of the curve. This curve, framed by its neighbourhood in $T^2$ defines an element $W_\zz\in \sk(T^2-D^2)$.
For any other non-zero $\xx\in \Z^2$ write $\xx=m\zz$ with $m>0$ and $\zz$ primitive, and define $W_\xx$ to be $W_\zz$ with the closed curve decorated by the element $P_m$. 

We will write $W_\xx$ also for its image in the skein $\sk_n(T^2,*)$.
We then have plan views of $W_{(\pm m,0)}$ and $W_{(0,\pm m)}$ as
\bc $W_{(m,0)}\ =\ $ \labellist\small
\pinlabel {$P_m$} at 160 353
\endlabellist\uxfront
\ ,\quad $W_{(-m,0)}\ =\ $ \labellist\small
\pinlabel {$P_m$} at 160 353
\endlabellist\uxfrontleft
\ ,  
\ec
\bc $W_{(0,m)}\ =\ $ \labellist\small
\pinlabel {$P_m$} at 105 405
\endlabellist\uyback\ , \quad $W_{(0,-m)}\ =\ $ \labellist\small
\pinlabel {$P_m$} at 105 405
\endlabellist\uybackdown\ ,
\ec

Our equations above show that 
\begin{align}  
(1-c^{2m})W_{(m,0)}&=(s^m-s^{-m})\sum x_i^m,\label{eq:Wasbraid}\\
(c^{-2m}-1)W_{(-m,0)}&=(s^m-s^{-m})\sum x_i^{-m}\notag\\
(c^{-2m} - 1)W_{(0,m)}&=(s^m-s^{-m})\sum y_i^m\notag \\
(1-c^{2m})W_{(0,-m)}&=(s^m-s^{-m})\sum y_i^{-m}.\notag
\end{align}

\subsection{Comparison with the algebraic approach} 

For non-zero $\xx\in \Z^2$ Schiffman and Vasserot in \cite{SV11} define elements $Q_\xx$ in the spherical algebra $S\daha_n$, where $S\daha_n$ is defined as $e_n\daha_n e_n$, with $e_n\in H_n$ being the symmetrizer. They use the elements $Q_\xx$  in setting up their comparisons with the elliptic Hall algebra.  

Using the identification of $\bsk_n(T^2,*)$ with $\daha_n$, where $q=c^{-2}, s^2=t$, we show now that our elements $W_\xx$ are closely related to $Q_\xx\in S\daha_n$, when mapped into the full skein algebra $\sk_n(T^2,*)$.


Before doing this we note the construction of the symmetrizer $e_n\in H_n\subset \daha_n$ in the braid skein setting, as used by Aiston and Morton in \cite{AM98}.

We use the model of the Hecke algebra $H_n$ described in   \cite{MT90}, and further in \cite{AM98}. The symmetrizer is given there as a multiple of the quasi-idempotent 
 $a_n=\sum s^{l(\pi )}\omega _{\pi}$, where $\omega _{\pi}$ is the positive permutation braid associated to the permutation $\pi$  with length $l(\pi )$ in the symmetric group. The symmetrizer is then $e_n=\frac{1}{\alpha_n}a_n$  where $\alpha_n$ is given by the equation $a_na_n=\alpha _na_n$ \cite{{Luk05}, {AM98}}.  Using the  quasi-idempotent $b_n=\sum  (-s)^{-l(\pi )}\omega _{\pi}$ in a similar way gives the antisymmetrizer. 
 
 
 We  prefer to avoid the notation $S$ for the symmetrizer, because of  conflict with the notation for the symmetric group.  In \cite{SV11} the element $a_n$ is denoted by $\tilde S$, and the symmetrizer by $S$.  

\begin{theorem} \label{PWcomparison} For   $\xx\in\Z^2$ we have the following equality in $\sk_n(T^2,*)$:
\[
(q^m-1)e_nW_\xx e_n=(s^m-s^{-m})Q_\xx,
\] 
where $\xx=m\yy$ with $\yy$ primitive and $m>0$.
\end{theorem}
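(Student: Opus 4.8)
The plan is to prove the identity first in the single clean base case $\xx=(0,m)$ and then to propagate it to all nonzero $\xx$ using the $\SL_2(\Z)$-equivariance shared by both sides. Note that the theorem is stated after clearing denominators, so both sides are honest elements of $\sk_n(T^2,*)$ and no inversion of $q^m-1$ is needed.

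For the base case, recall that Schiffmann and Vasserot \emph{define} $Q_{0,m}=e_n\sum_i Y_i^m e_n$. The third identity in \eqref{eq:Wasbraid} reads $(c^{-2m}-1)W_{(0,m)}=(s^m-s^{-m})\sum_i y_i^m$ in $\sk_n(T^2,*)$. Under the isomorphism $\bsk_n(T^2,*)\cong\daha_n$ of Theorem \ref{thm:bskiso} we have $y_i\mapsto Y_i$ and $c^{2}\mapsto q^{-1}$, so $c^{-2m}-1=q^m-1$. Multiplying on both sides by $e_n$ and invoking the definition of $Q_{0,m}$ gives
\[
(q^m-1)\,e_n W_{(0,m)} e_n=(s^m-s^{-m})\,e_n\Big(\sum_i Y_i^m\Big)e_n=(s^m-s^{-m})\,Q_{0,m},
\]
which is exactly the assertion for $\xx=(0,m)$, $\yy=(0,1)$.

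To handle a general $\xx=m\yy$ with $\yy$ primitive and $m>0$, I would use the $\SL_2(\Z)$ action. On the topological side, $\SL_2(\Z)$ is the mapping class group of the torus, and a class $g$ sends the embedded curve $W_\zz$ to $W_{g\zz}$ and carries the annulus $A_\zz$ (hence its $P_m$-decoration) to $A_{g\zz}$; thus $g\cdot W_{(0,m)}=W_{g(0,m)}$. On the algebraic side, $Q_\xx$ is by construction $Q_\xx=g\cdot Q_{0,m}$ for any $g$ with $g(0,m)=\xx$. Since $\SL_2(\Z)$ acts transitively on primitive vectors and $(0,m)=m(0,1)$, every $\xx=m\yy$ arises as $g(0,m)$. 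The scalar $(q^m-1)/(s^m-s^{-m})$ depends only on $m=d(\xx)$, which is $\SL_2(\Z)$-invariant, and the symmetrizer $e_n$ is fixed by the action because $\SL_2(\Z)$ preserves the finite Hecke subalgebra $H_n$ generated by the $\sigma_i$ and fixes its symmetrizing idempotent. Applying $g$ to the base-case identity then yields $(q^m-1)\,e_n W_\xx e_n=(s^m-s^{-m})\,Q_\xx$, as required.

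The main obstacle is the compatibility of the two $\SL_2(\Z)$ actions: one must check that the geometric mapping-class-group action on curves and braids in the punctured torus is intertwined, under the isomorphism of Theorem \ref{thm:bskiso}, with the algebraic action used by Schiffmann and Vasserot on the spherical DAHA $S\daha_n$. Concretely, this means verifying that Cherednik's $\SL_2(\Z)$ automorphisms correspond to the action of mapping classes on braids, and in particular that each such automorphism fixes $e_n$ and sends the image of $W_{(0,m)}$ to the image of $W_{g(0,m)}$. This is exactly why the $\SL_2(\Z)$ step cannot be replaced by a naive computation straight from \eqref{eq:Wasbraid}: the coefficient $1-c^{2m}=1-q^{-m}$ appearing there for $W_{(m,0)}$ differs from the $q^m-1$ demanded by the theorem by a factor of $q^m$, and it is precisely the $\SL_2(\Z)$-definition of $Q_{(m,0)}$ that absorbs this discrepancy. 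A subsidiary point to treat with care is that a mapping class permuting the $n$ braid points may introduce braid factors, but these become harmless once the expressions are conjugated by the symmetrizer $e_n$.
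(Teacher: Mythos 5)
Your base case is identical to the paper's: both start from Schiffmann--Vasserot's definition $Q_{(0,m)}=e_n\sum_i Y_i^m e_n$, apply the third equation of \eqref{eq:Wasbraid}, and translate $c^{-2m}-1=q^m-1$. Your propagation strategy for general $\xx$ -- apply $\SL_2(\Z)$-equivariance, noting that $d(\xx)$ is invariant and that $e_n$ is fixed -- is also exactly the paper's strategy, and your observation about the stray factor $q^m$ in the $(m,0)$ case being absorbed by the $\SL_2(\Z)$-definition of $Q_{(m,0)}$ is correct (the paper checks the cases $(\pm m,0)$ and $(0,-m)$ against \cite[Eq.~2.16-2.18]{SV11} explicitly and the $q^m$ factors match).

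However, there is a genuine gap: you name the intertwining of the geometric and algebraic actions as ``the main obstacle'' that ``one must check,'' and then never check it -- and this verification is precisely the mathematical content of the paper's proof beyond the base case. The paper discharges it concretely: the right-hand Dehn twist $\tau_1$ about the $(1,0)$ curve and the left-hand Dehn twist $\tau_2$ about the $(0,1)$ curve are computed on the generators of $\bsk_n(T^2,*)$, giving $\tau_1(\sigma_i)=\sigma_i$, $\tau_1(x_i)=x_i$, $\tau_1(y_i)=y_ix_i\delta_i$ and $\tau_2(\sigma_i)=\sigma_i$, $\tau_2(x_i)=x_iy_i\delta_i^{-1}$, $\tau_2(y_i)=y_i$, where $\delta_i=\sigma_{i-1}\cdots\sigma_1\sigma_1\cdots\sigma_{i-1}$; one then observes that under $x_i,y_i,\sigma_i\leftrightarrow X_i,Y_i,T_i^{-1}$ these are exactly the automorphisms $\rho_2,\rho_1$ of \cite{SV11} used to define $Q_\xx$ from $Q_{(0,m)}$. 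In particular, the fact that $e_n$ is fixed is a consequence of the computed identity $\tau_j(\sigma_i)=\sigma_i$, not of a general principle one can simply invoke. A secondary imprecision: the group acting on $\sk(T^2-D^2)$ is not $\SL_2(\Z)$ but the mapping class group of $T^2$ fixing the disc $D$, which is the braid group $B_3$; the central kernel of $B_3\to\SL_2(\Z)$ is generated by the Dehn twist about $\partial D$, namely $(\tau_1\tau_2^{-1}\tau_1)^4$, which acts trivially on the curves $W_\xx$ but acts on $\daha_n$ by conjugation by the full twist $\Delta^2$ (the paper proves this separately in the following subsection). So the well-definedness issue you should worry about is this central extension, not mapping classes ``permuting the braid points'' -- the Dehn twists used fix all braid points and the base string pointwise.
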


\begin{proof} We start from the definition in \cite{SV11} which sets $Q_{(0,m)}:=e_n\sum Y_i^m e_n$ for $m>0$. 

Our third equation above  proves the theorem for $\xx=(0,m)$, since \[(q^m-1)e_n W_{(0,m)} e_n= (s^m-s^{-m}) e_n\sum Y_i^me_n = (s^m-s^{-m}) Q_{(0,m)}.\]

When $\xx= ({\pm m,0}),  ({0, -m})$  the values of $Q_\xx$ are shown in  \cite[Eq. 2.16-2.18]{SV11} to be
\beqn    Q_{(-m,0)}&=& e_n\sum X_i^{-m}e_n\\
Q_{(0,-m)}&=& q^me_n\sum Y_i^{-m}e_n\\
Q_{(m,0)}&=& q^me_n \sum X_i^me_n.
\eeqn

The theorem follows immediately from \eqref{eq:Wasbraid} in these cases too, since \[(q^m-1)W_{(-m,0)}=(s^m-s^{-m})\sum x_i^{-m},\] giving the case $\xx=(-m,0)$, while \[(q^m-1)W_{(m,0)}=(s^m-s^{-m})q^m\sum x_i^{m}\] and \[(q^m-1)W_{(0,-m)}=(s^m-s^{-m})q^m\sum y_i^{-m},\] giving the other two cases.

We use automorphisms of $\daha_n$, and their counterpart in the skein models $\bsk_n(T^2,*)$ and $\sk_n(T^2,*)$ to establish the proof for general $\xx$.

Firstly, in our skein model, a right-hand Dehn twist about the (unoriented) $(1,0)$ curve in $T^2 - D$ induces an automorphism $\tau_1$ of $\sk(T^2-D^2)$, which carries $W_{\xx}$ to $W_\yy$ with 
\[
\yy=
\begin{pmatrix} 1&1\\ 0&1 \end{pmatrix}\xx.
\]  

A left-hand Dehn twist about the unoriented $(0,1)$ curve in $T^2 - D$ induces an automorphism $\tau_2$ of $\sk(T^2-D^2)$, which carries $W_{\xx}$ to $W_\yy$ with \[\yy=\begin{pmatrix} 1&0\cr 1&1 \end{pmatrix}\xx.\]  

These two automorphisms generate all homeomorphisms of $T^2$ which fix $D$, up to isotopy fixing $\partial D$. This group of automorphisms is isomorphic to the braid group $B_3$ with $\tau_1$ and $\tau_2^{-1}$ playing the roles of the usual Artin generators $\sigma_1, \sigma_2$. The kernel of the map to $SL(2,\Z)$ is infinite cyclic, generated by $(\tau_1 \tau_2^{-1}\tau_1)^4$, which is the right-hand Dehn twist about $\partial D$.

For any $\xx$ with $d(\xx)=m>0$ we can find an automorphism $\gamma$ so that $\xx=\gamma((0,m))$.

Now the effect of $\tau_1$ on the generators $\sigma_i, x_i, y_i$ of $\bsk_n(T^2,*)$ is
\beqn \tau_1(\sigma_i)&=& \sigma_i\\
\tau_1(x_i)&=&x_i\\
\tau_1(y_i)&=& \eta_i
\eeqn where 
\[ \eta_i=y_i x_i \delta_i\] and \[\delta_i= \sigma_{i-1}\ldots \sigma_1 \sigma_1\ldots \sigma_{i-1}.\]

The effect of $\tau_2$ is
\beqn \tau_2(\sigma_i)&=& \sigma_i\\
\tau_2(x_i)&=&\xi_i\\
\tau_2(y_i)&=& y_i
\eeqn where 
\[ \xi_i=x_i y_i \delta_i^{-1}.
\]

The automorphisms $\rho_1$ and $\rho_2$ used in \cite{SV11} agree with $\tau_2$ and $\tau_1$, given the correspondence of $x_i, y_i, \sigma_i$ with $X_i, Y_i, T_i^{-1}$ respectively.

Since $Q_\xx$ is given from $Q_{(0,m)}$ by applying a suitable product of $\rho_1$ and $\rho_2$, the same automorphism will carry $W_{(0,m)}$ to $W_\xx$ and the theorem will follow.
\end{proof}

\subsection{Without the symmetrizer}

Theorem \ref{PWcomparison}, which refers to elements of $\sk_n(T^2,*)$, suggests that $Q_\xx$ could be defined 
 unambiguously from an element $\tilde Q_\xx$ in   $\daha_n\cong \bsk(T^2,*)$ before passing to $S\daha_n$.  The kernel of the map from $B_3$ to $SL(2,\Z)$ is generated by $(\tau_1 \tau_2^{-1}\tau_1)^4$. In the skein model this is a Dehn twist about the boundary of the disc $D$, and so in this model we expect the following theorem, which we can prove algebraically.
  
\begin{proposition}
For any  $Z\in \daha_n\cong \bsk_n(T^2,*)$ we have \[(\tau_1 \tau_2^{-1}\tau_1)^4 Z=\Delta^{-2} Z \Delta^2,\]
where $\Delta^2$ is the full twist braid in the finite Hecke algebra $H_n$.  
\end{proposition}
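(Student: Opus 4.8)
The plan is to prove the equality of two algebra automorphisms of $\daha_n\cong\bsk_n(T^2,*)$: the automorphism $\tau:=(\tau_1\tau_2^{-1}\tau_1)^4$ and the inner automorphism $\mathrm{Ad}_{\Delta^2}\colon Z\mapsto\Delta^{-2}Z\Delta^2$. Since both are algebra automorphisms, it suffices to check that they agree on the generators $\sigma_1,\dots,\sigma_{n-1},x_1,y_1$ of Theorem~\ref{braidpresentation}. On each $\sigma_i$ the two maps agree trivially: $\tau$ fixes $\sigma_i$ because $\tau_1,\tau_2$ do, while $\mathrm{Ad}_{\Delta^2}$ fixes $\sigma_i$ because the full twist $\Delta^2$ is central in the finite Hecke algebra $H_n$. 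So the whole problem reduces to the action on $x_1$ and $y_1$.

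First I would record the action of $\tau_S:=\tau_1\tau_2^{-1}\tau_1$ on $x_1,y_1$. Since $\delta_1$ is an empty product, the stated formulas read $\tau_1(y_1)=y_1x_1$ and $\tau_2(x_1)=x_1y_1$, so $\tau_2^{-1}(x_1)=x_1y_1^{-1}$, and a direct substitution gives
\[\tau_S(x_1)=y_1^{-1},\qquad\tau_S(y_1)=y_1x_1y_1^{-1}.\]
Because these images are words in $x_1,y_1$, computing $\tau_S^4(x_1)$ only requires iterating these substitution rules, and the result simplifies to the conjugate $\tau_S^4(x_1)=(y_1x_1y_1^{-1})\,x_1\,(y_1x_1y_1^{-1})^{-1}$.

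The crux is to recognize this conjugate as $\Delta^{-2}x_1\Delta^2$. Here I would use the commutator relation~\eqref{eq:commutator}, which in the form $x_1y_1=c^2\beta_n\,y_1x_1$ with $\beta_n=\sigma_1\cdots\sigma_{n-1}\sigma_{n-1}\cdots\sigma_1$ gives $y_1x_1y_1^{-1}=c^{-2}\beta_n^{-1}x_1$; substituting this, the scalars cancel and $\tau_S^4(x_1)=\beta_n^{-1}x_1\beta_n$. It then remains to show $\beta_n^{-1}x_1\beta_n=\Delta^{-2}x_1\Delta^2$, i.e.\ that $\beta_n\Delta^{-2}$ commutes with $x_1$. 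For this I would invoke the standard decomposition of the full twist, $\Delta^2=\beta_n\,\Delta^2_{\{2,\dots,n\}}$, in which the pure braid $\beta_n$ (string $1$ once around the others) commutes with the full twist $\Delta^2_{\{2,\dots,n\}}$ on the remaining strings. This identifies $\beta_n\Delta^{-2}$ with $(\Delta^2_{\{2,\dots,n\}})^{-1}$, which commutes with $x_1$ because $x_1$ moves only string $1$ while $\Delta^2_{\{2,\dots,n\}}$ involves only strings $2,\dots,n$. Hence $\tau(x_1)=\Delta^{-2}x_1\Delta^2$.

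Finally, rather than repeat the longer analogous computation for $y_1$, I would use a symmetry to get it for free. Put $\Phi:=\mathrm{Ad}_{\Delta^2}^{-1}\circ\tau$; we have just shown $\Phi(\sigma_i)=\sigma_i$ and $\Phi(x_1)=x_1$. As $\tau_S$ fixes each $\sigma_i$ it fixes $\Delta^2$, so $\mathrm{Ad}_{\Delta^2}$ commutes with $\tau_S$, hence with $\tau=\tau_S^4$, and therefore $\Phi$ commutes with $\tau_S$. Applying this to $x_1$ and using $\tau_S(x_1)=y_1^{-1}$ yields $\Phi(y_1^{-1})=\tau_S(\Phi(x_1))=\tau_S(x_1)=y_1^{-1}$, so $\Phi(y_1)=y_1$. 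Thus $\Phi$ fixes all generators, $\Phi=\mathrm{Id}$, and $\tau=\mathrm{Ad}_{\Delta^2}$. I expect the main obstacle to be exactly the step of converting the word $\tau_S^4(x_1)$ in $x_1,y_1$ into an honest conjugation by the Hecke-algebra element $\Delta^2$, which hinges on pairing the commutator relation~\eqref{eq:commutator} with the full-twist decomposition $\Delta^2=\beta_n\,\Delta^2_{\{2,\dots,n\}}$.
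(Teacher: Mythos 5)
Your proposal is correct, and its computational core is the same as the paper's: you reduce to the generators $\sigma_i, x_1, y_1$, derive $\tau_S(x_1)=y_1^{-1}$, $\tau_S(y_1)=y_1x_1y_1^{-1}$, iterate to express $\tau_S^4(x_1)$ as a conjugate (your $(y_1x_1y_1^{-1})x_1(y_1x_1y_1^{-1})^{-1}$ equals the paper's $[x_1,y_1]^{-1}x_1[x_1,y_1]$), convert it to $\beta_n^{-1}x_1\beta_n$ via relation \eqref{eq:commutator} (the scalar $c^{\pm2}$ cancelling under conjugation), and finish with the full-twist decomposition --- your $\Delta^2=\beta_n\,\Delta^2_{\{2,\dots,n\}}$ is exactly the paper's $\Delta^2=w\beta_n$ with $w$ the full twist on strings $2,\dots,n$. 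Where you genuinely depart from the paper is the treatment of $y_1$: the paper simply repeats the explicit computation, finding $\tau_S^4(y_1)=[x_1,y_1]^{-1}y_1[x_1,y_1]=\beta_n^{-1}y_1\beta_n$ (and correspondingly needs $w$ to commute with $y_1$ as well as $x_1$), whereas you observe that $\tau_S$ fixes every $\sigma_i$ and hence fixes $\Delta^2$, so $\mathrm{Ad}_{\Delta^2}$ commutes with $\tau_S$; applying the automorphism $\Phi=\mathrm{Ad}_{\Delta^2}^{-1}\circ\tau_S^4$ to $\tau_S(x_1)=y_1^{-1}$ then yields $\Phi(y_1)=y_1$ for free. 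This symmetry trick is valid, eliminates the second computation, and weakens what must be checked about $w$; the paper's route is more pedestrian but records the intermediate formulas for $\theta^2$ and $\theta^4$ on both generators, which makes the conjugating element $[x_1,y_1]=c^2\beta_n$ visible on its own terms.
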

\begin{proof} It is enough to prove this when $Z=x_1$ and $Z=y_1$, since these elements, along with $\sigma_i$, generate $\bsk_n(T^2,*)$. In the case $Z=\sigma_i$ we have $\tau_1(\sigma_i)=\tau_2(\sigma_i)=\sigma_i$, while the full twist $\Delta^2$ commutes with each $\sigma_i$.

We also know that 
\beqn
 \tau_1(x_1)&=&x_1\\
\tau_1(y_1)&=& y_1 x_1\\
\tau_2^{-1}(x_1)&=&x_1 y_1^{-1}\\
\tau_2^{-1}(y_1)&=&y_1
\eeqn

Writing $\tau_1 \tau_2^{-1}\tau_1=\theta$ we get
\[ \theta(x_1)=y_1^{-1}, \theta (y_1)=y_1 x_1 y_1^{-1}\] so
\[ \theta^2(x_1)=(\theta (y_1))^{-1}=y_1 x_1^{-1} y_1^{-1}=(y_1x_1)x_1^{-1}(y_1x_1)^{-1}\]
\[
\theta^2(y_1)=\theta (y_1)\theta (x_1)(\theta (y_1))^{-1}=(y_1x_1)y_1^{-1}(y_1x_1)^{-1}\]

Finally
\beqn \theta^4(x_1)&=&\theta^2(y_1x_1)\theta^2(x_1^{-1})(\theta^2(y_1x_1))^{-1}=(y_1 x_1)(y_1^{-1}x_1^{-1})x_1(x_1 y_1)(y_1 x_1)^{-1}\\
&=&[x_1,y_1]^{-1}x_1 [x_1,y_1]\\[2mm]
\theta^4(y_1)&=&[x_1,y_1]^{-1}x_1 [x_1,y_1]
\eeqn

Now $[x_1,y_1]=c^2\beta_n$ so \beqn
\theta^4(x_1)&=&\beta_n^{-1}x_1\beta_n\\
\theta^4(y_1)&=&\beta_n^{-1}y_1\beta_n
\eeqn
The result now follows since \[\Delta^2=w(\sigma_2,\cdots,\sigma_{n-1})\sigma_1\sigma_2\cdots\sigma_{n-1}\sigma_{n-1}\cdots\sigma_2\sigma_1 =w\beta_n\] and the braid $w$ commutes with $x_1$ and $y_1$.
\end{proof}

\begin{remark} Simental, \cite[Lemma 2.4.20]{Simmental}, in notes which are part of a seminar series at MIT and Northeastern in 2017, makes a similar observation when applied to the spherical algebra $\SH_n$, to demonstrate the construction of the elements $Q_\xx$.
\end{remark}
We can go further and define $\tilde Q_{0,m}$ for $m>0$, by
\[\tilde Q_{0,m}=y_1^m+y_2^m+\cdots+y_n^m.\] 
Then \[Q_{0,m}=e_n\tilde Q_{0,m} e_n,\] in \cite{SV11}.
We follow the same procedure as in \cite{SV11} to define $\tilde Q_\xx$ from $\tilde Q_{0,m}$ by applying an automorphism from $SL(2,\Z)$ which takes $(0,m)$ to $\xx$.

This gives a well-defined element $\tilde Q_\xx$, provided we can show that $(\tau_1 \tau_2^{-1}\tau_1)^4=\theta^4$ acts trivially on $\tilde Q_{0,m} = \sum y_i^m$.
So we prove
\begin{lemma}
\[\Delta^{-2}(y_1^m+\cdots+y_n^m)\Delta^2 =y_1^m+\cdots+y_n^m.\]
\end{lemma}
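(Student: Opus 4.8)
The plan is to transport the statement into $\daha_n$ via the isomorphism of Theorem \ref{thm:bskiso} and recognize it as an instance of the classical description of the center of the affine Hecke algebra. Under that isomorphism $y_i \mapsto Y_i$ and $\sigma_i \mapsto T_i^{-1}$, so the full twist $\Delta^2$, being a word in the $\sigma_i$, lies in the finite Hecke algebra $H_n = \langle T_1,\dots,T_{n-1}\rangle$, and the assertion becomes the claim that $\sum_{i=1}^n Y_i^m$ commutes with $\Delta^2$. Since $\Delta^2 \in H_n$ is a product of the generators $T_i^{\pm 1}$, it is enough to show that $\sum_i Y_i^m$ commutes with each $T_i$; commutation with $\Delta^2$ then follows by multiplicativity.

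For this I would invoke the Bernstein description of the center. The subalgebra $\dot H_n = \langle T_i, Y_j\rangle \subset \daha_n$ is the affine Hecke algebra: its generators satisfy exactly the quadratic and braid relations for the $T_i$, commutativity of the $Y_j$, the relations $[T_i,Y_j]=0$ for $j\neq i,i+1$, and $Y_{i+1}=T_i^{-1}Y_iT_i^{-1}$. Bernstein's theorem identifies the center of $\dot H_n$ with the ring of symmetric Laurent polynomials in $Y_1,\dots,Y_n$. The power sum $\sum_i Y_i^m$ is such a symmetric polynomial, hence central in $\dot H_n$; in particular it commutes with every $T_i$, and therefore with $\Delta^2$. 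Transporting back along the isomorphism gives
\[
\Delta^{-2}\Big(\sum_{i} y_i^m\Big)\Delta^2=\sum_{i} y_i^m,
\]
as required.

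The step I expect to require the most care is matching conventions so that Bernstein's theorem applies verbatim, i.e.\ confirming that the relation $Y_{i+1}=T_i^{-1}Y_iT_i^{-1}$ together with the quadratic relation $(T_i+t^{1/2})(T_i-t^{-1/2})=0$ is the standard presentation of $\dot H_n$ up to replacing $T_i$ by $T_i^{-1}$. Once this is in place, the commutation relation $[\sum_j Y_j^m,\,T_i]=0$ is a formal consequence of the affine Hecke relations, all of which are among the defining relations of $\daha_n$, so it holds in $\daha_n$. If one prefers to avoid quoting Bernstein, the identity $[\sum_j Y_j^m, T_k]=0$ can be checked directly: only the terms $Y_k^m$ and $Y_{k+1}^m$ interact with $T_k$ (the remaining $Y_j$ commute with $T_k$), so it reduces to a rank-one computation in $a=Y_k$ and $b=Y_{k+1}=T_k^{-1}aT_k^{-1}$. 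Using $T_kbT_k=a$ and the quadratic relation $T_k-T_k^{-1}=s-s^{-1}$ to move $T_k$ past $a^m+b^m$, the extra $(s-s^{-1})$-terms cancel in pairs, exactly as in the $n=2$, $m=1$ case, giving $(a^m+b^m)T_k=T_k(a^m+b^m)$ and hence the lemma.
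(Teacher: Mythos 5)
Your proposal is correct, and its skeleton matches the paper's: both arguments reduce the claim to showing that $\sum_i y_i^m$ commutes with each finite Hecke generator (using that $\Delta^2$ is a word in the $\sigma_i$, and that $[\sigma_i,y_j]=0$ for $j\neq i,i+1$, so only the adjacent pair $y_i^m+y_{i+1}^m$ matters). Where you diverge is the key step: you quote Bernstein's theorem that symmetric Laurent polynomials in the $Y_j$ are central in the affine Hecke algebra, whereas the paper stays elementary and self-contained: it checks by a two-line computation with the quadratic relation that $\sigma_i$ commutes with the two elementary symmetric polynomials $y_i+y_{i+1}$ and $y_iy_{i+1}$, and then uses the fact that $y_i^m+y_{i+1}^m$ is a polynomial in these (legitimate since $[y_i,y_{i+1}]=0$). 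Your route is shorter and identifies the lemma as the easy inclusion in Bernstein's description of the center, at the cost of importing a nontrivial external theorem and a convention check; the paper's route costs a few lines of algebra but needs nothing beyond the defining relations. One caution about your fallback ``direct'' argument: the claim that for general $m$ the $(s-s^{-1})$-terms ``cancel in pairs, exactly as in the $m=1$ case'' is not actually established, and pushing $T_k$ through $a^m+b^m$ head-on is genuinely messier than the $m=1$ case; the clean way to organize exactly that cancellation is the paper's trick of passing through $e_1=a+b$ and $e_2=ab$. Since your primary (Bernstein) argument is complete, this does not create a gap, but the fallback as written should not be regarded as a proof.
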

\begin{proof}
It is enough to show that $y_1^m+\cdots+y_n^m$ commutes with $\sigma_i$ for all $i$. Now $\sigma_i$ commutes with $y_j$ for $j\ne i, i+1$. So we just need to show that $\sigma_i $ commutes with $y_i^m+y_{i+1}^m$. 

This in turn follows once we prove that
\beqn \sigma_i(y_i +y_{i+1})&=&(y_i +y_{i+1})\sigma_i\\
\sigma_i(y_i y_{i+1})&=&(y_i y_{i+1})\sigma_i
\eeqn

Now \beqn \sigma_i(y_i +y_{i+1})&=&\sigma_i y_i +\sigma_i^2y_i\sigma_i = \sigma_i y_i +y_i\sigma_i
 +(s-s^{-1})\sigma_iy_i\sigma_i\\
 & =&y_i\sigma_i  +\sigma_iy_i\sigma_i^2 = (y_i +y_{i+1})\sigma_i, \\[2mm]
 \sigma_i(y_i y_{i+1})&=&\sigma_iy_i\sigma_iy_i\sigma_i =y_{i+1}y_i\sigma_i =(y_i y_{i+1})\sigma_i.
\eeqn

This completes the proof.
\end{proof}

So we have constructed elements $\tilde Q_\xx \in \daha_n$ with $Q_\xx=e_n \tilde Q_\xx e_n$, which are related even more directly to the elements $W_\xx$ in $\sk(T^2,*)$, in the following enhancement of theorem \ref{PWcomparison}.
\begin{theorem}  For every non-zero  $\xx\in\Z^2$ we have
\[(q^m-1)W_\xx =(s^m-s^{-m})\tilde Q_\xx,\] where $\xx=m\yy$ with $\yy$ primitive and $m>0$.
\end{theorem}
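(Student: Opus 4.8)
The plan is to follow the same strategy used in the proof of Theorem \ref{PWcomparison}, but to carry the un-symmetrized elements all the way through, exploiting the fact that both $W_\xx$ and $\tilde Q_\xx$ have been assembled from the single base case $\xx=(0,m)$ by applying the \emph{same} $\SL_2(\Z)$-action. First I would dispose of the base case. The third identity in \eqref{eq:Wasbraid} reads $(c^{-2m}-1)W_{(0,m)}=(s^m-s^{-m})\sum_i y_i^m$ in $\sk_n(T^2,*)$, and under the parameter identification $q=c^{-2}$ we have $c^{-2m}=q^m$, so
\[
(q^m-1)W_{(0,m)}=(s^m-s^{-m})\tilde Q_{0,m},
\]
where $W_{(0,m)}$ is read via $\varphi_n$ and $\tilde Q_{0,m}=\sum_i y_i^m$ via $F_n$. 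This is exactly the sought identity for $\xx=(0,m)$.

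Next I would propagate this identity along the mapping class group. A homeomorphism $\gamma$ of $T^2$ fixing the disc $D$ and the base string induces compatible $\Z[s^{\pm1},c^{\pm1},v^{\pm1}]$-algebra automorphisms of $\sk(T^2-D^2)$, of $\bsk_n(T^2,*)\cong\daha_n$, and of $\sk_n(T^2,*)$, and the structural maps $\varphi_n$ and $F_n$ are equivariant for them. As recorded in the proof of Theorem \ref{PWcomparison}, the Dehn twists $\tau_1,\tau_2$ act on the geometric elements $W_\xx$ by the corresponding $\SL_2(\Z)$ matrices on the homology index, while by construction $\tilde Q_\xx$ is defined to be $\gamma(\tilde Q_{0,m})$ for any $\gamma$ lifting an $M\in\SL_2(\Z)$ with $M(0,m)=\xx$. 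So, given $\xx$ with $d(\xx)=m$, I would pick such an $M$ and a lift $\gamma$ expressed as a word in $\tau_1,\tau_2$, and simply apply $\gamma$ to the boxed base-case identity. Since $\gamma$ is linear over the coefficient ring and fixes both the scalars $q^m,s^m$ and the integer $m=d(\xx)=d((0,m))$, transporting gives $(q^m-1)W_\xx=(s^m-s^{-m})\tilde Q_\xx$ at once.

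The one genuine point to check is that this is \emph{consistent}, since the lift $\gamma$ is not unique and I must know both sides are independent of the choice. For $\tilde Q_\xx$ this is precisely the content of the preceding Proposition and Lemma: the generator $\theta^4=(\tau_1\tau_2^{-1}\tau_1)^4$ of the kernel of $B_3\to\SL_2(\Z)$ fixes $\tilde Q_{0,m}$ by the Lemma, and $\tau_2$, which generates the $\SL_2(\Z)$-stabilizer of $(0,1)$, fixes $\tilde Q_{0,m}$ because $\tau_2(y_i)=y_i$; together these show the full preimage of the stabilizer acts trivially on $\tilde Q_{0,m}$, so $\tilde Q_\xx$ is well-defined. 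For $W_\xx$ the analogous statement is geometric and more transparent: $W_\xx$ is defined directly from an embedded curve, the boundary Dehn twist about $\partial D$ acts trivially once the curve is isotoped off the collar of $\partial D$, and $\tau_2$ preserves the vertical class, so the family $\{W_\xx\}$ is genuinely $\SL_2(\Z)$-equivariant. Because both families are well-defined, I am free to use a single common $\gamma$ for both sides.

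I expect the main obstacle — really the only subtlety — to be the bookkeeping of the three automorphism actions and their compatibility across $\varphi_n$ and $F_n$, together with confirming that the coefficient ring (in particular the parameter $q=c^{-2}$ and the degree $m$) is fixed by the Dehn twists, so that the scalar prefactors $q^m-1$ and $s^m-s^{-m}$ survive transport unchanged. Once well-definedness is in hand, the theorem reduces to a one-line application of the automorphism $\gamma$ to the already-established base-case identity.
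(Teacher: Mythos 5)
Your proposal is correct and follows essentially the same route as the paper: the base case $(q^m-1)W_{(0,m)}=(s^m-s^{-m})\tilde Q_{0,m}$ from \eqref{eq:Wasbraid}, transport by the Dehn-twist automorphisms $\tau_1,\tau_2$ acting compatibly on $\sk(T^2-D^2)$, $\bsk_n(T^2,*)\cong\daha_n$ and $\sk_n(T^2,*)$, and well-definedness of $\tilde Q_\xx$ via the Proposition and Lemma on $\theta^4=(\tau_1\tau_2^{-1}\tau_1)^4$. Your explicit check that $\tau_2$ (generating the stabilizer of $(0,m)$) also fixes $\tilde Q_{0,m}$ is a small point the paper leaves implicit, but it is the same argument.
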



\subsection{The punctured torus and elliptic Hall algebra}
In this subsection, we use the previous results in this section to show that Conjecture \ref{conj:mainiso} implies a weakened version of Conjecture \ref{conj:punctoeha}. Recall that $\ZZ^+ \subset \ZZ = \Z^2$ is defined by
\[
\ZZ^+ := \{(a,b) \in \Z^2 \mid a > 0\} \sqcup \{(0,b) \mid b \geq 0\}
\]

\begin{definition}\label{def:posskein}
Let $\sk^+(T^2\oursetminus D^2) $ be the subalgebra of $  \sk(T^2\oursetminus D^2)$  generated by $W_\xx$ for $\xx \in \ZZ^+ $.
\end{definition}

\begin{theorem}\label{thm:psktoeha}
	If Conjecture \ref{conj:mainiso} is true, then there is a surjective algebra map $\sk^+(T^2\oursetminus D^2) \twoheadrightarrow \E^+_{\sigma, \bar \sigma}$ sending $W_\xx$ to $(s^{d(\xx)} - s^{-d(\xx)}) u_\xx$.
\end{theorem}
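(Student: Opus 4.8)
The plan is to assemble the comparison results of this section into the projective-limit framework packaged in Corollary \ref{cor:limitmap}. Assuming Conjecture \ref{conj:mainiso}, the homomorphism $F_n$ of Theorem \ref{thm:dahatoskein} is an isomorphism $\sk_n(T^2,*) \cong \bsk_n(T^2,*) \cong \daha_n$, so I can compose the ``fill in the puncture'' map $\varphi_n\colon \sk(T^2\oursetminus D^2) \to \sk_n(T^2,*)$ with $F_n^{-1}$ and restrict to $\sk^+(T^2\oursetminus D^2)$, landing in $\daha_n$. By the final (unsymmetrized) comparison theorem of the previous subsection, this map sends $W_\xx$ to $\tfrac{s^m-s^{-m}}{q^m-1}\tilde Q_\xx$, where $\xx=m\yy$ with $\yy$ primitive and $m=d(\xx)$.

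The key step is to upgrade this into an algebra map to the spherical subalgebra $\SH^n_{q,t}=e_n\daha_n e_n$. For this I would use that every $\tilde Q_\xx$ lies in the centralizer $Z_{\daha_n}(H_n)$ of the finite Hecke algebra: indeed $\tilde Q_{0,m}=\sum_i y_i^m$ commutes with each $\sigma_i$ (this is exactly what the proof of the Lemma above establishes), and the $SL_2(\Z)$ automorphisms producing $\tilde Q_\xx$ from $\tilde Q_{0,m}$ fix all the $\sigma_i$, hence preserve this centralizer. Since $\sk^+(T^2\oursetminus D^2)$ is generated by the $W_\xx$, its whole image lies in $Z_{\daha_n}(H_n)$, and on this centralizer the sandwich $z\mapsto e_n z e_n$ is a \emph{unital algebra homomorphism} onto $\SH^n_{q,t}$, because $e_n\in H_n$ satisfies $e_n z=z e_n$ for centralizer elements $z$ and $e_n^2=e_n$. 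The resulting composite $\psi_n\colon\sk^+(T^2\oursetminus D^2)\to\SH^n_{q,t}$ then satisfies $\psi_n(W_\xx)=e_n W_\xx e_n=\tfrac{s^m-s^{-m}}{q^m-1}Q_\xx$ by Theorem \ref{PWcomparison}, and for $\xx\in\ZZ^+$ lands in $\SH^{n,+}_{q,t}$.

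Finally I would feed the family $\{\psi_n\}$ into Corollary \ref{cor:limitmap}. After extending scalars to invert $s^m-s^{-m}$ and rescaling the generators so that $a_\xx:=\tfrac{q^m-1}{s^m-s^{-m}}W_\xx\mapsto Q_\xx$ (the maps are automatically compatible with the transition maps $\Phi_n$, since $Q^n_\xx\mapsto Q^{n-1}_\xx$ on generators), the corollary — which encodes the Schiffmann--Vasserot limit isomorphism $\E^+_{q,t}\cong\SH^{\infty,+}_{q,t}$ of Theorem \ref{thm:SVlimit} — produces an algebra map $\sk^+(T^2\oursetminus D^2)\to\E^+_{q,t}$ with $a_\xx\mapsto(q^m-1)u_\xx$, that is, $W_\xx\mapsto(s^m-s^{-m})u_\xx=(s^{d(\xx)}-s^{-d(\xx)})u_\xx$, under the identifications $q=c^{-2}$, $s^2=t$, $\sigma=q^{-1}$, $\bar\sigma=t^{-1}$. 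Surjectivity is then immediate: the $u_\xx$ with $\xx\in\ZZ^+$ generate $\E^+_{\sigma,\bar\sigma}$, and each is the image of $W_\xx$ up to the nonzero scalar $s^{d(\xx)}-s^{-d(\xx)}$.

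The main obstacle, and the only place carrying genuine content, is the homomorphism property of $\psi_n$: it is not enough to know the value of $e_n W_\xx e_n$ from Theorem \ref{PWcomparison}, since $z\mapsto e_n z e_n$ is not multiplicative in general; one must know that the \emph{unsymmetrized} elements $\tilde Q_\xx$ all commute with $H_n$, which is what makes the sandwich multiplicative on the subalgebra generated by the $W_\xx$. Everything after that is scalar bookkeeping together with an appeal to the deep limit theorem of Schiffmann and Vasserot already recorded in Corollary \ref{cor:limitmap}.
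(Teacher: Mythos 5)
Your proof is correct and follows essentially the same route as the paper: invert $F_n$ (granted Conjecture \ref{conj:mainiso}), compose with $\varphi_n$, identify the images of the $W_\xx$ with multiples of $Q_\xx$ via Theorem \ref{PWcomparison}, and feed the resulting family of maps into Corollary \ref{cor:limitmap}, with surjectivity coming from the generating set of $\E^+_{\sigma,\bar\sigma}$. The one point where you go beyond the paper's terser write-up --- checking that the symmetrized maps $\psi_n$ are actually algebra homomorphisms because the unsymmetrized elements $\tilde Q_\xx$ lie in the centralizer $Z_{\daha_n}(H_n)$, so that $z\mapsto e_n z e_n$ is multiplicative on the relevant subalgebra --- is a real subtlety that the paper's proof leaves implicit, and your centralizer argument (via the Lemma on $\sum_i y_i^m$ and the fact that $\tau_1,\tau_2$ fix the $\sigma_i$) resolves it correctly.
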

\begin{proof}
	By Conjecture \ref{conj:mainiso}, the map $F_n:\ddot H_n \to \sk_n(T^2,*)$ is an isomorphism, and we can compose its inverse with the natural map \[\varphi_n:\sk(T^2\oursetminus D^2) \to \sk_n(T^2,*)\] to obtain a map $\sk(T^2 \oursetminus D^2) \to \ddot H_n$. By Theorem \ref{PWcomparison}, this map satisfies the following equation: 
	\begin{equation*}
	W_\xx \mapsto \frac{s^{d(\xx)} - s^{-d(\xx)}}{q^{d(\xx)} - 1} Q_\xx
	\end{equation*}
	By Corollary  \ref{cor:limitmap}, this proves the existence of the algebra map stated in the theorem, and surjectivity follows immediately from the definition of the subalgebra $\E_{\sigma, \bar \sigma}^+$.
\end{proof}

The simplest relations between the $W_\xx$ are easy to check in the elliptic Hall algebra independently of Conjecture \ref{conj:mainiso} (see Remark \ref{rmk:ptorusrel}). However, describing all relations between the $W_\xx$ in the punctured torus is an open problem.

\begin{remark}
	It would be desirable to extend the map in Theorem \ref{thm:psktoeha} to a much larger subalgebra of $\sk(T^2-D^2)$, instead of the subalgebra $\sk^+(T^2 - D^2)$ generated by $W_\xx$ for $\xx \in \ZZ^+$. 
	It seems that the main difficulty is showing compatibility between the Schiffmann-Vasserot projections between spherical DAHAs and the maps from $\sk(T^2-D^2)$ to the spherical DAHAs.
	Ideally this would follow from a topological interpretation of the Schiffmann-Vasserot projections as some kind of partial trace, but it isn't clear if such an interpretation exists. We do note that we have defined algebra maps  from the entire skein algebra $\sk(T^2 \oursetminus D^2)$ to the DAHAs (and not just the positive subalgebra). The technical difficulty here is that the Schiffmann-Vasserot projections between spherical DAHAs of different ranks are only defined on the ``positive subalgebras.''
\end{remark}


\bibliography{bibtex_dahamodels}{}

\providecommand{\bysame}{\leavevmode\hbox to3em{\hrulefill}\thinspace}
\providecommand{\MR}{\relax\ifhmode\unskip\space\fi MR }
\providecommand{\MRhref}[2]{%
  \href{http://www.ams.org/mathscinet-getitem?mr=#1}{#2}
}
\providecommand{\href}[2]{#2}
\begin{thebibliography}{BWPV14}

\bibitem[AM98]{AM98}
A.~K. Aiston and H.~R. Morton, \emph{Idempotents of {H}ecke algebras of type
  {$A$}}, J. Knot Theory Ramifications \textbf{7} (1998), no.~4, 463--487.
  \MR{1633027 (99h:57002)}

\bibitem[AS15]{AS15}
Mina Aganagic and Shamil Shakirov, \emph{Knot homology and refined
  {C}hern-{S}imons index}, Comm. Math. Phys. \textbf{333} (2015), no.~1,
  187--228. \MR{3294947}

\bibitem[Bel04]{B04}
Paolo Bellingeri, \emph{On presentations of surface braid groups}, J. Algebra
  \textbf{274} (2004), no.~2, 543--563. \MR{2043362}

\bibitem[BFKB99]{BFK99}
Doug Bullock, Charles Frohman, and Joanna Kania-Bartoszy{\'n}ska,
  \emph{Understanding the {K}auffman bracket skein module}, J. Knot Theory
  Ramifications \textbf{8} (1999), no.~3, 265--277. \MR{1691437 (2000d:57012)}

\bibitem[BP00]{BP00}
Doug Bullock and J{\'o}zef~H. Przytycki, \emph{Multiplicative structure of
  {K}auffman bracket skein module quantizations}, Proc. Amer. Math. Soc.
  \textbf{128} (2000), no.~3, 923--931. \MR{1625701 (2000e:57007)}

\bibitem[BS12]{BS12}
Igor Burban and Olivier Schiffmann, \emph{On the {H}all algebra of an elliptic
  curve, {I}}, Duke Math. J. \textbf{161} (2012), no.~7, 1171--1231.
  \MR{2922373}

\bibitem[BWPV14]{BWPV14}
Glen Burella, Paul Watts, Vincent Pasquier, and Ji\v{r}\'{\i} Vala,
  \emph{Graphical calculus for the double affine {$Q$}-dependent braid group},
  Ann. Henri Poincar\'{e} \textbf{15} (2014), no.~11, 2177--2201. \MR{3268827}

\bibitem[Che05]{Che05}
Ivan Cherednik, \emph{Double affine {H}ecke algebras}, London Mathematical
  Society Lecture Note Series, vol. 319, Cambridge University Press, Cambridge,
  2005. \MR{2133033 (2007e:32012)}

\bibitem[CM18]{CM18}
Erik Carlsson and Anton Mellit, \emph{A proof of the shuffle conjecture}, J.
  Amer. Math. Soc. \textbf{31} (2018), no.~3, 661--697. \MR{3787405}

\bibitem[FG00]{FG00}
Charles Frohman and R{\u{a}}zvan Gelca, \emph{Skein modules and the
  noncommutative torus}, Trans. Amer. Math. Soc. \textbf{352} (2000), no.~10,
  4877--4888. \MR{MR1675190 (2001b:57014)}

\bibitem[JV17]{JV17}
David {Jordan} and Monica {Vazirani}, \emph{{The rectangular representation of
  the double affine Hecke algebra via elliptic Schur-Weyl duality}}, arXiv
  e-prints (2017), arXiv:1708.06024.

\bibitem[Koo08]{Koo08}
Tom~H. Koornwinder, \emph{Zhedanov's algebra {$\rm AW(3)$} and the double
  affine {H}ecke algebra in the rank one case. {II}. {T}he spherical
  subalgebra}, SIGMA Symmetry Integrability Geom. Methods Appl. \textbf{4}
  (2008), Paper 052, 17. \MR{2425640 (2010e:33028)}

\bibitem[Luk05]{Luk05}
Sascha~G. Lukac, \emph{Idempotents of the {H}ecke algebra become {S}chur
  functions in the skein of the annulus}, Math. Proc. Cambridge Philos. Soc.
  \textbf{138} (2005), no.~1, 79--96. \MR{2127229 (2005m:20018)}

\bibitem[Mor02]{Mor02b}
Hugh~R. Morton, \emph{Skein theory and the {M}urphy operators}, J. Knot Theory
  Ramifications \textbf{11} (2002), no.~4, 475--492, Knots 2000 Korea, Vol. 2
  (Yongpyong). \MR{1915490 (2003f:20013)}

\bibitem[MS17]{MS17}
Hugh Morton and Peter Samuelson, \emph{The {HOMFLYPT} skein algebra of the
  torus and the elliptic {H}all algebra}, Duke Math. J. \textbf{166} (2017),
  no.~5, 801--854. \MR{3626565}

\bibitem[MT90]{MT90}
H.~R. Morton and P.~Traczyk, \emph{Knots and algebras}, In `Contribuciones
  Matematicas en homenaje al profesor D. Antonio Plans Sanz de Bremond,' ed. E.
  Martin-Peinador and A. Rodez Usan, University of Zaragoza, 201-220., 1990.

\bibitem[Ni11]{Ni11}
Yi~Ni, \emph{Dehn surgeries on knots in product manifolds}, Journal of Topology
  \textbf{4} (2011), no.~4, 799--816.

\bibitem[PS00]{PS00}
J{\'o}zef~H. Przytycki and Adam~S. Sikora, \emph{On skein algebras and {${\rm
  Sl}\sb 2({\bf C})$}-character varieties}, Topology \textbf{39} (2000), no.~1,
  115--148. \MR{1710996 (2000g:57026)}

\bibitem[Sim]{Simmental}
Jose Simental, \emph{Notes from lecture two: double affine {H}ecke algebras}.

\bibitem[SV11]{SV11}
O.~Schiffmann and E.~Vasserot, \emph{The elliptic {H}all algebra, {C}herednik
  {H}ecke algebras and {M}acdonald polynomials}, Compos. Math. \textbf{147}
  (2011), no.~1, 188--234. \MR{2771130 (2012f:20008)}

\bibitem[SV13]{SV13}
Olivier Schiffmann and Eric Vasserot, \emph{The elliptic {H}all algebra and the
  {$K$}-theory of the {H}ilbert scheme of {$\mathbb A^2$}}, Duke Math. J.
  \textbf{162} (2013), no.~2, 279--366. \MR{3018956}

\bibitem[Ter13]{Ter13}
Paul Terwilliger, \emph{The universal {A}skey-{W}ilson algebra and {DAHA} of
  type {$(C\sp \vee\sb 1,C\sb 1)$}}, SIGMA Symmetry Integrability Geom. Methods
  Appl. \textbf{9} (2013), Paper 047, 40. \MR{3116183}

\bibitem[Tur97]{Turaev}
Vladimir~G. Turaev, \emph{The {C}onway and {K}auffman modules of the solid
  torus with an appendix on the operator invariants of tangles}, Progress in
  knot theory and related topics, Travaux en Cours, vol.~56, Hermann, Paris,
  1997, pp.~90--102. \MR{1603138}

\end{thebibliography}
\bibliographystyle{amsalpha}

 \end{document}